\newcommand{\C}{{\mathbb C}}
\newcommand{\R} {\mathbb R}
\newcommand{\normadora}{\ltimes}
\newtheorem{theorem}{Theorem}[section]
\newtheorem{lemma}[theorem]{Lemma}
\newtheorem{prop}[theorem]{Proposition}
\numberwithin{equation}{section}
\title[Multispike solutions for the Brezis-Nirenberg problem in 3D]{Multispike solutions for the Brezis-Nirenberg problem in dimension three}
\author{Monica Musso}
\address{M. Musso - Departamento de Matem\'atica, Pontificia Universidad Cat\'olica de Chile, Avda. Vicu\~na Mackenna 4860, Macul, Chile.}
\email{mmusso@mat.puc.cl}
\author{Dora Salazar}
\address{D. Salazar -
Escuela de Matem\'aticas, Universidad Nacional de Colombia Sede Medell\'in, Apartado A\'ereo 3840, Medell\'in, Colombia.}
\email{dcsalazarl@unal.edu.co}
\begin{document}

\begin{abstract}
We consider the problem  $\Delta u + \lambda u +u^5 = 0$, $u>0$, in a smooth bounded domain $\Omega$ in $\R^3$, under zero Dirichlet boundary conditions.
We obtain solutions to this problem exhibiting multiple bubbling behavior at $k$ different points of the domain as $\lambda$ tends to a special positive value $\lambda_0$, which we characterize in terms of the Green function of $-\Delta - \lambda$.
\end{abstract}

\maketitle

\section{Introduction}
\noindent
Let us consider the Brezis-Nirenberg  problem
\begin{align*}
(\wp_\lambda)\,
\left\{
\begin{aligned}
& \Delta u +\lambda u + u^p  =0
\quad\text{in } \Omega,
\\
& u>0 \quad\text{in } \Omega,
\\
& u=0 \quad\text{on } \partial\Omega,
\end{aligned}
\right.
\end{align*}
where $\Omega$  is a smooth bounded domain in $\R^N$, $N\geq 3$, $p= \frac{N+2}{N-2}$ and
 $\lambda$ is a real positive parameter.

In this article, we are interested in obtaining solutions to this problem, in the special case $N=3$, that concentrate in $k$ different points of $\Omega$, $k\geq 2$.
In particular, we
analyze the role of the Green function of $\Delta+\lambda$ in the presence of multi-peak solutions when $\lambda$ is regarded as a parameter.

Solutions to $(\wp_\lambda)$ correspond to critical points of the energy functional
\[
J_\lambda(u)=\frac{1}{2}\int_{\Omega}\vert\nabla u\vert^2-\frac{\lambda}{2}\int_{\Omega}u^2-\frac{1}{p+1}\int_{\Omega}|u|^{p+1}.
\]
Although this functional is of class $C^2$ in $H_0^1(\Omega)$, it does not satisfy the Palais-Smale condition at all energy levels, and hence variational arguments to find solutions are delicate and sometimes fail.

Let $\lambda_1$ denote the first eigenvalue of $-\Delta$ with Dirichlet boundary condition.
It is well known that
$(\wp_\lambda)$ admits no solutions if
$\lambda \geq\lambda_1$, which can be verified by
testing the equation against a first eigenfunction of the Laplacian. Moreover, the classical Pohozaev identity \cite{pohozaev} guarantees that problem $(\wp_\lambda)$ with $\lambda\leq 0$ has no solution in a starshaped domain.

In the classical paper \cite{brezis-nirenberg}, Brezis and Nirenberg showed that
least energy
solutions
to this problem exist for $\lambda \in (\lambda^*,\lambda_1)$, where $\lambda^* \in [0,\lambda_1)$ is a special number depending on the domain.
They also showed that if $N\geq 4$, then $\lambda^*=0$ and in particular $(\wp_\lambda)$ has a solution with minimal energy for all $\lambda \in (0,\lambda_1)$.

When $N=3$ the situation is strikingly different, since, as it is shown in \cite{brezis-nirenberg}, $\lambda^*>0$ and no solutions with minimal energy exist
when  $\lambda \in (0,\lambda^*)$.
In 2002, Druet \cite{druet} showed that there is no solution with minimal energy neither for $\lambda=\lambda^*$,
which implies that $\lambda^*$ can be characterized as the critical value such that a solution of $(\wp_\lambda)$ with minimal energy exists if and only if $\lambda\in (\lambda^*,\lambda_1)$.

In the particular case of the ball in $\R^3$, Brezis and Nirenberg \cite{brezis-nirenberg} also proved that $\lambda^* = \frac{\lambda_1}{4}$ and that a solution to $(\wp_\lambda)$ exists if and only if $\lambda \in (  \frac{\lambda_1}{4},  \lambda_1)$. By the results of Gidas, Ni, Nirenberg \cite{gidas-ni-nirenberg} and  Adimurthi, Yadava \cite{adimurthi-yadava} this solution is unique and corresponds indeed to the minimum of the energy functional.

In dimensions three  a characterization of $\lambda^*$ can be given in terms of the \emph{Robin function $g_\lambda$} defined as follows.
Let $\lambda\in(0,\lambda_1)$. For a given $x\in\Omega$ consider the  Green function $G_\lambda(x, y)$, solution of
\[
\begin{array}{rlll}
-\Delta_yG_\lambda-\lambda G_\lambda&=&\delta_x&y\in \Omega,
\\
G_\lambda(x, y)&=&0&y\in \partial\Omega,
\end{array}
\]
where $\delta_x$ is the Dirac delta at $x$.
Let $H_{\lambda}(x,y) = \Gamma(y-x)-G_\lambda(x,y)$ with $\Gamma(z)=\frac{1}{4\pi \vert z\vert}$, be its regular part,
and let us define the Robin function of $G_\lambda$ as
$g_\lambda(x) := H_\lambda(x, x)$.

It is known that $g_\lambda(x)$ is a smooth function which goes to $+\infty$ as $x$ approaches $\partial \Omega$. The minimum of $g_\lambda$ in $\Omega$ is strictly decreasing in $\lambda$, is strictly positive when $\lambda$ is close to $0$ and approaches $-\infty$ as $\lambda\uparrow \lambda_1$.

It was conjectured in \cite{brezis}  and proved by Druet \cite{druet} that $\lambda^*$ is the largest $\lambda \in (0,\lambda_1)$ such that $\min_{\Omega} g_\lambda \geq 0$. Moreover, Druet also proved that, as $\lambda\downarrow \lambda^*$, least energy solutions to $(\wp_\lambda)$ develop a singularity which is located at a point $\zeta_0\in \Omega$ such that $g_{\lambda^*}(\zeta_0) = 0$.
Note that $\zeta_0$ is  a global minimizer of  $g_{\lambda^*}$ and hence a critical point.
A concentrating family of solutions can exist at other values of  $\lambda$.
Indeed,   del Pino, Dolbeault and Musso \cite{DPDM}
proved that if $\lambda_0 \in (0,\lambda_1)$ and $\zeta_0\in\Omega$  are such that
\[
g_{\lambda_0}(\zeta_0)=0, \quad
\nabla g_{\lambda_0}(\zeta_0)=0,
\]
and either $\zeta^0$ is a strict local minimum or a nondegenerate critical point of $g_{\lambda}$, then for $\lambda - \lambda_0 >0$,  there is a solution $u_\lambda$ of $(\wp_\lambda)$ such that
\[
u_\lambda (x) = w_{\mu,\zeta}\,(1+o(1))
\]
in $\Omega$ as $\lambda - \lambda_0 \to 0$, where
\[
w_{\mu,\zeta}(x) =
\frac{\alpha_3 \, \mu^{1/2}}{(\mu^2+ | x-\zeta|^2 )^{1/2}},
\quad \alpha_3 = 3^{1/4},
\]
$\zeta\to \zeta_0$ and $\mu=O(\lambda - \lambda_0)$.

The behavior described above, namely \emph{bubbling} of a family of solutions, was
already studied in higher dimensions.
Han \cite{han} proved  that if $N\geq 4 $,  minimal energy solution of $(\wp_\lambda)$ concentrate at a critical point of the Robin function $g_0$ as $\lambda\downarrow0$. See also Rey \cite{Rey}  for an arbitrary family of solutions that concentrates at a single point. Conversely, Rey in \cite{Rey,Rey2} showed that attached to any $C^1$-stable critical point of the Robin function $g_0$ there is a family of solutions of $(\wp_\lambda)$ that blows up at this point as $\lambda\downarrow 0$.

Unlike  the case of dimension three, bubbling behavior  with concentration at multiple points as $\lambda \downarrow 0$ is known in higher dimensions.
Indeed, Musso and Pistoia \cite{Musso-Pistoia2002} constructed  multispike solutions in a smooth bounded domain $\Omega \subset \R^N$, $N\geq 5$.
To state precisely their result let us consider an integer $k \geq 1$,
let us write $\bar\mu = (\bar\mu_1,\ldots,\bar\mu_k)\in \R^k $, $\zeta = (\zeta_1,\ldots,\zeta_k) \in \Omega^k$, $\zeta_i\not=\zeta_j$ for $i\not=j$, and define
\[
\psi_k(\bar\mu,\zeta) = \frac{1}{2} ( M(\zeta) \,\bar\mu^{\frac{N-2}{2}} , \bar\mu^{\frac{N-2}{2}} )  - \frac{1}{2}
\,B\sum_{i=1}^k\bar\mu_i^2
\]
where $\bar\mu^{\frac{N-2}{2}} = (\bar\mu_1^{\frac{N-2}{2}},\ldots,\bar\mu_k^{\frac{N-2}{2}})$, and $M(\zeta)$ is the matrix with coefficients
\[
m_{ii}(\zeta) = g_0(\zeta_i), \quad
m_{ij}(\zeta) = -G_0(\zeta_i,\zeta_j), \quad \text{for } i\not=j.
\]
Here $B>0$ is a constant depending only on the dimension.
It is shown in  \cite{Musso-Pistoia2002}  that if $\psi_k$ has a stable critical point $(\bar\mu,\zeta)$ then, for $\lambda>0$ small, problem $(\wp_\lambda)$ has a family of solutions that blow up at the $k$ points $\zeta_1,\ldots,\zeta_k$, with profile near $\zeta_i$ given by  $w_{\mu_i,\zeta_i}$ and rates $\mu_i \sim \bar\mu_i \,\lambda^{\frac{1}{N-4}}$. Musso and Pistoia also exhibit classes of domains where such critical points of $\psi_k$ can be found.
A related multiplicity result is given by the same authors in \cite{Musso-Pistoia2003}, where $\Omega$ is a domain with a sufficiently small hole.
They show that for $\lambda<0$ small there is a family of solutions concentrating at two points.


As far as we know, there are no works dealing with solutions with multiple concentration in lower dimensions ($N=3$ and $N=4$), and it is not clear what type of finite dimensional function governs the location and the  concentration rate of the bubbling solutions.

In this work we focus in dimension three. We give conditions on the parameter $\lambda$ such that solutions with simultaneous concentration at $k$ points exist and find the finite dimensional function describing the location and rate of concentration.
We remark that the condition on $\lambda$ that we obtain for solutions with multiple bubbling in dimension three is a non-obvious but natural generalization of the condition given by  Dolbeault, del Pino and Musso  \cite{{DPDM}} for single bubble solutions in dimension three,
and is  somehow related to the result of Musso and Pistoia \cite{Musso-Pistoia2002} for $\lambda^*=0$ in higher dimensions.

In order to state our results we need some notation. For a given integer
$k\geq2$
set
\[
\Omega_k^* =\{ \zeta= ( \zeta_1,\dots, \zeta_k) \in \Omega^k : \zeta_i\not=\zeta_j \text{ for all } i\not=j\}.
\]
For $\zeta= ( \zeta_1,\dots, \zeta_k) \in \Omega_k^*$, let us consider the matrix
\[
M_\lambda(\zeta):=
\begin{pmatrix}
g_{\lambda}(\zeta_1) & -G_{\lambda}(\zeta_1,\zeta_2) & \ldots &  -G_{\lambda}(\zeta_1,\zeta_k) \\
-G_{\lambda}(\zeta_1,\zeta_2) & g_{\lambda}(\zeta_2) & \ldots & -G_\lambda(\zeta_2,\zeta_k)
\\
\vdots & & & \vdots
\\
-G_\lambda(\zeta_1,\zeta_k) & -G_\lambda(\zeta_2,\zeta_k) & \ldots & g_\lambda(\zeta_k)
\end{pmatrix}.
\]
In other words,  $M_\lambda(\zeta)$ is the matrix whose
$ij$ component is given by
\[
\begin{cases}
g_\lambda(\zeta_i) & \text{if } i=j
\\
- G_\lambda(\zeta_i,\zeta_j) & \text{if } i\not=j .
\end{cases}
\]
Define the function
\begin{align*}
\psi_\lambda(\zeta) = \det M_\lambda(\zeta)  , \quad \zeta \in \Omega_k^*.
\end{align*}
Our main result  is the following.
\begin{theorem}
\label{thm1}
Assume that for a number $\lambda=\lambda_0 \in (0,\lambda_1)$ there is \,$\zeta^0 = ( \zeta_1^0,\ldots,\zeta_k^0) \in\Omega_k^*$ such that:
\begin{itemize}

\item[(i)]
$\psi_{\lambda_0}(\zeta^0)=0$  and $M_{\lambda_0}(\zeta^0)$ is positive semidefinite,

\item[(ii)]
$ D_{\zeta}\psi_{\lambda_0}(\zeta^0)=0 $,

\item[(iii)]
$D_{\zeta\zeta} ^2\psi_{\lambda_0}(\zeta^0)$ is non-singular,

\item[(iv)]
$\frac{\partial  \psi_{\lambda} }{\partial \lambda}\big|_{\lambda=\lambda_0} (\zeta^0) < 0 $.

\end{itemize}
Then for $\lambda=\lambda_0+\varepsilon$, with $\varepsilon>0$ small, problem $(\wp_\lambda)$ has a solution $u$ of the form
\[
u = \sum_{j=1}^k w_{\mu_j,\zeta_j} +  O(\varepsilon^{\frac{1}{2}})
\]
where   $\mu_j = O(\varepsilon)$, $\zeta_j\to \zeta_j^0$, $j=1,\ldots,k$, and $O(\varepsilon^{\frac{1}{2}})$ is uniform in $\Omega$ as $\varepsilon\to 0$.
\end{theorem}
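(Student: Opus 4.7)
My approach is a Lyapunov--Schmidt reduction adapted to the multi-bubble situation in dimension three. I would start with the ansatz
$$V_{\mu,\zeta}(x)=\sum_{j=1}^{k}PW_{\mu_j,\zeta_j}(x),$$
where $PW_{\mu,\zeta}\in H_0^1(\Omega)$ is the projection of the bubble $w_{\mu,\zeta}$ defined by $-\Delta PW-\lambda PW=w_{\mu,\zeta}^5$ in $\Omega$ with zero boundary datum. With this choice, $w_{\mu,\zeta}-PW_{\mu,\zeta}=4\pi\alpha_3\mu^{1/2}H_\lambda(\cdot,\zeta)+O(\mu^{3/2})$, so interactions between bubbles are mediated by $G_\lambda$ from the outset.

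Writing $u=V+\phi$, the equation becomes $L\phi=E+N(\phi)$, where $L\phi=-\Delta\phi-\lambda\phi-5V^4\phi$, $E$ is the error committed by $V$, and $N(\phi)$ is the nonlinear part. The operator $L$ has an approximate kernel of dimension $4k$ spanned by the projected $\mu_j$- and $\zeta_j^l$-derivatives of the bubbles ($l=1,2,3$). Using the classical non-degeneracy of the linearization of $-\Delta w=w^5$ in $\R^3$ around the Aubin--Talenti bubble, $L$ is uniformly invertible on the orthogonal complement of this kernel, and a contraction mapping argument then yields a unique $\phi=\phi(\mu,\zeta)$ of size $O(\varepsilon^{1/2})$ in a suitable weighted norm solving the projected equation.

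The original problem then reduces to finding critical points of $F(\mu,\zeta):=J_\lambda(V_{\mu,\zeta}+\phi(\mu,\zeta))$. A careful expansion, combining the projection formula above with the asymptotics of $\int w^{p+1}$, $\int w^2$, and the cross interactions $\int w_i^5\, PW_j$, produces
$$F(\mu,\zeta,\lambda)=kI_0+c_1\,\bar\mu^{T}M_\lambda(\zeta)\,\bar\mu+R(\mu,\zeta,\lambda),$$
with $\bar\mu_j=\mu_j^{1/2}$, $I_0$ and $c_1>0$ explicit constants, and $R$ a higher-order remainder (including the contribution from $\phi$). The matrix $M_\lambda(\zeta)$ is exactly the one in the hypotheses, so $\psi_\lambda=\det M_\lambda$ controls the degeneracy of this quadratic form.

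To find critical points of $F$, I would rescale $\mu_j=\varepsilon\Lambda_j$ and seek zeros of $\nabla_{(\Lambda,\zeta)}F$. At $\varepsilon=0$, the leading term $\bar\mu^{T}M_{\lambda_0}(\zeta)\bar\mu$ is positive semi-definite with a one-dimensional kernel at $\zeta=\zeta^0$ by (i), spanned by a vector $\bar\mu^\star$ with positive components, and (ii) makes $\zeta^0$ a critical point in $\zeta$. The $\varepsilon$-perturbation breaks the kernel-direction degeneracy: condition (iv) (together with the positivity of the remaining eigenvalues of $M_{\lambda_0}(\zeta^0)$) gives the correct sign of the correction along $\bar\mu^\star$, pinning down the scale $\Lambda^\star$, while (iii) delivers non-degeneracy in the $\zeta$ directions. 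An implicit function argument (or Brouwer degree) then produces a critical point $(\Lambda^\varepsilon,\zeta^\varepsilon)\to(\Lambda^\star,\zeta^0)$ for small $\varepsilon>0$. The hardest step is this last one: isolating the clean matrix $M_\lambda$ structure in the reduced energy so that (i)--(iv) become the natural hypotheses, and then handling the intrinsically degenerate finite-dimensional problem, where the $\varepsilon$-perturbation must precisely balance the vanishing of $\psi_{\lambda_0}(\zeta^0)$ along the kernel direction.
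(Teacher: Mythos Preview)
Your outline follows the paper's Lyapunov--Schmidt scheme and correctly identifies $M_\lambda(\zeta)$ as the matrix governing the quadratic part of the reduced energy. There are, however, two genuine gaps in the reduced step, and they are precisely the places where dimension three differs from $N\ge 5$.

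\medskip
\textbf{The scale along the kernel direction is not fixed by (iv).}
Your expansion stops at $c_1\,\bar\mu^{T}M_\lambda(\zeta)\,\bar\mu$ and calls everything else a ``higher-order remainder''. But the quadratic form is degenerate along the kernel eigenvector $\bar\mu^\star$ of $M_{\lambda_0}(\zeta^0)$; for $\lambda=\lambda_0+\varepsilon$ hypothesis (iv) only tells you that the smallest eigenvalue becomes $\sigma_1\sim -c\varepsilon$, so along the kernel the quadratic term is $\sim -c\varepsilon\,t^2$ (writing $\bar\mu=t\bar\mu^\star$). This has no critical point in $t>0$. What actually pins down the scale is the next term in the expansion,
\[
a_2\,\lambda\sum_{i}\mu_i^{2}=a_2\,\lambda\sum_{i}\bar\mu_i^{4},
\]
which is quartic in $\bar\mu$ and strictly positive; balancing $-c\varepsilon\,t^2$ against $a_2\lambda_0\big(\sum_i(\bar\mu^\star_i)^4\big)t^4$ gives $t^2\sim\varepsilon$, i.e.\ $\mu_j\sim\varepsilon$. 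In the paper this is exactly the mechanism behind the explicit value $\bar\Lambda_1^0$ and the nonzero derivative $4a_1$ at that point. Without isolating this quartic term (and the companion $a_3$ term), the statement that ``(iv) pins down the scale $\Lambda^\star$'' is unjustified.

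\medskip
\textbf{The crude bound $\phi=O(\varepsilon^{1/2})$ is not enough.}
In the rescaled variables the naive estimate gives $\|\phi\|_*\le C\varepsilon$, hence the $\phi$-contribution to the reduced energy is $O(\varepsilon^2)$, which is \emph{the same order} as the two competing terms above. Its $\bar\Lambda_1$-derivative, after dividing by $\sigma_1^2\sim\varepsilon^2$ as one must in the kernel equation, would be $O(1)$ and swamp the main term. The paper resolves this by proving the refined bound
\[
\|E\|_{**}+\|\phi\|_*\le C\bigl(\varepsilon^{1/2}\,|M_\lambda(\zeta)\mu^{1/2}|+\varepsilon^2\bigr),
\]
and then restricting to the region where $\mu^{1/2}$ is nearly aligned with $\ker M_{\lambda_0}(\zeta^0)$ and $|\zeta-\zeta^0|\le\varepsilon^{1-\sigma}$, so that $|M_\lambda(\zeta)\mu^{1/2}|=O(\varepsilon^{3/2-\sigma})$ and $\|\phi\|_*=O(\varepsilon^{2-\sigma})$. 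Only then does the remainder drop below the balance and a degree argument closes. This refinement is the heart of the three-dimensional case and is missing from your sketch.
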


\medskip	

\noindent
We remark that
Theorem~\ref{thm1} admits some variants. For example, if $\frac{\partial  \psi_{\lambda} }{\partial \lambda}\big|_{\lambda=\lambda_0} (\zeta^0)  > 0 $, then a solution with $k$ bubbles can be found for  $\lambda=\lambda_0-\varepsilon$, with $\varepsilon>0$ small.
When $k=2$ the assumption that $M_{\lambda_0}(\zeta^0)$ is positive semidefinite is equivalent to $g_{\lambda_0}(\zeta_1^0)>0$ or  $g_{\lambda_0}(\zeta_2^0)>0$.

As an example where the previous theorem can be applied,
let us consider the annulus
\[
\Omega_a  = \{ x\in \R^3 \ : \  a < |x| < 1 \} ,
\]
where $0<a<1$. From the work of Kazdan and Warner \cite{kazdan-warner} it is known that for any $\lambda<\lambda_1$ there is a radial positive solution in $\Omega_a$.

For each $k\geq 2$, we prove that there exists $0<a_k<1$ such that if $a \in (a_k,1)$, then problem $(\wp_{\lambda_0+\varepsilon})$  in $\Omega_a$, $\varepsilon>0$ small, has a solution with $k$ bubbles centered at the vertices of a planar regular polygon for some $\lambda_0\in (0,\lambda_1)$. As a byproduct of the construction we also deduce that
\[
\lambda_0<\lambda^*.
\]
A detailed proof of these assertions is given in Section~\ref{exampleAnnulus}.
The ideas developed here can be applied
to obtain two bubble solutions in more general thin axially symmetric domains.

In dimension $N\geq 4$ qualitative similar solutions were detected by Wang-Willem \cite{wang-willem} for all $\lambda $ in an interval almost equal to $ (0,\lambda_1)$ by using variational methods.
The existence of this kind of solutions in dimension three was (to the best of our knowledge) not known.

We should remark that  multipeak solutions cannot be constructed in a ball, since the solution of $(\wp_\lambda)$ is radial and unique if it exists. This may indicate that if we consider  $(\wp_\lambda)$ in the annulus $\Omega_a$ with $a>0$ sufficiently small there are no multipeak solutions.

Finally, we mention that several interesting results have been obtained on the existence of sign changing solutions to the  Brezis-Nirenberg problem. See for instance Ben Ayed, El Mehdi, Pacella \cite{benayed-elmehdi-pacella},  Iacopetti \cite{iacopetti}, Iacopetti and Vaira \cite{iacopetti-vaira} and the references therein. It is in fact foreseeable that the methods developed in this work can also give the existence of multipeak sign changing solutions in dimension 3.

The paper is organized as follows. In Section~\ref{sectEnergyExpansion} we introduce some notation and give the energy expansion for a multi-bubble approximation.
Sections~\ref{sectLinear} and ~\ref{sectNonlinear} are respectively devoted to the study the linear and nonlinear problems involved in the Lyapunov Schmitd reduction, which is carried out in Section~\ref{secReduction}.
Theorem~\ref{thm1} is proved in Section~\ref{sectProof}.
Finally, in Section~\ref{exampleAnnulus} we give the details for the case of the annulus $\Omega_a$.

\section{Energy expansion of a multi-bubble approximation}
\label{sectEnergyExpansion}

\noindent
We denote by
\[
U(z):=\frac{\alpha_3 }{(1+\vert z\vert^2)^{1/2}},\quad \alpha_3=3^{1/4},
\]
the standard bubble.
It is well known that all positive solutions to the Yamabe equation
\[
\Delta w + w^5 = 0 \quad\text{in }\mathbb{R}^3
\]
are of the form
\begin{align*}
w_{\mu,\zeta}(x):
&=\mu^{-1/2}\,U\Bigl(\frac{x-\zeta}{\mu}
\Bigr)
=\frac{\alpha_3 \, \mu^{1/2}}{\Bigl(\mu^2+\vert x-\zeta\vert^2\Bigr)^{1/2}},
\end{align*}
where $\zeta$ is a point in $\mathbb{R}^3$ and $\mu$ is a positive number.

From now on we assume that $0<\lambda<\lambda_1(\Omega)$.

For a given $k\geq 2$,
we consider $k$ different points $\zeta_1,\ldots,\zeta_k\in\Omega$ and
small positive numbers $\mu_1,\ldots, \mu_k$ and denote by
\[
w_i:=w_{\mu_i,\zeta_i}.
\]
We are looking for solutions of $(\wp_\lambda)$ that at main order are given by $\sum_{i=1}^k w_i$. Since $w_i$ are not zero on $\partial\Omega$ it is natural to correct this approximation by terms that provide the Dirichlet boundary condition.
In order to do this we introduce, for each $i=1,\ldots,k$, the function
$\pi_i$ defined as  the unique solution of the problem
\[
\begin{array}{rlll}
\Delta \pi_i+\lambda\,\pi_i&=&-\lambda\,w_i&\text{in }\Omega,
\\
\pi_i&=&-w_i&\text{on }\partial\Omega,
\end{array}
\]
and then we  shall consider as a first approximation of the solution to $(\wp_\lambda)$ one of the form
\[
U^0 = U_1+\ldots+U_k,
\]
where
\[
U_i(x) = w_i(x) + \pi_i(x).
\]
Observe that $U_i\in H_0^1(\Omega)$ and satisfies the equation
\begin{equation}
\label{projection}
\left\{
\begin{array}{rlll}
\Delta U_i+\lambda U_i&=&-w_i^5&\text{in } \Omega,
\\
U_i&=&0&\text{on }\partial\Omega.
\end{array}
\right.
\end{equation}
Let us recall that the energy functional associated to $(\wp_\lambda)$ when $N=3$ is given by:
\[
J_\lambda(u)=\frac{1}{2}\int_{\Omega}\vert\nabla u\vert^2-\frac{\lambda}{2}\int_{\Omega}u^2-\frac{1}{6}\int_{\Omega}|u|^{6}.
\]
Let us write $\zeta= (\zeta_1,\ldots,\zeta_k)$ and $\mu=(\mu_1,\ldots,\mu_k)$ and note that $U^0 = U^0(\mu,\zeta)$.
Since we are looking for solutions close to $U^0(\mu,\zeta)$,  formally we expect
$
J_\lambda( U^0(\mu,\zeta) )$
to be almost critical in the parameters $\mu,\zeta$.
For this reason it is important to obtain an asymptotic formula of the functional $(\mu,\zeta)\rightarrow J_\lambda( U^0(\mu,\zeta))$ as $\mu\to 0$.

For any $\delta>0$ set
\begin{multline*}
\Omega_\delta^k:=\{
\zeta\equiv(\zeta_1,\ldots,\zeta_k)\in\Omega^k: \,\textrm{dist}(\zeta_i,\partial \Omega)>\delta, \vert \zeta_i-\zeta_j\vert>\delta,\\
 i=1,\ldots,k,\,j=1,\ldots,k,\, i\neq j
\} .
\end{multline*}
The main result in this section is the expansion of the energy in the case of a multi-bubble ansatz.

\begin{lemma}
\label{lemmaEnergyExpansion}
Let $\delta>0$ be fixed and let $\zeta\in\Omega_\delta^k$.
Then as $\mu_i\rightarrow 0$, the following expansion holds:
\begin{align*}
J_\lambda\Bigl(\sum_{i=1}^k U_i\Bigr):=&\,k\,a_0
+a_1\sum_{i=1}^k\Bigl(\mu_i\,g_{\lambda}(\zeta_i)-\sum_{j\neq i}\mu_i^{1/2}\,\mu_j^{1/2}\,G_{\lambda}(\zeta_i,\zeta_j)\Bigr)+a_2\,\lambda\,\sum_{i=1}^k \mu_i^2
\\
&
-a_3\,\sum_{i=1}^k\Bigl(\mu_i\,g_{\lambda}(\zeta_i)-\sum_{j\neq i}\mu_i^{1/2}\mu_j^{1/2}\,G_{\lambda}(\zeta_i,\zeta_j)\Bigr)^2
+ \theta_\lambda^{(1)} (\mu,\zeta) ,
\end{align*}
where  $\theta_\lambda^{(1)} (\zeta,\mu)$ is such  that for any $\sigma>0$ and $\delta>0$ there is $C$ such that
\[
\Bigl| \frac{\partial^{m+n}}{\partial\zeta^m\partial\mu^n}\,\theta_\lambda^{(1)}  (\zeta,\mu) \Bigr| \leq C (\mu_1+\ldots+\mu_k)^{3-\sigma -n} ,
%
\]
for   $m = 0,1$, $n = 0,1,2$,  $m+n\leq 2$, all small $\mu_i$, $i=1,2,\ldots,k$, and all $\zeta\in\Omega_\delta^k$.
\end{lemma}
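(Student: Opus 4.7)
\emph{Step 1: reformulation of the energy.} I would first exploit the equation \eqref{projection} satisfied by $U_i$ to eliminate the quadratic part of $J_\lambda$. Multiplying \eqref{projection} by $U_j$, integrating by parts, and summing in $i,j$ yields
\[
\int_\Omega \Bigl|\nabla \sum_i U_i\Bigr|^2 - \lambda \int_\Omega \Bigl(\sum_i U_i\Bigr)^2 = \sum_{i,j=1}^k \int_\Omega w_i^5 U_j ,
\]
so that
\[
J_\lambda\Bigl(\sum_i U_i\Bigr) = \frac{1}{2}\sum_{i,j=1}^k\int_\Omega w_i^5 U_j\, dx - \frac{1}{6}\int_\Omega \Bigl(\sum_i U_i\Bigr)^6 dx .
\]
The remaining integrals are all localized near the concentration points $\zeta_i$, which is what makes the multi-bubble expansion tractable.

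\emph{Step 2: asymptotic expansion of $\pi_i$.} I would then establish the uniform expansion
\[
\pi_i(x) = -4\pi\alpha_3\, \mu_i^{1/2}\, H_\lambda(\zeta_i, x) + \mu_i^{5/2}\, \rho_i(x) , \qquad x\in\overline{\Omega},
\]
where $\rho_i$ remains bounded together with its derivatives in $(\mu_i,\zeta_i)$ up to the required orders. The principal part is the natural ansatz: $H_\lambda(\zeta_i,\cdot)$ satisfies $(\Delta+\lambda) H_\lambda(\zeta_i,\cdot) = \lambda\,\Gamma(\cdot-\zeta_i)$ with boundary trace $\Gamma(\cdot-\zeta_i)$, and this matches $\pi_i$ to leading order because $w_i - 4\pi\alpha_3\mu_i^{1/2}\Gamma(\cdot-\zeta_i) = O(\mu_i^{5/2})$ away from $\zeta_i$. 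The bound on $\rho_i$ then follows from a maximum-principle estimate applied to the equation satisfied by $\pi_i + 4\pi\alpha_3\mu_i^{1/2} H_\lambda(\zeta_i,\cdot)$.

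\emph{Step 3: expansion of each integral.} Combining $\int_{\R^3} w_i^5 dx = \frac{4\pi\alpha_3^5}{3}\mu_i^{1/2}$ and $\int_{\R^3} w_i^6 dx = 6 a_0$ with the expansion of $\pi_i$ and a Taylor expansion of regular profiles near $\zeta_i$, I would compute the four pieces uniformly on $\Omega_\delta^k$: (i) the diagonal $\sum_i\int w_i^5 U_i$ produces $-a_1\sum_i \mu_i g_\lambda(\zeta_i)$ at main order (from $\int w_i^5\pi_i$), plus an $a_2\lambda\sum_i\mu_i^2$ correction extracted from the subleading part $\mu_i^{5/2}\rho_i$ of $\pi_i$; (ii) the off-diagonal $\sum_{i\neq j}\int w_i^5 U_j$ is evaluated using $U_j(\zeta_i) \sim 4\pi\alpha_3 \mu_j^{1/2} G_\lambda(\zeta_j,\zeta_i)$ and yields $a_1\sum_{i\neq j}\mu_i^{1/2}\mu_j^{1/2} G_\lambda(\zeta_i,\zeta_j)$; (iii) the multinomial expansion of $(\sum U_i)^6$ generates contributions of order $\mu^2$ from $\int w_i^4 \pi_i^2 \sim \mu_i^2 g_\lambda(\zeta_i)^2$, from $\int U_i^5 U_j \sim \mu_i^{3/2}\mu_j^{1/2} g_\lambda(\zeta_i) G_\lambda(\zeta_i,\zeta_j)$ and from $\int U_i^4 U_j^2 \sim \mu_i\mu_j G_\lambda(\zeta_i,\zeta_j)^2$, which regroup (by the binomial identity) into the single square $-a_3\sum_i[\mu_i g_\lambda(\zeta_i)-\sum_{j\neq i}\mu_i^{1/2}\mu_j^{1/2}G_\lambda(\zeta_i,\zeta_j)]^2$. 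The numerical values of $a_0,a_1,a_2,a_3$ fall out of the explicit evaluation of $\int_{\R^3}(1+|y|^2)^{-a} dy$ for $a=5/2,2,3$.

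\emph{Step 4: derivative estimates; the main obstacle.} What I expect to be hardest is not the leading orders but the uniform control of $\theta_\lambda^{(1)}$ together with up to two derivatives in $(\mu,\zeta)$, with only a loss of $\mu^{-n}$ per $\mu$-derivative. The plan is to differentiate under the integral sign at each stage of the expansion, using the self-similar scaling $\partial_{\mu_i} w_i = \mu_i^{-1}\,\phi_i(\cdot;\mu_i,\zeta_i)$ with $\phi_i$ of size comparable to $w_i$, and the analogous identity for $\partial_{\zeta_i}w_i$, together with the smooth dependence of $\pi_i$ (and hence of $\rho_i$) on $(\mu_i,\zeta_i)$ inherited from differentiating \eqref{projection} and applying standard elliptic regularity. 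The $\sigma>0$ in the stated bound is there to absorb the logarithmic factors $\log(1/\mu_i)$ arising in three-dimensional marginal integrals such as $\int w_i^5|x-\zeta_i|^2\,dx \sim \mu_i^{5/2}\log(1/\mu_i)$. This careful bookkeeping of the size of every error term and its $(\mu,\zeta)$-derivatives throughout the expansion is the genuine technical obstacle.
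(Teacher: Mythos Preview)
Your overall architecture (Steps~1, 3, 4) is essentially the one the paper uses, but Step~2 contains a real error that breaks Step~3(i).

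The expansion $\pi_i(x) = -4\pi\alpha_3\,\mu_i^{1/2}\,H_\lambda(\zeta_i,x) + \mu_i^{5/2}\rho_i(x)$ with $\rho_i$ bounded on $\overline\Omega$ is \emph{false near $x=\zeta_i$}. If you carry out your own maximum-principle argument, the function $v_i:=\pi_i+4\pi\alpha_3\mu_i^{1/2}H_\lambda(\zeta_i,\cdot)$ solves $(\Delta+\lambda)v_i=-\lambda\bigl(w_i-4\pi\alpha_3\mu_i^{1/2}\Gamma(\cdot-\zeta_i)\bigr)$; the right-hand side is $O(\mu_i^{5/2})$ only away from $\zeta_i$, while near $\zeta_i$ it is of size $\mu_i^{-1/2}$ in the rescaled variable. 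Rescaling $x=\zeta_i+\mu_i z$ shows $v_i\approx \mu_i^{3/2}\mathcal D_0(z)$, where $\mathcal D_0$ is the bounded solution of $\Delta\mathcal D_0=-\lambda\alpha_3\bigl((1+|z|^2)^{-1/2}-|z|^{-1}\bigr)$ on $\R^3$. This is exactly Lemma~\ref{lemma22} in the paper: the genuine next term is $\mu_i^{3/2}\mathcal D_0\bigl((x-\zeta_i)/\mu_i\bigr)$, not a uniformly bounded $\mu_i^{5/2}$ remainder.

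This matters for the energy: with your $\rho_i$ bounded, $\int_\Omega w_i^5\cdot\mu_i^{5/2}\rho_i=O(\mu_i^{5/2})\int w_i^5=O(\mu_i^3)$, which is too small to produce the $a_2\lambda\mu_i^2$ term you claim to extract in Step~3(i). In fact $a_2$ has two sources you are missing: the $\mu_i^{3/2}\mathcal D_0$ correction above (giving the $\bigl(|z|^{-1}-(1+|z|^2)^{-1/2}\bigr)U$ part of $a_2$), and the fact that $H_\lambda(\cdot,\zeta_i)$ is \emph{not} $C^1$ at the diagonal---one has $H_\lambda(\zeta_i+\mu_i z,\zeta_i)=g_\lambda(\zeta_i)+\tfrac{\lambda}{8\pi}\mu_i|z|+\theta_0$ with $\theta_0$ smooth (see the paper's proof), and the $\tfrac{\lambda}{8\pi}\mu_i|z|$ piece contributes the $\tfrac12|z|U^5$ part of $a_2$. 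Once Step~2 is corrected to include $\mathcal D_0$ and you use this diagonal expansion of $H_\lambda$, the rest of your outline goes through and matches the paper's argument (which organizes things by first isolating the single-bubble energies $J_\lambda(U_i)$ and quoting the known one-bubble expansion from \cite{DPDM}).
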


The $a_j$'s are
the following explicit constants
\begin{align}
\label{a0}
a_0:&=\frac{1}{3}\int_{\R^3}U^6
= \frac{1}{4}(\alpha_3\pi)^2,
\\
\label{a1}
a_1:&=2\pi\alpha_3\int_{\R^3}U^5
=8(\alpha_3\pi)^2,
\\
\label{a2}
a_2:&=\frac{\alpha_3}{2}\int_{\R^3}\biggl[\biggl(\frac{1}{\vert z\vert}-\frac{1}{\sqrt{1+\vert z\vert^2}}
\biggr)U+\frac{1}{2}\,\vert z\vert\, U^5\biggr]\,dz
a_2=(\alpha_3\pi)^2,
\\
\label{a3}
a_3:&=\frac{5}{2}(4\pi\alpha_3)^2\int_{\R^3}U^4
=120\,(\alpha_3\pi^2)^2.
\end{align}
To prove this lemma we need some preliminary results.
To begin with, we recall the relationship between the functions $\pi_{i}(x)$ and the regular part of Green's function, $H_\lambda(\zeta_i,x)$.
Let us consider the (unique) radial solution $\mathcal{D}_0(z)$ of the following problem in entire space
\[
\begin{array}{rlll}
\Delta \mathcal{D}_0&=&-\lambda\,\alpha_3\,
\Bigl(
\frac{1}{(1+\vert z\vert^2)^{1/2}}
-\frac{1}{\vert z\vert}
\Bigr)
&\text{in } \mathbb{R}^3,
\\
\mathcal{D}_0&\rightarrow
&0&
\text{as } \vert z\vert\rightarrow \infty.
\end{array}
\]
Then $\mathcal{D}_0(z)$ is a $C^{0,1}$
function with $\mathcal{D}_0(z)\sim \vert z\vert^{-1}\log \vert z\vert$ as $\vert z\vert\rightarrow \infty$.

\bigskip

\begin{lemma}
\label{lemma22}
For any $\sigma > 0$ the following expansion holds as $\mu_i\rightarrow 0$
\[
\mu_i^{-\frac{1}{2}}\pi_i(x)=-4\pi\alpha_3\,H_{\lambda}(x,\zeta_i)+\mu_i\,\mathcal{D}_0\Bigl(\frac{x-\zeta_i}{\mu_i}\Bigr)+\mu_i^{2-\sigma}\,\theta(\mu_i,x,\zeta_i)
\]
where for $m=0,1$, $n=0,1,2$, $m+n\leq 2$, the function
$\mu_i^n\frac{\partial^{m+n}}{\partial\zeta_i^m\partial\mu_i^n}\,\theta(\mu_i,y,\zeta_i)
$
is bounded uniformly on $y\in\Omega$, all small $\mu_i$ and $\zeta_i$ in compact subsets of $\Omega$.
\end{lemma}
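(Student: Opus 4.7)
The plan is to set up the candidate error
\[
\phi_i(x) := \pi_i(x) + 4\pi\alpha_3\,\mu_i^{1/2}\,H_\lambda(x,\zeta_i) - \mu_i^{3/2}\,\mathcal{D}_0\!\Bigl(\tfrac{x-\zeta_i}{\mu_i}\Bigr),
\]
so that the claim is equivalent to the uniform bound $\mu_i^{-1/2}\phi_i = O(\mu_i^{2-\sigma})$ together with the analogous bounds for its derivatives, suitably weighted by $\mu_i^n$. I would prove it by showing that $\phi_i$ solves a linear Dirichlet problem whose data and right hand side are both $O(\mu_i^{5/2-\sigma})$, and then invoking standard elliptic estimates for $-\Delta-\lambda$, which is invertible since $\lambda < \lambda_1$.

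The first step is to compute $(\Delta+\lambda)\phi_i$. From $G_\lambda = \Gamma - H_\lambda$ and the defining equation for $G_\lambda$ one gets $(\Delta+\lambda)H_\lambda(\cdot,\zeta_i)=\lambda\,\Gamma(\cdot-\zeta_i)$; the scaling $z=(x-\zeta_i)/\mu_i$ turns the equation for $\mathcal{D}_0$ into
\[
(\Delta+\lambda)\Bigl[\mu_i^{3/2}\mathcal{D}_0\!\Bigl(\tfrac{x-\zeta_i}{\mu_i}\Bigr)\Bigr]
= -\lambda w_i(x) + \lambda\,\frac{\alpha_3\,\mu_i^{1/2}}{|x-\zeta_i|} + \lambda\,\mu_i^{3/2}\mathcal{D}_0\!\Bigl(\tfrac{x-\zeta_i}{\mu_i}\Bigr).
\]
Adding the three pieces and using $(\Delta+\lambda)\pi_i = -\lambda w_i$, the $-\lambda w_i$ terms cancel and the Newtonian pieces $\lambda\alpha_3\mu_i^{1/2}/|x-\zeta_i|$ cancel as well, leaving
\[
(\Delta+\lambda)\phi_i = -\lambda\,\mu_i^{3/2}\,\mathcal{D}_0\!\Bigl(\tfrac{x-\zeta_i}{\mu_i}\Bigr)\quad\text{in }\Omega.
\]
The boundary value of $\phi_i$ is controlled by Taylor-expanding $w_i|_{\partial\Omega}$ around $\mu_i=0$: since $|x-\zeta_i|\ge\delta$ on $\partial\Omega$, the difference $-w_i + 4\pi\alpha_3\mu_i^{1/2}\Gamma(x-\zeta_i)$ is $O(\mu_i^{5/2})$, while $\mu_i^{3/2}\mathcal{D}_0((x-\zeta_i)/\mu_i)$ is $O(\mu_i^{5/2}\log(1/\mu_i))$ by the far-field asymptotics of $\mathcal{D}_0$. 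Thus $\phi_i|_{\partial\Omega} = O(\mu_i^{5/2-\sigma})$ for every $\sigma>0$.

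For the $L^\infty$ estimate I would split $\phi_i$ into the harmonic-type part carrying the boundary data (handled by the maximum principle) and the part with zero boundary data driven by $-\lambda\mu_i^{3/2}\mathcal{D}_0$. The inhomogeneity is not $O(\mu_i^{5/2})$ pointwise, but by rescaling one gets
\[
\Bigl\|\mu_i^{3/2}\mathcal{D}_0\!\Bigl(\tfrac{\cdot\,-\zeta_i}{\mu_i}\Bigr)\Bigr\|_{L^p(\Omega)} \lesssim \mu_i^{3/2+3/p}
\]
for every $p>3$ (where the $L^p$ integrability of $\mathcal{D}_0$ at infinity holds in view of the $|z|^{-1}\log|z|$ decay). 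Picking $p>3$ arbitrarily close to $3$ and applying Calder\'on-Zygmund together with Sobolev embedding $W^{2,p}\hookrightarrow L^\infty$ in dimension three gives $\|\phi_i\|_{L^\infty(\Omega)} = O(\mu_i^{5/2-\sigma})$, which is the desired bound for the zeroth-order term after dividing by $\mu_i^{1/2}$.

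The derivative estimates would follow the same template applied to the equations satisfied by $\partial_{\zeta_i}\phi_i$, $\partial_{\mu_i}\phi_i$ and the second derivatives. The subtle point, and what I expect to be the main technical obstacle, is that each $\partial_{\mu_i}$ hitting $\mathcal{D}_0((x-\zeta_i)/\mu_i)$ produces a factor $\mu_i^{-1}$ times a function of the same type as $\mathcal{D}_0$ (namely $-z\cdot\nabla\mathcal{D}_0(z)/\mu_i$), which also decays like $|z|^{-1}\log|z|$, so each differentiation in $\mu_i$ degrades the estimate by one power of $\mu_i$. Repeating the $L^p$-scaling argument for the differentiated forcings then yields precisely bounds of order $\mu_i^{5/2-\sigma-n}$, explaining the weight $\mu_i^n$ in front of the derivatives in the statement. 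The $\zeta_i$-derivatives cost no extra power of $\mu_i^{-1}$ and are treated identically, so the $(m,n)=(1,1)$ case is no harder than $(0,1)$.
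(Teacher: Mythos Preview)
The paper does not prove this lemma itself; it simply cites \cite[Lemma~2.2]{DPDM}. Your outline is precisely the standard argument for such expansions (and is in spirit the one carried out in \cite{DPDM}): form the remainder $\phi_i$, check that $(\Delta+\lambda)\phi_i=-\lambda\,\mu_i^{3/2}\mathcal D_0((x-\zeta_i)/\mu_i)$ with boundary data of size $O(\mu_i^{5/2-\sigma})$, and conclude by elliptic estimates together with the $L^p$ scaling of $\mathcal D_0$.

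Two small remarks so the sketch tightens into a full proof. First, the phrase ``handled by the maximum principle'' is slightly loose: since $\lambda>0$ the classical maximum principle for $\Delta$ does not apply directly, but because $\lambda<\lambda_1$ the operator $-\Delta-\lambda$ with Dirichlet conditions still has a positive Green function, so the estimate $\|u\|_{L^\infty}\le C\|g\|_{L^\infty}$ for $(\Delta+\lambda)u=0$, $u|_{\partial\Omega}=g$, holds; just say this rather than invoking the maximum principle. Second, for the $\zeta_i$-derivative your heuristic ``no extra power of $\mu_i^{-1}$'' is correct in its conclusion but for a slightly different reason than suggested: hitting $\mathcal D_0((x-\zeta_i)/\mu_i)$ with $\partial_{\zeta_i}$ does produce an explicit $\mu_i^{-1}$, but the resulting forcing $\nabla\mathcal D_0$ decays like $|z|^{-2}\log|z|$ and so lies in $L^p(\R^3)$ already for $p>3/2$, recovering exactly the lost power in the scaling. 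With these points made explicit the argument is complete.
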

\begin{proof}
See \cite[Lemma 2.2]{DPDM}.
\end{proof}


From Lemma \ref{lemma22} and the fact that,
away from $ x=\zeta_i$,
\[
\mathcal{D}_0\Bigl(\frac{x-\zeta_i}{\mu_i}\Bigr)=O(\mu_i\log \mu_i),
\]
the following holds true.
\begin{lemma}
\label{Uiexp}
Let $\delta>0$ be given. Then for any $\sigma > 0$ and $x\in\Omega\setminus B_\delta(\zeta_i)$ the following expansion holds as $\mu_i\rightarrow 0$
\[
\mu_i^{-\frac{1}{2}}U_i(x)=4\pi\,\alpha_3\,G_{\lambda}(x,\zeta_i)+\mu_i^{2-\sigma}\,\hat{\theta}(\mu_i,x,\zeta_i)
\]
where for $m=0,1$, $n=0,1,2$, $m+n\leq 2$, the function
$\mu_i^n\frac{\partial^{m+n}}{\partial\zeta_i^m\partial\mu_i^n}\,\hat{\theta}(\mu_i,x,\zeta_i)
$
is bounded uniformly on $x\in\Omega\setminus B_\delta(\zeta_i)$, all small $\mu_i$ and $\zeta_i$ in compact subsets of $\Omega$.
\end{lemma}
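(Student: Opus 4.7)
The plan is to read off the statement from the decomposition $U_i = w_i + \pi_i$, expanding each summand separately on the region $\Omega\setminus B_\delta(\zeta_i)$, and then combining via the identity $G_\lambda = \Gamma - H_\lambda$. The nontrivial input is Lemma \ref{lemma22}, which already handles $\pi_i$; my job is to Taylor expand the explicit bubble and to absorb the $\mu_i\mathcal{D}_0$ term from Lemma \ref{lemma22} into the remainder.

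For the bubble, I would write
\[
\mu_i^{-1/2} w_i(x) \;=\; \frac{\alpha_3}{\bigl(\mu_i^{2} + |x-\zeta_i|^{2}\bigr)^{1/2}} \;=\; \frac{\alpha_3}{|x-\zeta_i|} + R_w(\mu_i,x,\zeta_i),
\]
and observe that for $x\in\Omega\setminus B_\delta(\zeta_i)$ one has $|R_w|\le C\mu_i^{2}|x-\zeta_i|^{-3}\le C\mu_i^{2}$, uniformly in $\zeta_i$ in compact subsets of $\Omega$. The leading term is exactly $4\pi\alpha_3\,\Gamma(x-\zeta_i)$. Differentiating the Taylor expansion in $\zeta_i$ adds additional factors of $|x-\zeta_i|^{-1}$, which remain bounded on the domain of interest, while differentiating in $\mu_i$ produces factors that are absorbed by the required $\mu_i^{n}$ prefactor in the statement. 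Thus the quantity $\mu_i^{\sigma-2} R_w$ plays the role of a piece of $\hat\theta$ with all the required uniform control.

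For the correction $\pi_i$, Lemma \ref{lemma22} gives
\[
\mu_i^{-1/2} \pi_i(x) = -4\pi\alpha_3\,H_\lambda(x,\zeta_i) + \mu_i\,\mathcal{D}_0\!\Bigl(\tfrac{x-\zeta_i}{\mu_i}\Bigr) + \mu_i^{2-\sigma}\theta(\mu_i,x,\zeta_i),
\]
with the required derivative bounds on $\theta$. The only point to verify is that the middle term can be reclassified as error on $\Omega\setminus B_\delta(\zeta_i)$. Using the asymptotics $\mathcal{D}_0(z)\sim |z|^{-1}\log|z|$ (and the corresponding decay for its derivatives, which follows by differentiating the defining ODE for the radial $\mathcal{D}_0$) and the fact that $|x-\zeta_i|/\mu_i\ge \delta/\mu_i\to\infty$, one gets
\[
\Bigl|\mu_i\,\mathcal{D}_0\!\Bigl(\tfrac{x-\zeta_i}{\mu_i}\Bigr)\Bigr| \le C\,\frac{\mu_i^{2}}{|x-\zeta_i|}\,\log\!\Bigl(\tfrac{|x-\zeta_i|}{\mu_i}\Bigr) \le C\,\mu_i^{2}|\log\mu_i| \le C\,\mu_i^{2-\sigma},
\]
and analogous estimates for the $\zeta_i$ and $\mu_i$ derivatives (the $\mu_i^{n}$ prefactor exactly compensates the $\mu_i^{-n}$ scaling introduced when differentiating the rescaled argument). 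Adding the two expansions and using $\Gamma(x-\zeta_i) - H_\lambda(x,\zeta_i) = G_\lambda(x,\zeta_i)$ yields the claimed expansion with
\[
\hat\theta := \mu_i^{\sigma-2}\Bigl[R_w + \mu_i\,\mathcal{D}_0\!\Bigl(\tfrac{x-\zeta_i}{\mu_i}\Bigr)\Bigr] + \theta.
\]

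The main obstacle, and the only place where care is needed, is the bookkeeping of derivatives of the rescaled term $\mu_i\,\mathcal{D}_0((x-\zeta_i)/\mu_i)$ in $\mu_i$: because differentiation in $\mu_i$ pulls down factors of $|x-\zeta_i|/\mu_i^{2}$, one has to use the prescribed $\mu_i^{n}$ prefactor and the asymptotic bounds on $\nabla\mathcal{D}_0$ and $\nabla^{2}\mathcal{D}_0$ simultaneously. Once those asymptotics (with logarithmic loss, which is absorbed by choosing $\sigma>0$) are in hand, all estimates reduce to the same $\mu_i^{2}|\log\mu_i|$ bound as above.
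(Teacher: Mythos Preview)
Your proposal is correct and follows exactly the approach the paper indicates: the paper simply states that the lemma follows from Lemma~\ref{lemma22} together with the fact that $\mathcal{D}_0\bigl((x-\zeta_i)/\mu_i\bigr)=O(\mu_i\log\mu_i)$ away from $\zeta_i$. You have supplied the details the paper omits---the Taylor expansion of $\mu_i^{-1/2}w_i$ yielding $4\pi\alpha_3\Gamma(x-\zeta_i)$ plus an $O(\mu_i^2)$ remainder, and the derivative bookkeeping for the rescaled $\mathcal{D}_0$ term---but the skeleton is identical.
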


We also recall the expansion of the energy for the case of a single bubble, which was proved in \cite{DPDM}.

\begin{lemma}
\label{lemma21}
For any $\sigma > 0$ the following expansion holds as $\mu_i\rightarrow 0$
\begin{equation*}
J_\lambda(U_i)=a_0+a_1\,g_{\lambda}(\zeta_i)\,\mu_i+
\bigl(a_2\,\lambda-a_3\,g_{\lambda}(\zeta_i)^2\bigr)\,\mu_i^2+\mu_i^{3-\sigma}\,
\theta(\mu_i,\zeta_i),
\end{equation*}
where for $m=0,1$, $n=0,1,2$, $m+n\leq 2$, the function
$\mu_i^n\frac{\partial^{m+n}}{\partial\zeta_i^m\partial\mu_i^n}\,\theta(\mu_i,\zeta_i)
$
is bounded uniformly on all small $\mu_i$ and $\zeta_i$ in compact subsets of $\Omega$.
The $a_j$'s are given in  \eqref{a0}--\eqref{a3}.
\end{lemma}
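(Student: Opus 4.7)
The plan is to derive the expansion from the variational identity implied by \eqref{projection}. Testing $\Delta U_i + \lambda U_i = -w_i^5$ against $U_i$ and integrating by parts (using $U_i|_{\partial\Omega}=0$) yields
\[
\int_\Omega |\nabla U_i|^2 - \lambda\int_\Omega U_i^2 = \int_\Omega w_i^5 U_i,
\]
so that
\[
J_\lambda(U_i) = \frac{1}{2}\int_\Omega w_i^5 U_i - \frac{1}{6}\int_\Omega U_i^6.
\]
Writing $U_i = w_i + \pi_i$ and expanding $(w_i+\pi_i)^6$ by the binomial formula, this rearranges as
\[
J_\lambda(U_i) = \frac{1}{3}\int_\Omega w_i^6 \;-\; \frac{1}{2}\int_\Omega w_i^5 \pi_i \;-\; \frac{5}{2}\int_\Omega w_i^4 \pi_i^2 \;+\; R,
\]
where $R$ collects the $O(\pi_i^3)$ terms. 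Since Lemma \ref{lemma22} gives $\|\pi_i\|_\infty = O(\mu_i^{1/2})$ uniformly and the cubic and higher cross terms, after the change of variables $z=(x-\zeta_i)/\mu_i$, produce at most $\mu_i^3$ (with the expected weights in $\mu_i$ when differentiated), one checks $R = \mu_i^{3-\sigma}\theta$ of the required kind.

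Next I would expand the three leading integrals separately. For the pure bubble term, the substitution $z=(x-\zeta_i)/\mu_i$ and the decay $U^6\sim |z|^{-6}$ give
\[
\int_\Omega w_i^6 \,dx = \int_{\R^3} U^6 \,dz + O(\mu_i^3),
\]
so $\tfrac{1}{3}\int_\Omega w_i^6 = a_0 + O(\mu_i^3)$. For $\int_\Omega w_i^4 \pi_i^2$, the leading piece of $\pi_i$ from Lemma \ref{lemma22} is $-4\pi\alpha_3\mu_i^{1/2} H_\lambda(x,\zeta_i)$; evaluating $H_\lambda(\zeta_i+\mu_i z,\zeta_i) = g_\lambda(\zeta_i) + O(\mu_i|z|)$ and substituting gives
\[
-\frac{5}{2}\int_\Omega w_i^4 \pi_i^2 = -\frac{5}{2}(4\pi\alpha_3)^2 g_\lambda(\zeta_i)^2 \mu_i^2 \int_{\R^3} U^4 \,dz + O(\mu_i^{3-\sigma}) = -a_3 g_\lambda(\zeta_i)^2 \mu_i^2 + O(\mu_i^{3-\sigma}).
\]

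The delicate term is $\int_\Omega w_i^5 \pi_i$. Inserting the full expansion of Lemma \ref{lemma22} and changing variables gives
\[
\int_\Omega w_i^5 \pi_i \,dx = \mu_i\int_{\R^3} U^5(z)\bigl[-4\pi\alpha_3 H_\lambda(\zeta_i+\mu_i z,\zeta_i) + \mu_i\mathcal{D}_0(z)\bigr]\,dz + O(\mu_i^{3-\sigma}).
\]
A Taylor expansion of $H_\lambda$ in its first argument around $\zeta_i$, together with the radial symmetry identity $\int U^5 z\,dz = 0$, produces the leading term $-4\pi\alpha_3 g_\lambda(\zeta_i)\mu_i\int U^5\,dz$, whose $-\tfrac12$ prefactor yields $a_1 g_\lambda(\zeta_i)\mu_i$. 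The second-order piece picks up (i) the non-smooth contribution of $H_\lambda$ near the diagonal, coming from $\Delta_x H_\lambda = \lambda G_\lambda$ and therefore producing a $\lambda|x-\zeta_i|/(8\pi)$ increment in $H_\lambda(\zeta_i+\mu_i z,\zeta_i)$, and (ii) the $\mu_i\mathcal{D}_0$ contribution. The latter is reduced by integrating by parts using $\Delta U = -U^5$ and the defining equation $\Delta\mathcal{D}_0 = -\lambda(U-\alpha_3/|z|)$, which rewrites $\int U^5\mathcal{D}_0 = \lambda\int(U^2 - \alpha_3 U/|z|)\,dz$. Combining this with the $|z|$-term from $H_\lambda$ reassembles exactly the constant $a_2$ from \eqref{a2}, multiplied by $\lambda\mu_i^2$.

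The derivative estimates follow by differentiating each of the above expansions: derivatives in $\zeta_i$ hit the smooth factor $H_\lambda(\zeta_i+\mu_i z,\zeta_i)$ and its Taylor remainders, while a $\partial/\partial\mu_i$ on a function of $z=(x-\zeta_i)/\mu_i$ costs one factor of $\mu_i^{-1}$, which is precisely the weight $\mu_i^n$ appearing in the statement; the uniformity of the remainder $\theta$ in Lemma \ref{lemma22} transfers directly. The main obstacle is the simultaneous bookkeeping of the two $\mu_i^2$ contributions described above (the $|z|$ piece of the non-smooth $H_\lambda$ and the $\mathcal{D}_0$ integral) and verifying that their sum matches the explicit constant $a_2$, rather than any of the individual calculations, which are routine once the change of variables and Lemma \ref{lemma22} are in place.
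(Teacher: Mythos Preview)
The paper does not give its own proof of this lemma; it simply records the statement and cites \cite{DPDM}. Your argument is a correct reconstruction of that reference's proof: the starting identity $J_\lambda(U_i)=\tfrac12\int w_i^5 U_i-\tfrac16\int U_i^6$, the binomial splitting, the change of variables $z=(x-\zeta_i)/\mu_i$, and the use of Lemma~\ref{lemma22} together with the near-diagonal expansion $H_\lambda(\zeta_i+\mu_i z,\zeta_i)=g_\lambda(\zeta_i)+\tfrac{\lambda}{8\pi}\mu_i|z|+\theta_0$ are exactly the ingredients used there (and reused in the surrounding proofs of the present paper, e.g.\ in Lemma~\ref{lemmaEnergyExpansion}). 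One small point worth making explicit in your remainder bound: the cubic cross term $\int_\Omega w_i^3\pi_i^3$ involves $\int_{|z|\le C/\mu_i}U^3$, which diverges logarithmically, so the remainder is $O(\mu_i^3\log(1/\mu_i))$ rather than $O(\mu_i^3)$; this is precisely why the statement carries $\mu_i^{3-\sigma}$ and your write-up should say so. Likewise, in the integration by parts $\int U^5\mathcal{D}_0=\lambda\int(U^2-\alpha_3 U/|z|)$, the two pieces $\int U^2$ and $\int \alpha_3 U/|z|$ are separately divergent in $\R^3$; only their difference converges (it decays like $|z|^{-4}$), so the identity should be read as a single convergent integral, which is how you have written it.
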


\bigskip

\begin{proof}[Proof of Lemma~\ref{lemmaEnergyExpansion}]
\noindent
We decompose
\begin{align*}
J_\lambda \Bigl(\sum_{i=1}^k U_i\Bigr)
&=\frac{1}{2}\sum_{i=1}^k\Bigl(\int_{\Omega} \vert\nabla U_i\vert^2+\sum_{j\neq i}\int_{\Omega}\nabla U_i\cdot \nabla U_j\Bigr)
\\
&\quad -\frac{\lambda}{2}\sum_{i=1}^k\Bigl( \int_{\Omega}U_i^2+\sum_{j\neq i}\int_{\Omega}U_i\,U_j\Bigr)-\frac{1}{6}\int_{\Omega}\Bigl(\sum_{i=1}^k U_i\Bigr)^6
\\
&  =\sum_{i=1}^kJ_\lambda(U_i)+\frac{1}{2}\sum_{i=1}^k\sum_{j\neq i}\int_{\Omega}[\nabla U_i\cdot \nabla U_j-\lambda \,U_i\,U_j]
\\
& \quad -\frac{1}{6}\int_{\Omega}\Bigl[\Bigl(\sum_{i=1}^k U_i\Bigr)^6-\sum_{i=1}^k U_i^6\Bigr].
\end{align*}
Integrating by parts in $\Omega$ we get
\[
\int_{\Omega}\nabla U_i\cdot \nabla U_j=
\int_{\Omega}(-\Delta U_i)U_j+
\int_{\partial\Omega}\frac{\partial U_i}{\partial \eta} U_j=\int_{\Omega}(-\Delta U_i)U_j,
\]
where $\frac{\partial }{\partial \eta}$ denotes the derivative along the unit outgoing normal at a point of $\partial \Omega$.
From \eqref{projection} one gets
\[
\int_{\Omega}\nabla U_i\cdot \nabla U_j=\int_{\Omega}(-\Delta U_i)U_j=\int_{\Omega}(\lambda U_i+w_i^5)U_j.
\]
and so
\[
\int_{\Omega}\nabla U_i\cdot \nabla U_j-\lambda\int_{\Omega}U_i\, U_j=\int_{\Omega}w_i^5\, U_j.
\]
Hence,
\begin{equation}
J_\lambda \Bigl(\sum_{i=1}^k U_i\Bigr)=\sum_{i=1}^kJ_\lambda(U_i)+\frac{1}{2}\sum_{i=1}^k \sum_{j\neq i}\int_{\Omega} w_i^5\, U_j-\frac{1}{6}\int_{\Omega}\Bigl[\Bigl(\sum_{i=1}^k U_i\Bigr)^6-\sum_{i=1}^k U_i^6\Bigr].
\label{Jlambda}
\end{equation}
Let $\rho\in(0,\delta/2)$ and denote by
\[
\mathcal{O}_\rho=\Omega\setminus\cup_{j=1}^kB_{\rho}(\zeta_j).
\]
Let us decompose
\begin{equation}
\label{Ei}
\int_{\Omega}\Bigl[\Bigl(\sum_{i=1}^k U_i\Bigr)^6-\sum_{i=1}^k U_i^6\Bigr]=\sum_{i=1}^k\int_{B_{\rho}(\zeta_i)}
E_i+\sum_{i=1}^k
\int_{\mathcal{O}_\rho}
E_i,
\end{equation}
where
\begin{align}
\notag
E_i:=&\Bigl[(U_i+Q_i)^6-U_i^6\Bigr]-\sum_{j\neq i} U_j^6\\
=&6\,(U_i^5\,Q_i+U_i\,Q_i^5)+15\,(U_i^4\,Q_i^2+Q_i^2\,U_i^4)
+20\,U_i^3\,Q_i^3-\sum_{j\neq i} U_j^6.
\label{U6}
\end{align}
and
$Q_i:=\sum_{j\neq i} U_j$.

From now on, we write simply $O(\mu^r)$ to indicate that some function is of the order of $(\mu_1+\ldots+\mu_k)^r$ for any $r>0$.

Notice that, if $s+t=6$,
\[
\mathcal{R}_{i,j}^{s,t}:=\int_{\mathcal{O_\rho}} U_i^s\,U_j^t=O(\mu^3).
\]
If, additionally, $s>t$,
\[
\tilde{\mathcal{R}}_{i,j}^{s,t}:=\int_{B_\rho(\zeta_i)} U_i^t\,U_j^s=O(\mu^3).
\]
This implies, in particular, that $\int_{\mathcal{O}_\rho}
E_i=O(\mu^3)$ and that $\int_{B_{\rho}(\zeta_i)}U_j^6=O(\mu^3)$.

(i)\,If $s=5$ and $t=1$, then we have
\begin{align}
\label{Ui5Uj}
\int_{B_\rho(\zeta_i)}U_i^5\,U_j&=
\int_{B_\rho(\zeta_i)}w_i^5\,U_j
+5\int_{B_\rho(\zeta_i)}w_i^4\,\pi_i\,U_j+
\mathcal{R}_{i,j}^1,
\end{align}
where
\[
\mathcal{R}_{ij}^1:=20
\int_0^1 d\tau\,(1-\tau)
\int_{B_\rho(\zeta_i)}(w_i+\tau\pi_i)^3\,\pi_i^2\,U_j.
\]
Using the change of variable $x=\zeta_i+\mu_i z$
and calling $B_{\mu_i}=B_{\frac{\rho}{\mu_i}}(0)$
we find that
\[
\int_{B_\rho(\zeta_i)}w_i^5\,U_j\,dx=
\mu_i^{\frac{1}{2}}\mu_j^{\frac{1}{2}}\int_{B_{\mu_i}}U^5(z)\,\mu_j^{-\frac{1}{2}}\,U_j(\zeta_i+\mu_i z)\,dz.
\]
By Lemma \ref{Uiexp} we have
\[
\mu_j^{-\frac{1}{2}}\,U_j(\zeta_i+\mu_i z)=4\pi\alpha_3\,G_{\lambda}(\zeta_i+\mu_i z,\zeta_j)+\mu_j^{2-\sigma}\hat{\theta}(\mu_j,\zeta_i+\mu_i z,\zeta_j).
\]
We expand
\begin{equation}
\label{Greenexp}
G_\lambda(\zeta_i+\mu_i z,\zeta_j)=
G_\lambda(\zeta_i,\zeta_j)+
\mu_i\, {\bf c}\cdot z+\theta_2(\zeta_i+\mu_i z,\zeta_j),
\end{equation}
where ${\bf c}=D_1G_\lambda(\zeta_i,\zeta_j)$ and $\vert\theta_2(\zeta_i+\mu_i z,\zeta_j)\vert\leq C\mu_i^2\,\vert z\vert^2.$

\noindent
By symmetry,
\[
\int_{B_{\mu_i}}({\bf c}\cdot z)\,U^5(z)\,dz=0
\]
an so,
\begin{align}
\int_{B_\rho(\zeta_i)}w_i^5\,U_j\,dy=&4\pi\alpha_3 \,
\mu_i^{\frac{1}{2}}\,\mu_j^{\frac{1}{2}}\,G_\lambda(\zeta_i,\zeta_j)\int_{\R^3}U^5(z)\,dz+\mathcal{R}_{i,j}\notag
\\
=&2a_1\,
\mu_i^{\frac{1}{2}}\,\mu_j^{\frac{1}{2}}\,G_\lambda(\zeta_i,\zeta_j)+\mathcal{R}_{i,j}^2,
\label{wi5Uj}
\end{align}
where
$
a_1:= 2\pi\,\alpha_3\int_{\R^3}U^5
$ and
\begin{align*}
\mathcal{R}_{i,j}^2:=&-4\pi\alpha_3 \,
\mu_i^{\frac{1}{2}}\,\mu_j^{\frac{1}{2}}\,G_\lambda(\zeta_i,\zeta_j)\int_{\R^3\setminus B_{\mu_i}}U^5(z)\,dz\\
&+
4\pi\alpha_3 \,
\mu_i^{\frac{1}{2}}\,\mu_j^{\frac{1}{2}}\,\int_{B_{\mu_i}}U^5(z)\,\theta_2(\zeta_i+\mu_i z,\zeta_j)\,dz\\
&+\mu_i^{\frac{1}{2}}\,\mu_j^{\frac{1}{2}}\int_{B_{\mu_i}}\mu_j^{2-\sigma}\,U^5(z)\,\hat{\theta}(\mu_j,\zeta_i+\mu_i z,\zeta_j)
\,dz.
\end{align*}
From Lemma~\ref{lemma22} and \cite[Appendix]{DPDM}, we have the following expansions, for any $\sigma > 0$, as $\mu_i\rightarrow 0$
\begin{equation*}
\mu_i^{-\frac{1}{2}}\pi_i(\zeta_i+\mu_iz)=-4\pi\alpha_3\,H_{\lambda}(\zeta_i+\mu_iz,\zeta_i)+\mu_i\,\mathcal{D}_0(z)+\mu_i^{2-\sigma}\,\theta(\mu_i,\zeta_i+\mu_iz,\zeta_i)
\end{equation*}
\begin{equation*}
H_{\lambda}(\zeta_i+\mu_iz,\zeta_i)=g_{\lambda}(\zeta_i)+\frac{\lambda}{8\pi}\,\mu_i\vert z\vert
+\theta_0(\zeta_i,\zeta_i+\mu_iz)
\end{equation*}
where $\theta_0$ is a function of class $C^2$ with $\theta_0(\zeta_i,\zeta_i)=0$.

\noindent
The above expressions, combined with
Lemma \ref{Uiexp} and \eqref{Greenexp}, gives
\begin{align}
\nonumber
\int_{B_\rho(\zeta_i)}w_i^4\,\pi_i\,U_j=&
\mu_i^{\frac{3}{2}}\,\mu_j^{\frac{1}{2}}\int_{B_{\mu_i}}U^4(z)\,\mu_i^{-\frac{1}{2}}\pi_i(\zeta_i+\mu_i z)\,\mu_j^{-\frac{1}{2}}U_j(\zeta_i+\mu_i z)\,dz
\\
\notag
&=-\mu_i^{\frac{3}{2}}\,\mu_j^{\frac{1}{2}}(4\pi\alpha_3)^2 \,g_\lambda(\zeta_i)\,
G_\lambda(\zeta_i,\zeta_j)\int_{\R^3}U^4(z)\,dz+\mathcal{R}_3\\
&=-\frac{2}{5}\,a_3\,\mu_i^{\frac{3}{2}}\,\mu_j^{\frac{1}{2}}\, g_\lambda(\zeta_i)\,
G_\lambda(\zeta_i,\zeta_j)+\mathcal{R}_{i,j}^3,
\label{wi4piiUj}
\end{align}
where $a_3:=\frac{5}{2}(4\pi\alpha_3)^2\int_{\R^3}U^4.$

From \eqref{Ui5Uj}, \eqref{wi5Uj} and \eqref{wi4piiUj}, we get
\begin{align}
\label{Ui5UjF}
\int_{B_\rho(\zeta_i)}U_i^5\,U_j=&
2\,a_1\,
\mu_i^{\frac{1}{2}}\mu_j^{\frac{1}{2}}G_\lambda(\zeta_i,\zeta_j)
-2\,a_3\,\mu_i^{\frac{3}{2}}\,\mu_j^{\frac{1}{2}} g_\lambda(\zeta_i)\,
G_\lambda(\zeta_i,\zeta_j)
\\&+
\mathcal{R}_{i,j}^1+\mathcal{R}_{i,j}^2+5\,\mathcal{R}_{i,j}^3,
\mathcal{R}_{i,j}^{5,1}.
\notag
\end{align}

(ii)\, If $s=4$ and $t=2$, we have
\begin{align*}
\int_{B_\rho(\zeta_i)}U_i^4\,U_j\,U_m&=
\int_{B_\rho(\zeta_i)}w_i^4\,U_j\,U_m+
\mathcal{R}_{i,j,m}^5,
\end{align*}
where
\[
\mathcal{R}_{i,j,m}^5:=4
\int_0^1 d\tau\,
\int_{B_\rho(\zeta_i)}(w_i+\tau\pi_i)^3\,\pi_i\,U_j\,U_m.
\]
From Lemma \ref{lemma22}, Lemma \ref{Uiexp}, and \eqref{Greenexp}, we get
\begin{align*}
\int_{B_\rho(\zeta_i)}w_i^4\,U_j\,U_m=&
\mu_i\,\mu_j\,\mu_m\int_{B_{\mu_i}}U^4(z)\,
\Bigl(\mu_j^{-\frac{1}{2}}\,U_j(\zeta_i+\mu_i z)\Bigr)\,
\Bigl(\mu_m^{-\frac{1}{2}}\,U_m(\zeta_i+\mu_i z)\Bigr)\,dz\\
&=\mu_i\,\mu_j\,\mu_m\,(4\pi\alpha_3)^2 \,
G_\lambda(\zeta_i,\zeta_j)\,G_\lambda(\zeta_i,\zeta_m)\int_{\R^3}U^4(z)\,dz+\mathcal{R}_{i,j,m}^6\\
&=\frac{2}{5}\,a_3\,\mu_i\,\mu_j\,\mu_m\,
G_\lambda(\zeta_i,\zeta_j)\,G_\lambda(\zeta_i,\zeta_m)+\mathcal{R}_{i,j,m}^6.
\end{align*}
Therefore,
\begin{align}
\int_{B_\rho(\zeta_i)}U_i^4\,U_j\,U_m&=
\frac{2}{5}\,a_3\,\mu_i\,\mu_j\,\mu_m\,
G_\lambda(\zeta_i,\zeta_j)\,G_\lambda(\zeta_i,\zeta_m)+\mathcal{R}_{i,j,m}^5
+\mathcal{R}_{i,j,m}^6.
\label{Ui4UjUm}
\end{align}

(iiii)\, If $s=3$ and $t=3$, we have
\begin{equation*}
\int_{B_\rho(\zeta_i)}U_i^3\,U_j^3=
\mathcal{R}_{i,j}^{8},
\end{equation*}
where
\begin{align*}
\mathcal{R}_{i,j}^8:=&\int_{B_\rho(\zeta_i)}w_i^3\,U_j^3+3\int_0^1 ds\,
\int_{B_\rho(\zeta_i)}(w_i+s\pi_i)^2\,\pi_i\,U_j^3.
\end{align*}
To analyse the size of the remainders $\mathcal{R}_{i,j}^\ell$ we proceed as in \cite{DPDM}. We have the following
\begin{equation*}
\frac{\partial^{m+n}}{\partial\zeta^m\partial\mu^n}\mathcal{R}_{i,j}^\ell=O(\mu^{3-(n+\sigma)})
\end{equation*}
for each $m=0,1$, $n=0,1,2$, $m+n\leq 2$,
$\ell=1,\ldots, 8$,
uniformly on all small $(\mu,\zeta)\in \Gamma_\delta$.

\noindent
Analogous statements hold true for ${\mathcal{R}}_{i,j}^{s,t}$ and $\tilde{\mathcal{R}}_{i,j}^{s,t}$ with $s+t=6$.

From \eqref{U6} and the previous analisys
we get that
\begin{align*}
\int_{B_{\rho}(\zeta_i)}
E_i
&=6\,\int_{B_{\rho}(\zeta_i)}U_i^5\,Q_i+15\int_{B_{\rho}(\zeta_i)}U_i^4\,Q_i^2+\mathcal{R}\\
&=
6\,\sum_{j\neq i}\int_{B_{\rho}(\zeta_i)}U_i^5\,U_j+15\sum_{j\neq i}\sum_{m\neq i}\int_{B_{\rho}(\zeta_i)}U_i^4\, U_j\, U_m+\mathcal{R}.
\end{align*}
This expression together with
\eqref{Ui5UjF} and \eqref{Ui4UjUm} yields
\begin{align*}
\int_{B_{\rho}(\zeta_i)}
E_i
&=
6\,\sum_{j\neq i}\Bigl[
2\,a_1\,
\mu_i^{\frac{1}{2}}\mu_j^{\frac{1}{2}}G_\lambda(\zeta_i,\zeta_j)
-2\,a_3\,\mu_i^{\frac{3}{2}}\,\mu_j^{\frac{1}{2}} g_\lambda(\zeta_i)\,
G_\lambda(\zeta_i,\zeta_j)\Bigr]\\
&+6\sum_{j\neq i}\sum_{m\neq i}
\Bigl[
a_3\,\mu_i\,\mu_j\,\mu_m\,
G_\lambda(\zeta_i,\zeta_j)\,G_\lambda(\zeta_i,\zeta_m)
\Bigr]\\
&=
6\,\sum_{j\neq i}\Bigl[
2\,a_1\,
\mu_i^{\frac{1}{2}}\mu_j^{\frac{1}{2}}G_\lambda(\zeta_i,\zeta_j)
-2\,a_3\,\mu_i^{\frac{3}{2}}\,\mu_j^{\frac{1}{2}} g_\lambda(\zeta_i)\,
G_\lambda(\zeta_i,\zeta_j)\Bigr]\\
&+6\,a_3\,\mu_i\Bigl(\sum_{j\neq i}
\mu_j\,
G_\lambda(\zeta_i,\zeta_j)
\Bigr)^2.
\end{align*}
Combining relations \eqref{Jlambda}, \eqref {Ei}, \eqref{U6},
\eqref{wi5Uj}, Lemma \ref{lemma21} and the above expression we get
the conclusion.
For the statement of this lemma $\theta_\lambda^{(1)}$ is defined as the sum of all remainders.

The formula
\[
\int_0^\infty\Bigl(\frac{r}{1+r^2} \Bigr)^q\frac{dr}{r^{\alpha+1}}=\frac{\Gamma\bigl(\frac{q-\alpha}{2}\bigr)\,\Gamma\bigl(\frac{q+\alpha}{2}\bigr)}{2\,\Gamma(q)}
\]
yields that
\[
a_1=8(\alpha_3\pi)^2,
\quad
a_3=120\,(\alpha_3\pi^2)^2.
\]
\end{proof}

\section{The linear problem}
\label{sectLinear}

Let $u$ be a solution of $(\wp_\lambda)$. For
$\varepsilon>0$, we define
\[
v(y) = \varepsilon^{1/2} u(\varepsilon y) .
\]
Then $v$ solves the boundary value problem
\begin{equation}
\label{maineqreesc}
\left\{
\begin{array}{rlll}
\Delta v+\varepsilon^2\,\lambda\, v&=&-v^5&\text{in } \Omega_\varepsilon,
\\
v&>&0&\text{in }\Omega_\varepsilon,
\\
v&=&0&\text{on }\partial\Omega_\varepsilon,
\end{array}
\right.
\end{equation}
where $\Omega_\varepsilon=\varepsilon^{-1}\,\Omega$.
Thus finding a solution of $(\wp_\lambda)$  which is a small perturbation of $\sum_{i=1}^k U_i $ is equivalent to finding a solution of $(\wp_\lambda)$ of the form
\[
\sum_{i=1}^k V_i  +\phi,
\]
where
\begin{align*}
V_i(y) &= \varepsilon^{ \frac{1}{2}} U_i( \varepsilon y)
= w_{\mu_i^{\prime},\zeta_i^{\prime}}(y)+\varepsilon^{\frac{1}{2}}\pi_i(\varepsilon\, y)
\quad y \in \Omega_\varepsilon,
\end{align*}
for $i=1,\ldots, k$, and $\phi$ is small in some appropriate sense.

Notice that $V_i$ satisfies
\begin{equation*}
\left\{
\begin{array}{rlll}
\Delta V_i+\varepsilon^2 \,\lambda \,V_i&=&-w_{\mu_i^{\prime},\zeta_i^{\prime}}^5&\text{in } \Omega_\varepsilon,
\\
V_i&=&0&\text{on }\partial\Omega_\varepsilon,
\end{array}
\right.
\end{equation*}
where
\begin{align}
\label{muiPrime}
\mu_i^{\prime}=\frac{\mu_i}{\varepsilon},\quad \zeta_i^{\prime}=\frac{\zeta_i}{\varepsilon}.
\end{align}
Then solving \eqref{maineqreesc} is equivalent to finding $\phi$ such that,
\begin{equation}
\label{linearizedproblem}
\left\{
\begin{array}{rlll}
L(\phi)&=&-N(\phi)-E&\text{in } \Omega_\varepsilon,
\\
\phi&=&0&\text{on }\partial\Omega_\varepsilon,
\end{array}
\right.
\end{equation}
where
\[
L(\phi)=\Delta\phi+\varepsilon^2\,\lambda\, \phi+5V^4\,\phi,
\]
\[
N(\phi)=(V+\phi)^5-V^5-5V^4\,\phi,
\]
\begin{equation}
\label{error}
E=V^5-
\sum_{i=1}^k   w_{\mu_i^{\prime},\zeta_i^{\prime}}^5.
\end{equation}
and
\begin{align}
\label{defV}
V = \sum_{i=1}^k V_i.
\end{align}
In what follows,
the canonical basis of $\R^3$ will be denoted by
\[
\textrm{e}_1=(1,0,0), \quad \textrm{e}_2=(0,1,0)\quad \textrm{e}_3=(0,0,1).
\]
Let $z_{i,j}$, $i=1,2,$ be given by
\begin{equation}
\left\{
\label{zij}
\begin{array}{rcl}
z_{i,j}(y)&=&D_{\zeta_i^{\prime}} w_{\mu_i^{\prime},\zeta_i^{\prime}}(y)\cdot \textrm{e}_j\quad j=1,2,3\\
z_{i,4}(y)&=&\frac{\partial\,w_{\mu_i^{\prime},\zeta_i^{\prime}}}{\partial\,\mu_i^{\prime}}(y).
\end{array}
\right.
\end{equation}
We recall that for each $i$, the functions $z_{i,j}$ for $j=1,...,4$, span the space
of all bounded solutions of the linearized problem:
\[
 \Delta z+5\,w_{\mu_i^{\prime},\zeta_i^{\prime}}^4\, z=0\quad\text{ in } \R^3.
 \]
A proof of this fact can be found for instance in \cite{Rey}.

Observe that
\[
\int_{\R^3} w_{\mu_i^{\prime},\zeta_i^{\prime}}^4 z_{i,j}\,z_{i,l}=0\quad\text{if }j\neq l.
\]
In order to study the operator $L$, the key idea is that, as $\varepsilon\rightarrow 0$, the linear operator $L$ is close to being the sum of
\[
\Delta +5w_{\mu_i^{\prime},\zeta_i^{\prime}}^4,
 \]
$i=1,\ldots,k$.


Rather than solving \eqref{linearizedproblem} directly, we will look for a solution of the following problem first: Find a function $\phi$ such that for certain constants $c_{i,j}$, $i=1,2$, $j=1,2,3,4$,
\begin{equation}
\label{linearizedproblemproj}
\left\{
\begin{array}{rlll}
L(\phi)&=&-N(\phi)-E+\sum_{i,j}c_{ij}\,w_{\mu_i^{\prime},\zeta_i^{\prime}}^4\,z_{ij}
&\text{in } \Omega_\varepsilon,
\\
\phi&=&0&\text{on }\partial\Omega_\varepsilon,
\\
\int_{\Omega_\varepsilon}w_{\mu_i^{\prime},\zeta_i^{\prime}}^4\,z_{ij}\,\phi&=&0&\text{for all  }i,j.
\end{array}
\right.
\end{equation}
After this is done, the remaining task is to adjust the parameters $\zeta_i^\prime, \mu_i^\prime$ in such a way that all constants $c_{ij}=0$.

In order to solve problem \eqref{linearizedproblemproj} it is necessary to understand its linear part. Given a function $h$ we consider the problem of finding $\phi$ and real numbers $c_{ij}$ such that
\begin{equation}
\label{linpart}
\left\{
\begin{array}{rlll}
L(\phi)&=&h+\sum_{i,j}c_{ij}\,w_{\mu_i^{\prime},\zeta_i^{\prime}}^4\,z_{ij}
&\text{in } \Omega_\varepsilon,
\\
\phi&=&0&\text{on }\partial\Omega_\varepsilon,
\\
\int_{\Omega_\varepsilon}w_{\mu_i^{\prime},\zeta_i^{\prime}}^4\,z_{ij}\,\phi&=&0&\text{for all  }i,j.
\end{array}
\right.
\end{equation}
We would like to show that this problem is uniquely solvable with uniform bounds in suitable functional spaces.
To this  end, it is convenient to introduce the following weighted norms.

\noindent
Given a fixed number $\nu\in(0,1)$,
we define
\begin{align*}
\Vert f\Vert_{\ast}
&=\sup_{y\in\Omega_\epsilon}
\Bigl(
\omega(y)^{-\nu}\,\vert f(y)\vert
+\omega(y)^{-\nu-1}\,\vert \nabla f(y)\vert
\Bigr)
\\
\Vert f\Vert_{\ast\ast}
&=\sup_{y\in\Omega_\epsilon}
\omega(y)^{-(2+\nu)}\,\vert f(y) \vert,
\end{align*}
where
\[
\omega(y)=\sum_{i=1}^k\bigl(1+\vert y-\zeta_i^{\prime}\vert\bigr)^{-1}.
\]

\begin{prop}
\label{solvability}
Let $0<\alpha<1$. Let $\delta>0$ be given.
Then there exist a positive number $\varepsilon_0$ and a constant $C>0$ such that if \,$0<\varepsilon<\varepsilon_0$, and
\begin{align}
\label{parametros}
\vert \zeta_i^{\prime}-\zeta_j^{\prime} \vert >\frac{\delta}{\varepsilon}, \ i\not=j; \quad
dist(\zeta_i^{\prime},\partial\Omega_{\varepsilon})>\frac{\delta}{\varepsilon}
\text{ and }
\delta<\mu_i^{\prime}<\delta^{-1},\  i=1,\ldots,k,
\end{align}
then for any $h\in C^{0,\alpha}(\Omega_\varepsilon)$ with $\Vert h\Vert_{\ast\ast}<\infty$, problem \eqref{linpart} admits a unique solution $\phi=T(h)\in C^{2,\alpha}(\Omega_\varepsilon)$. Besides,
\begin{equation}
\label{cotas}
\Vert T(h)\Vert_{\ast}\leq C\,\Vert h\Vert_{\ast\ast} \quad\text{and}\quad
\vert c_{ij}\vert\leq C\,\Vert h\Vert_{\ast\ast},\,\,i=1,\ldots,k,\,\, j=1,2,3,4.
\end{equation}
\end{prop}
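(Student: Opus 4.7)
The plan is to establish an a priori estimate $\|\phi\|_* \le C \|h\|_{**}$ for solutions of \eqref{linpart}, and then conclude existence by a Fredholm argument. The a priori estimate is proved by contradiction in the classical Lyapunov--Schmidt style, with two ingredients: (a) testing against the $z_{ij}$'s to control the Lagrange multipliers, and (b) a blow-up/barrier dichotomy for $\phi$ itself. Schauder estimates at the end upgrade regularity from $H^1$ to $C^{2,\alpha}$.

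For the a priori bound, suppose towards a contradiction there exist sequences $\varepsilon_n\to 0$, admissible parameters $(\mu'_{i,n},\zeta'_{i,n})$, right-hand sides $h_n$ with $\|h_n\|_{**}\to 0$, and solutions $(\phi_n,c_{ij,n})$ of \eqref{linpart} with $\|\phi_n\|_*=1$. I would first show $|c_{ij,n}|\to 0$ by multiplying \eqref{linpart} by $z_{ij}$ and integrating: since $\Delta z_{ij}+5w_{\mu'_i,\zeta'_i}^4 z_{ij}=0$ in $\R^3$, one gets
\begin{equation*}
\sum_{l}c_{ij,n}\int_{\Omega_\varepsilon}w_{\mu'_i,\zeta'_i}^4 z_{ij}z_{il}
=\int_{\Omega_\varepsilon}h_n\, z_{ij}+\varepsilon^2\lambda\int_{\Omega_\varepsilon}\phi_n z_{ij}+\text{(interaction terms)},
\end{equation*}
an almost-diagonal linear system in the $c_{ij,n}$ with diagonal entries $\int w^4 z_{ij}^2$ bounded below, right-hand side of order $\|h_n\|_{**}+\varepsilon_n^2$, and off-diagonal interactions $o(1)$ because $|\zeta'_{i,n}-\zeta'_{j,n}|\ge \delta/\varepsilon\to\infty$.

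Next I would perform a blow-up near each concentration point. Set $\tilde\phi_{i,n}(z)=\phi_n(\zeta'_{i,n}+z)$; by the definition of $\|\cdot\|_*$ and interior elliptic regularity, $\tilde\phi_{i,n}$ is bounded in $C^1_{\mathrm{loc}}(\R^3)$, and along a subsequence converges locally uniformly to a bounded $\tilde\phi_i$ solving $\Delta\tilde\phi_i+5w_{\mu_i^*,0}^4\tilde\phi_i=0$ on $\R^3$ for some limiting $\mu_i^*\in[\delta,\delta^{-1}]$. The orthogonality conditions pass to the limit by dominated convergence (since $w^4z_{ij}\in L^1$), forcing $\tilde\phi_i$ to be orthogonal to all $z_{ij}^*$, hence $\tilde\phi_i\equiv 0$ by the non-degeneracy result recalled after \eqref{zij}. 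Thus $\phi_n\to 0$ uniformly on each ball $B_R(\zeta'_{i,n})$. To propagate this to the global $\|\cdot\|_*$ norm, I would set up the barrier $\Psi(y)=\sum_{i=1}^k(1+|y-\zeta'_{i,n}|)^{-\nu}$. Since $\nu\in(0,1)$, a direct computation shows $-\Delta\Psi\ge c_0\,\omega(y)^{2+\nu}$ outside $\bigcup_i B_R(\zeta'_{i,n})$ for $R$ large, while $\varepsilon^2\lambda\Psi$ and $5V^4\Psi$ are of strictly lower order in that region. The maximum principle applied to $\pm\phi_n-(\|h_n\|_{**}+o(1))\Psi$ in $\Omega_\varepsilon\setminus\bigcup_i B_R(\zeta'_{i,n})$, with boundary data controlled by the blow-up step and $\phi_n=0$ on $\partial\Omega_\varepsilon$, yields $|\phi_n(y)|=o(1)\omega(y)^\nu$ globally; gradient bounds follow from standard interior estimates, contradicting $\|\phi_n\|_*=1$.

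For existence, I would rewrite \eqref{linpart} as a functional equation in the Hilbert space $H=\{\phi\in H_0^1(\Omega_\varepsilon):\int w_{\mu'_i,\zeta'_i}^4 z_{ij}\phi=0\text{ for all }i,j\}$: inverting $-\Delta$ and projecting on $H$ produces $(I-K)\phi=\tilde h$ with $K$ compact. The a priori estimate forces injectivity, so Fredholm alternative gives the unique solution, and the bounds \eqref{cotas} follow. Schauder theory then provides $C^{2,\alpha}$ regularity when $h\in C^{0,\alpha}$. The principal difficulty I expect is the barrier construction in Step (b): one must choose the exponent $\nu\in(0,1)$ compatible both with the $\|\cdot\|_*$ norm and with $-\Delta(|y|^{-\nu})\ge c|y|^{-2-\nu}$, and verify that the zero-order term $5V^4\phi$, whose size decays only like $|y-\zeta'_i|^{-4}$ near each bubble, does not destroy the supersolution property; all other steps are routine adaptations of the arguments in \cite{DPDM}.
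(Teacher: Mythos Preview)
Your overall architecture matches the paper's: an a priori estimate by contradiction, followed by Fredholm's alternative in the orthogonal subspace of $H_0^1(\Omega_\varepsilon)$. The control of the $c_{ij}$ and the blow-up near each $\zeta'_{i,n}$ are essentially the same as in the paper, though the paper tests against compactly supported cut-offs $\eta(2\varepsilon|y-\zeta'_i|)\,z_{ij}$ rather than $z_{ij}$ itself; this spares one from estimating the boundary term $\int_{\partial\Omega_\varepsilon}(\partial_\nu\phi_n)\,z_{ij}$ that your integration by parts produces (small, but not addressed in your sketch).

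The genuine gap is in your barrier step. You assert that on $\Omega_\varepsilon\setminus\bigcup_i B_R(\zeta'_{i,n})$ the zero-order term $\varepsilon^2\lambda\Psi$ is of strictly lower order than $-\Delta\Psi$. This fails near $\partial\Omega_\varepsilon$. At a point $y$ with $|y-\zeta'_i|\sim c/\varepsilon$ for all $i$ (with $c$ comparable to $\mathrm{diam}\,\Omega$) one has
\[
-\Delta\Psi\sim\nu(1-\nu)\,(c/\varepsilon)^{-2-\nu},
\qquad
\varepsilon^2\lambda\,\Psi\sim\lambda\,\varepsilon^{2}\,(c/\varepsilon)^{-\nu},
\]
and the ratio is $\lambda c^2/\bigl(\nu(1-\nu)\bigr)$, a fixed positive constant depending on $\lambda$, $\Omega$ and $\nu$, not $o(1)$. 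There is no reason this ratio is below $1$, so $L\Psi\le 0$ is not guaranteed on the whole exterior and the comparison argument collapses. Relatedly, you never check that $L$ itself satisfies the maximum principle on that region.

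The paper handles this by splitting the exterior into two zones. On the far piece $\Omega_\varepsilon\setminus\bigcup_i B_{\delta/(4\varepsilon)}(\zeta'_{i,n})$ it rescales back to the fixed domain via $\psi_n(x)=\varepsilon_n^{-\nu}\phi_n(x/\varepsilon_n)$; the limit $\psi$ solves $\Delta\psi+\lambda\psi=0$ in $\Omega\setminus\{\zeta_1,\dots,\zeta_k\}$, $\psi=0$ on $\partial\Omega$, with $|\psi(x)|\le C\sum_i|x-\zeta_i|^{-\nu}$, so the singularities are removable and $\psi\equiv 0$ because $0<\lambda<\lambda_1$. This is exactly where the hypothesis $\lambda<\lambda_1$ enters and where the troublesome $\varepsilon^2\lambda$ term is absorbed. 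Only afterwards is the barrier $|y-\zeta'_i|^{-\nu}$ used, and then only on the intermediate annulus $B_{\delta/(4\varepsilon)}(\zeta'_i)\setminus B_M(\zeta'_i)$, where $|y-\zeta'_i|\le\delta/(4\varepsilon)$ forces $\varepsilon^2\lambda\,|y-\zeta'_i|^2\le\lambda\delta^2/16$, which is genuinely small for $\delta$ fixed small, and $L(|y-\zeta'_i|^{-\nu})\le-\tfrac12\nu(1-\nu)\,|y-\zeta'_i|^{-2-\nu}$ does hold.
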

Here and in the rest of this paper, we denote by $C$ a positive constant that may change from line to line but is always independent of $\varepsilon$.

For the proof of the previous proposition
we need the following a priori estimate:
\begin{lemma}
\label{lemmaCotaApriori}
Let $\delta>0$ be a given small number.
Assume the existence of sequences $(\varepsilon_n)_{n\in\mathbb{N}}$, $(\zeta^{\prime}_{i,n})_{n\in\mathbb{N}}$,, $(\mu^{\prime}_{i,n})_{n\in\mathbb{N}}$ such that $\varepsilon_n> 0$,
$\varepsilon_n\rightarrow 0$,
\[\vert \zeta^{\prime}_{i,n}-\zeta^{\prime}_{j,n} \vert >\frac{\delta}{\varepsilon_n}, \ i\not=j;
\quad
dist(\zeta^{\prime}_{i,n},\partial\Omega_{\varepsilon_n})>\frac{\delta}{\varepsilon_n}
\text{ and }
\delta<\mu^{\prime}_{i,n}<\delta^{-1},\ i=1,\ldots,k,
\]
and for certain functions $\phi_n$ and $h_n$ with
$\Vert h_n\Vert_{\ast\ast}\rightarrow 0$ and scalars
$c_{ij}^n$, $i=1,\ldots,k$, $j=1,2,3,4$, one has
\begin{equation}
\label{linpartn}
\left\{
\begin{array}{rlll}
L(\phi_n)&=&h_n+\sum_{i,j}c_{ij}^n\,w_{\mu_{i,n}^{\prime},\zeta_{i,n}^{\prime}}^4\,z_{ij}^n
&\text{in } \Omega_{\varepsilon_n},
\\
\phi_n&=&0&\text{on }\partial\Omega_{\varepsilon_n},
\\
\int_{\Omega_{\varepsilon_n}}w_{\mu_{i,n}^{\prime},\zeta_{i,n}^{\prime}}^4\,z_{ij}^n\,\phi_n&=&0&\text{for all  }i,j,
\end{array}
\right.
\end{equation}
where the functions $z_{ij}^n$
are defined as in \eqref{zij} for $\zeta_{i,n}^{\prime}$ and $\mu_{i,n}^{\prime}$.
Then
\[
\lim_{n\rightarrow\infty}\Vert \phi_n\Vert_{\ast}=0.
\]
\end{lemma}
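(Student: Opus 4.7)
The proof will be a standard contradiction/blow-up argument, which is the workhorse tool for a priori estimates in Lyapunov--Schmidt reductions. The plan is to assume $\|\phi_n\|_* \geq c_0 > 0$ along a subsequence, renormalize to $\tilde\phi_n := \phi_n/\|\phi_n\|_*$ so that $\|\tilde\phi_n\|_* = 1$, and derive a contradiction. Writing $\tilde h_n := h_n/\|\phi_n\|_*$ and $\tilde c_{ij}^n := c_{ij}^n/\|\phi_n\|_*$, the new functions still satisfy \eqref{linpartn} with $\|\tilde h_n\|_{**} \to 0$, and by the definition of $\|\cdot\|_*$ there exists $y_n \in \Omega_{\varepsilon_n}$ with
\[
\omega(y_n)^{-\nu}|\tilde\phi_n(y_n)| + \omega(y_n)^{-\nu-1}|\nabla\tilde\phi_n(y_n)| \geq \tfrac{1}{2}.
\]

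First I would estimate the multipliers $\tilde c_{ij}^n$. Testing \eqref{linpartn} against $z_{ij}^n$ (or rather against a suitable truncation $\eta_i z_{ij}^n$ supported near $\zeta_{i,n}'$), integrating by parts and using that $\Delta z_{ij}^n + 5 w_{\mu_{i,n}',\zeta_{i,n}'}^4 z_{ij}^n = 0$, one gets a linear system in the $\tilde c_{ij}^n$ whose matrix is diagonally dominant (by orthogonality $\int w^4 z_{ij}z_{il}=0$ for $j\neq l$, and because cross terms involving different $i$'s decay by the separation hypothesis $|\zeta'_{i,n}-\zeta'_{j,n}|>\delta/\varepsilon_n$). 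The right hand side is controlled by $\|\tilde h_n\|_{**}$ plus terms of the form $\varepsilon_n^2 \lambda \int \tilde\phi_n z_{ij}^n$, which by the definition of $\|\cdot\|_*$ and the decay of $z_{ij}^n$ tend to zero. This gives $\tilde c_{ij}^n \to 0$.

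Next comes the local analysis at each concentration point. Set $\hat\phi_{i,n}(z) := \tilde\phi_n(z + \zeta_{i,n}')$; since $\|\tilde\phi_n\|_* = 1$ the functions $\hat\phi_{i,n}$ are uniformly bounded on compacts, and satisfy
\[
\Delta \hat\phi_{i,n} + \varepsilon_n^2 \lambda \hat\phi_{i,n} + 5 V^4(\cdot + \zeta_{i,n}')\hat\phi_{i,n} = \tilde h_n(\cdot+\zeta_{i,n}') + \sum_{l} \tilde c_{il}^n w_{\mu_{i,n}',0}^4 z_{il}^n(\cdot+\zeta_{i,n}').
\]
Since the bubbles $w_{\mu_{j,n}',\zeta_{j,n}'}$ with $j\neq i$ go to zero locally near $\zeta_{i,n}'$ (by separation) and $\mu_{i,n}' \to \mu_i^\infty \in [\delta,\delta^{-1}]$, elliptic regularity gives a locally $C^1$ limit $\hat\phi_i$ on $\mathbb R^3$ solving $\Delta \hat\phi_i + 5 w_{\mu_i^\infty,0}^4\hat\phi_i = 0$ with $|\hat\phi_i(z)| \leq C(1+|z|)^{-\nu}$. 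By the nondegeneracy result cited from \cite{Rey}, $\hat\phi_i$ is a linear combination of the $z_{ij}$; the passage to the limit of the orthogonality conditions $\int w^4 z_{ij}^n \tilde\phi_n = 0$ then forces $\hat\phi_i \equiv 0$. Hence $\tilde\phi_n \to 0$ uniformly on each fixed ball around $\zeta_{i,n}'$.

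The main obstacle, and the final step, is to upgrade this local vanishing to vanishing in the weighted norm $\|\cdot\|_*$, which must hold at the global scale (including the far regions where $\omega(y)\sim \sum_i |y-\zeta_{i,n}'|^{-1}$). I would use a barrier argument: construct a positive function $W_n$ on $\Omega_{\varepsilon_n}$ of the form $W_n(y) = \sum_i (1+|y-\zeta_{i,n}'|)^{-\nu}$, which for $0<\nu<1$ satisfies $-\Delta W_n \geq c\,\omega(y)^{2+\nu}$ away from small fixed balls around the $\zeta_{i,n}'$ (and $5V^4 W_n$ is lower-order in the far region since $V^4 = O(|y-\zeta_i'|^{-4})$ there). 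On each small ball around $\zeta_{i,n}'$, $|\tilde\phi_n|$ is small by the previous step; on $\partial \Omega_{\varepsilon_n}$, $\tilde\phi_n = 0$. The maximum principle applied to $\pm\tilde\phi_n - (\text{osc on inner spheres}) W_n - C\|\tilde h_n\|_{**} W_n$ then yields $|\tilde\phi_n(y)| \leq o(1)\, W_n(y) \lesssim o(1)\,\omega(y)^\nu$, and a standard gradient bound via scaled interior Schauder estimates gives the corresponding control on $\omega^{-\nu-1}|\nabla\tilde\phi_n|$. This contradicts $\|\tilde\phi_n\|_* = 1$ and concludes the proof.
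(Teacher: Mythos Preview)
Your overall strategy matches the paper's: contradiction, normalize to $\|\tilde\phi_n\|_*=1$, test against truncated $z_{ij}$ to kill the $c_{ij}^n$, blow up near each $\zeta_{i,n}'$ and invoke nondegeneracy plus the orthogonality conditions, then a barrier for the far region. The first three steps are essentially what the paper does.

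The barrier step, however, has a genuine gap. You propose to apply the maximum principle for $L = \Delta + \varepsilon_n^2\lambda + 5V^4$ on all of $\Omega_{\varepsilon_n}\setminus\bigcup_i B_M(\zeta_{i,n}')$, and you verify that $5V^4 W_n$ is lower-order there --- but you never address the term $\varepsilon_n^2\lambda\, W_n$. This term is \emph{not} lower-order in the outer part of the domain: for $|y-\zeta_{i,n}'|$ of order $\varepsilon_n^{-1}$ (which does occur, since $\mathrm{diam}(\Omega_{\varepsilon_n}) \sim \varepsilon_n^{-1}$) one has
\[
\frac{\varepsilon_n^2\lambda\,|y-\zeta_{i,n}'|^{-\nu}}{|y-\zeta_{i,n}'|^{-(2+\nu)}}
\;=\; \varepsilon_n^2\lambda\,|y-\zeta_{i,n}'|^2 \;\sim\; \lambda\,(\mathrm{diam}\,\Omega)^2,
\]
which in general is not dominated by $\nu(1-\nu)\leq \tfrac14$. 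Hence $L(W_n)\leq 0$ may fail near $\partial\Omega_{\varepsilon_n}$, the comparison principle is unavailable there, and the conclusion $|\tilde\phi_n|\leq o(1)\,W_n$ on the whole complement of the small balls does not follow.

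The paper resolves this by inserting an extra step \emph{before} the barrier. It rescales back to the fixed domain via $\psi_n(x)=\varepsilon_n^{-\nu}\phi_n(x/\varepsilon_n)$ and shows that $\psi_n$ converges on $\overline\Omega\setminus\{\zeta_1,\dots,\zeta_k\}$ to a solution $\psi$ of $-\Delta\psi-\lambda\psi=0$ in $\Omega$ with zero boundary data; the bound $|\psi|\leq C\sum_i|x-\zeta_i|^{-\nu}$ makes the singularities removable, and since $\lambda<\lambda_1$ this forces $\psi\equiv 0$. That yields $|\phi_n(y)|=o(1)\,\varepsilon_n^\nu$ on the outer region $\Omega_{\varepsilon_n}\setminus\bigcup_i B_{\delta/(4\varepsilon_n)}(\zeta_{i,n}')$. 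Only then is the barrier $|y-\zeta_{i,n}'|^{-\nu}$ used, and only on each intermediate annulus $B_{\delta/(4\varepsilon_n)}(\zeta_{i,n}')\setminus \overline{B_M(\zeta_{i,n}')}$, where $\varepsilon_n^2\lambda\,|y-\zeta_{i,n}'|^2\leq \lambda\delta^2/16$ is small (recall $\delta$ is fixed small), so the supersolution property does hold. The outer rescaling step is precisely where the hypothesis $\lambda<\lambda_1$ enters; your argument as written does not use it, and without it the proof is incomplete.
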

\begin{proof}
Arguing by contradiction, we may assume that $\Vert \phi_n\Vert_{\ast}=1$.
We shall establish first the weaker assertion that
\[
\lim_{n\rightarrow\infty}\Vert \phi_n\Vert_{\infty}=0.
\]
Let us assume, for contradiction, that except possibly for a subsequence
\begin{equation}
\label{contra}
\lim_{n\rightarrow\infty}\Vert \phi_n\Vert_{\infty}=\gamma,\quad\text{ with }0<\gamma\leq 1.
\end{equation}
We consider a cut-off function $\eta\in C^{\infty}(\R)$ with
\[
\eta(s)\equiv 1 \quad\text{for } s\leq \frac{\delta}{2},\quad
\eta(s)\equiv 0 \quad\text{for } s\geq \delta.
\]
We define
\begin{equation}
\label{zklcortada}
{\bf z}_{kl}^n(y):=\eta(2\,\varepsilon_n\,\vert y-\zeta_{k,n}^{\prime}\vert)\,z_{kl}^n(y).
\end{equation}
Testing \eqref{linpartn} against ${\bf z}_{kl}^n$ and integrating by parts twice we get the following relation
\[
\sum_{i,j}c_{ij}^n\,
\int_{\Omega_{\varepsilon_n}}
w_{\mu_{i,n}^{\prime},\zeta_{i,n}^{\prime}}^4\,z_{ij}^n\,{\bf z}_{kl}^n
=
\int_{\Omega_{\varepsilon_n}}
L({\bf z}_{kl}^n)\,\phi_n-\int_{\Omega_{\varepsilon_n}}h_n\,{\bf z}_{kl}^n.
\]
Since $z_{kl}^n$ lies on the kernel of
\[
L_{k}:=\Delta +5w_{\mu_k^{\prime},\zeta_k^{\prime}}^4,
\]
writing $L({\bf z}_{kl}^n)=L({\bf z}_{kl}^n)-L_k(z_{kl}^n)$,
it is easy to check that
\[
\Bigl\vert
\int_{\Omega_{\varepsilon_n}}
L({\bf z}_{kl}^n)\,\phi_n
\Bigr\vert =o(1)\, \Vert \phi_n\Vert_{\ast}\quad\text{for } l=1,2,3,4.
\]
To obtain the last estimate, we take into account the effect of the Laplace operator on the cut-off function $\eta$ which is used to define ${\bf z}_{kl}^n$ and the effect of the difference between the two potentials $V^4$ and $w_{\mu_k^{\prime},\zeta_k^{\prime}}^4$ which appear respectively in the definition of $L$ and $L_{k}$.

On the other hand, a straightforward computation yields
\[
\Bigl\vert
\int_{\Omega_{\varepsilon_n}}h_n\,{\bf z}_{kl}^n
\Bigr\vert \leq C\, \Vert h_n\Vert_{\ast\ast}.
\]
Finally, since
\[
\int_{\Omega_{\varepsilon_n}}
w_{\mu_{i,n}^{\prime},\zeta_{i,n}^{\prime}}^4\,z_{ij}^n\,{\bf z}_{kl}^n=C\,\delta_{i,k}\,\delta_{j,l}+o(1) \quad \text{with } \delta_{i,k}=
\left\{
\begin{array}{ll}
1 &\text{ if }i=k\\
0 &\text{ if }i\neq k,
\end{array}
\right.
\]
we conclude that
\[
\lim_{n\rightarrow\infty}c_{ij}^n=0, \quad \text{for all }i,j.
\]
Now, let $y_n \in \Omega_{\varepsilon_n}$ be such that $\phi_n(y_n)=\gamma$, so that $\phi_n$ attains its absolute maximum value at this point.
Since $\Vert \phi_n\Vert_{\ast}=1,$ there is a radius $R>0$ and $i\in\{1,\ldots,k\}$ such that, for $n$ large enough,
\[\vert y_n-\zeta_{i,n}^\prime\vert\leq R
.\]
Defining $\tilde{\phi}_n(y)=\phi_n(y+\zeta_{i,n}^\prime)$ and using elliptic estimates together with Ascoli-Arzela's theorem, we have that, up to a subsequence,  $\tilde{\phi}_n$ converges uniformly  over compacts to a nontrivial bounded solution $\tilde{\phi}$ of
\begin{equation*}
\left\{
\begin{array}{rlll}
-\Delta \,\tilde{\phi}+5\,w_{\mu_{i}^{\prime},0}^4\,\tilde{\phi}&=&0&\text{in }\R^3,
\\
\int_{\R^3}w_{\mu_{i}^{\prime},0}^4\,z_{0,j}\tilde{\phi}&=&0 &\text{for }j=1,2,3,4,
\end{array}
\right.
\end{equation*}
which is bounded by a constant times $\vert y\vert^{-1}$.
Here
$z_{0,j}$ is defined as
in \eqref{zij} taking $\zeta_{i}^{\prime}=0$ and $\mu_{i}^{\prime}:=\lim_{n\rightarrow\infty}\mu_{i,n}^{\prime}$ (up to subsequence).
From the assumptions, it follows that $\delta\leq \mu_{i}^{\prime}\leq \delta^{-1}$.

Now, taking into account that
the solution $w_{\mu_{i}^{\prime},0}$ is nondegenerate, the above implies that
$\tilde{\phi}=\sum_{j=1}^4 \alpha_j\,z_{0,j}(y)$
and then, from the orthogonality conditions
we can deduce that $\alpha_j=0$ for $j=1,2,3,4.$
From here we obtain $\tilde{\phi}\equiv 0$, which contradicts \eqref{contra}.
This proves that $\lim_{n\rightarrow\infty}\Vert \phi_n\Vert_{\infty}=0$.

Next we shall establish that $\Vert \phi_n\Vert_{\normadora}\rightarrow 0$ where
\begin{align*}
\Vert \phi \Vert_{\normadora}
&=\sup_{y\in\Omega_\epsilon}
\omega(y)^{-\nu}\,\vert \phi(y)\vert .
\end{align*}
Defining
\[
\psi_n(x)=\frac{1}{\varepsilon_n^\nu}\,\phi_n\Bigl(\frac{x}{\varepsilon_n}\Bigr),\quad x\in\Omega
\]
we have that $\psi_n$ satisfies
\begin{equation*}
\left\{
\begin{array}{rllll}
\Delta \,\psi_n+\lambda \,\psi_n&=&\varepsilon_n^{-(2+\nu)}\Bigl\{
&-5 \varepsilon_n^{1/2}
\left( \varepsilon_n^{1/2}
\sum_{i=1}^k
U_{\mu_{i,n},\zeta_{i,n}}\right)^4\,\varepsilon_n^{\nu}\,\psi_n\\
&&&+
g_n
+\sum_{i,j}c_{ij}^n\,\varepsilon_n^{2}\,w_{\mu_{i,n},\zeta_{i,n}}^4\,Z_{ij}^n\Bigr
\}
&\text{in } \Omega,
\\
\psi_n&=&0&&\text{on }\partial\Omega,
\end{array}
\right.
\end{equation*}
where $\mu_{i,n}=\varepsilon_n\,\mu_{i,n}^{\prime}$,
$\zeta_{i,n}=\varepsilon_n\,\zeta_{i,n}^{\prime}$,
$g_n(x)=h_n\bigl(\frac{x}{\varepsilon_n}\bigr)$ and $Z_{ij}^n(x)=z_{ij}^n\bigl(\frac{x}{\varepsilon_n}\bigr)$.

Let $\zeta_i\in\Omega$ be such that, after passing to a subsequence, $\vert\zeta_{i,n}-\zeta_i\vert\leq\frac{\delta}{4}$ for all $n\in\mathbb{N}$. Notice that, by the assumptions, $B_{\frac{\delta}{4}}(\zeta_i)\subset\Omega$ and $B_{\frac{\delta}{4}}(\zeta_i)\cap B_{\frac{\delta}{4}}(\zeta_j)=\emptyset$ for $i\not=j$.
From the assumption $\Vert \phi_n\Vert_{*} =  1$ we deduce that
\begin{equation*}
\vert \psi_n(x)\vert\leq \biggl(
\sum_{i=1}^k
\frac{1}{\varepsilon_n+\vert x-\zeta_{i,n}\vert} \biggr)^{\nu} ,
\quad \forall x\in \Omega.
\end{equation*}
Since $\lim_{n\rightarrow\infty}\Vert h_n\Vert_{\ast\ast}\rightarrow 0$,
\[
\vert g_n(x)\vert\leq o(1)\,\varepsilon_n^{2+\nu}\,\biggl(
\sum_{i=1}^k
\frac{1}{\varepsilon_n+\vert x-\zeta_{i,n}\vert}
\biggr)^{2+\nu}
\quad \text{for }x\in \Omega.
\]
From Lemma \ref{Uiexp} we know that,
away from
$\zeta_{i,n}$,
\[
U_{\mu_{i,n},\zeta_{i,n}}(x)=C\,\varepsilon_{n}^{1/2}\,(1+o(1))\,G_{\lambda}(x,\zeta_{i,n}).
\]
Moreover, it is easy to see that also away from
$\zeta_{i,n}$,
\[
\varepsilon_n^{-\nu}\,
\sum_{j=1}^4 c_{ij}^n\,w_{\mu_{i,n},\zeta_{i,n}}^4\,Z_{ij}^n=o(1)\quad \text{as } \varepsilon_n\rightarrow 0,
\]
and so, a diagonal convergence argument allows us to conclude that $\psi_n(x)$ converges uniformly over compacts of $\bar{\Omega}\setminus\{\zeta_1,\ldots,\zeta_k\}$ to $\psi(x)$, a solution of
\[
-\Delta \,\psi+\lambda \,\psi=0
\quad\text{in } \Omega\setminus\{\zeta_1,\ldots,\zeta_k\},
\quad
\psi=0\quad\text{on }\partial\Omega ,
\]
which satisfies
\begin{equation*}
\vert \psi(x)\vert\leq \biggl(
\sum_{i=1}^k
\frac{1}{\vert x-\zeta_{i,n}\vert} \biggr)^{\nu} ,
\quad \forall x\in \Omega.
\end{equation*}
Thus $\psi$ has a removable singularity at all $\zeta_i$, $i=1,\ldots,k$, and we conclude that $\psi(x)=0$. Hence, over compacts of $\bar{\Omega}\setminus\{\zeta_1,\ldots,\zeta_k\}$,  $\vert \psi_n(x)\vert=o(1).$
In particular, this implies that, for all
$x\in\Omega\setminus
\bigl( \cup_{i=1}^k
B_{\frac{\delta}{4}}(\zeta_{i,n})  \bigr)$,
$
\vert \psi_n(x)\vert\leq o(1).
$
Thus we have
\begin{equation}
\label{57}
\vert \phi_n(y)\vert\leq o(1)\,\varepsilon_n^\nu,
\quad \text{for all } y\in \Omega_{\varepsilon_n}\setminus\Bigl(
\bigcup_{i=1}^k
B_{\frac{\delta}{4\varepsilon_n}}(\zeta_{i,n}^\prime)
\Bigr).
\end{equation}
Now, consider a fixed number $M$, such that $M<\frac{\delta}{4\,\varepsilon_n}$, for all $n\in\mathbb{N}$.

\noindent
Since $\Vert \phi_n\Vert_\infty=o(1)$,
\begin{equation}
\label{58}
\bigl(1+|y-\zeta_{i,n}^\prime|\bigr)^{\nu}
\vert \phi_n(y)\vert\leq o(1) \quad\text{for all }
y\in \overline{B_{M}(\zeta_{i,n}^\prime)}.
\end{equation}
We claim that
\begin{equation}
\label{59}
\bigl(1+|y-\zeta_{i,n}^\prime|\bigr)^{\nu}
\vert \phi_n(y)\vert\leq o(1) \quad\text{for all }
y\in A_{\varepsilon_n,M},
\end{equation}
where
 $A_{\varepsilon_n,M}:=B_{\frac{\delta}{4\,\varepsilon_n}}(\zeta_{i,n}^\prime)\setminus\overline{B_{M}(\zeta_{i,n}^\prime)}$.

The proof of this assertion relies on the fact that the operator $L$ satisfies the weak maximum principle in $A_{\varepsilon_n,M}$ in the following sense: if $u$ is bounded, continuous in $\overline{A_{\varepsilon_n,M}}$, $u\in H^1(A_{\varepsilon_n,M})$ and satisfies $L(u)\geq 0$ in $A_{\varepsilon_n,M}$ and $u\leq 0$ in $\partial\,A_{\varepsilon_n,M},$ then, choosing a larger $M$ if necessary, $u\leq 0$ in $A_{\varepsilon_n,M}$. We remark that this result is just a consequence of the fact that $L(\vert y-\zeta_{i,n}^\prime\vert^{-\nu})\leq 0$ in $A_{\varepsilon_n,M}$ provided that $M$ is large enough but independent of $n$.

\noindent
Next, we shall define an appropriate
barrier function. First we
observe that there exists $\eta_n^1\rightarrow 0$, as $\varepsilon_n\rightarrow 0$, such that
\begin{equation}
\label{Lphi}
\vert y-\zeta_{i,n}^{\prime}\vert^{2+\nu}\,\vert L(\phi_n)\vert\leq \eta_n^1 \quad
\text{in } A_{\varepsilon_n,M}.
\end{equation}
On the other hand, from
\eqref{57} we deduce the
existence of $\eta_n^2\rightarrow 0$, as $\varepsilon_n\rightarrow 0$, such that
\begin{equation}
\label{extrad}
\varepsilon_n^{-\nu}\vert \phi_n(y)\vert\leq \eta_n^2 \quad \text{ if } \vert y-\zeta_{i,n}^\prime\vert =\delta/4\varepsilon_n,
\end{equation}
and from \eqref{58} we deduce the existence of $\eta_n^3\rightarrow 0$, as $\varepsilon_n\rightarrow 0$, such that
\begin{equation}
\label{intrad}
M^{\nu}\vert \phi_n(y)\vert\leq \eta_n^3, \quad \text{if } \vert y-\zeta_{i,n}^\prime\vert =M.
\end{equation}
Setting $\eta_n=\max\{ \eta_n^1, \eta_n^2, \eta_n^3\}$ we find that
the function
\[
\varphi_n(y)=\eta_n\,\vert y-\zeta_{i,n}^{\prime}\vert^{-\nu}
\]
can be used for the intended comparison argument.

Indeed, for each $i=1,\ldots,k$ we can write
\begin{align*}
L(\vert y-\zeta_{i,n}^{\prime}\vert^{-\nu})
&=-\Bigl(
\nu\,(1-\nu)-\bigl(\varepsilon_n^2\,\lambda
+5(V_1+V_2)^4\bigr)\,\vert y-\zeta_{i,n}^{\prime}\vert^2
\Bigr)
\,\vert y-\zeta_{i,n}^{\prime}\vert^{-(2+\nu)}\\
&\leq -\frac{\nu\,(1-\nu)}{2} \,\vert y-\zeta_{i,n}^{\prime}\vert^{-(2+\nu)}
\end{align*}
provided $\vert y-\zeta_{i,n}^{\prime}\vert$ is large enough,
and then
\begin{equation*}
L(\varphi_n)\leq -\frac{\nu\,(1-\nu)}{2} \,\,\eta_n\,\vert y-\zeta_{i,n}^{\prime}\vert^{-(2+\nu)} \quad
\text{in } A_{\varepsilon_n,M}
\end{equation*}
provided $M$ is fixed large enough (independently of $n$).
This together with \eqref{Lphi} yields that $
\vert L(\phi_n)\vert\leq - C L(\varphi_n)
$ in $A_{\varepsilon_n,M}$. Moreover, it follows from
\eqref{extrad} and \eqref{intrad} that
$
\vert \phi_n(y)\vert\leq C \varphi_n(y)
$ on $\partial \,A_{\varepsilon_n,M}$
and thus the maximum principle allows us to conclude that \eqref{59} holds.

Thus, we have shown that $\|\phi_n \|_{\normadora} \to 0$ as $n\to\infty$.
A standard argument using an appropriate scaling and  elliptic estimates  shows that $\|\phi\|_*\to 0$ as $n\to \infty$, which contradicts the assumption  $\Vert \phi_n \Vert_*=1$.
\end{proof}

\begin{proof}[Proof of Proposition \ref{solvability}]
Let us consider the space:
\[
H=\Bigl\{
\phi\in H_0^1(\Omega_\varepsilon): \int_{\Omega_\varepsilon}w_{\mu_i^{\prime},\zeta_i^{\prime}}^4\,z_{ij}\,\phi=0, \, i=1,\ldots,k,\, j=1,2,3,4
\Bigr\}
\]
endowed with the inner product:
\[
[\phi,\psi]=\int_{\Omega_\varepsilon}
\nabla \phi \cdot\nabla \psi
-\varepsilon^2\,\lambda \int_{\Omega_\varepsilon}
\phi \,\psi.
\]
Problem \eqref{linpart} expressed in weak form is equivalent to that of finding a $\phi\in H$ such that
\[
[\phi,\psi]=\int_{\Omega_\varepsilon}\Bigl[5(V_1+V_2)^4 \phi -h-\Bigr]\,\psi\quad\text{for all }\psi\in H.
\]
With the aid of Riesz’s representation theorem, this equation gets rewritten in $H$ in the operational form $\phi = K(\phi) + \tilde{h}$, for certain $\tilde{h}\in H$, where $K$ is a compact operator in $H$.
Fredholm's alternative guarantees unique solvability of this problem for any $\tilde{h}$ provided that the homogeneous equation $\phi=K(\phi)$ has only the zero solution in $H$. Let us observe that this last equation is precisely equivalent to \eqref{linpart} with $h=0$. Thus existence of a unique solution follows.
Estimate \eqref{cotas} can be deduced from Lemma~\ref{lemmaCotaApriori}.
\end{proof}

It is important, for later purposes, to understand the differentiability of the operator
$T:h\mapsto \phi$ with respect to the variables $\mu_i^{\prime}$ and $\zeta_i^{\prime}$, $i=1,\ldots,k$,
for $\varepsilon$ fixed. That is, only the parameters $\mu_i$ and $\zeta_i$ are allowed to vary.

\begin{prop}
\label{propDerLinearOp}
Let $\mu^{\prime}:=(\mu_1^{\prime},\ldots,\mu_k^{\prime})$ and $\zeta^{\prime}:=(\zeta_1^{\prime},\ldots,\zeta_k^{\prime})$.
Under the conditions of Proposition \ref{solvability}, the map $T$ is of class $C^1$ and the derivative $D_{\mu^{\prime},\,\zeta^{\prime}}\,D_{\mu^{\prime}}T$  exists and is a continuous function.
Besides, we have
\[
\Vert D_{\mu^{\prime},\,\zeta^{\prime}}T(h)\Vert_{\ast}
+\Vert D_{\mu^{\prime},\,\zeta^{\prime}}\,D_{\mu^{\prime}}T(h)\Vert_{\ast}\leq C
\Vert h\Vert_{\ast\ast}.
\]
\end{prop}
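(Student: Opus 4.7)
The plan is to combine the implicit function theorem in weighted Hölder spaces with an application of Proposition~\ref{solvability} to the formally differentiated linear system. Consider the map
\[
F(\mu',\zeta',\phi,c) = \Bigl(L(\phi) - \sum_{i,j} c_{ij}\,w^4_{\mu'_i,\zeta'_i}\,z_{ij},\; \Bigl(\int_{\Omega_\varepsilon} w^4_{\mu'_i,\zeta'_i}\,z_{ij}\,\phi\Bigr)_{i,j}\Bigr),
\]
defined between Hölder spaces endowed with $\Vert\cdot\Vert_{*}$ and $\Vert\cdot\Vert_{**}$. By Proposition~\ref{solvability}, $D_{(\phi,c)} F$ is an isomorphism at the solution, and $F$ depends smoothly on $(\mu',\zeta')$ since both $w_{\mu'_i,\zeta'_i}$ and $z_{ij}$ do. The implicit function theorem yields the required smoothness of $(\phi(\mu',\zeta';h), c(\mu',\zeta';h))$ in the parameters, so that $T\in C^{1}$ and the mixed derivative $D_{\mu',\zeta'}D_{\mu'}T$ is well defined and continuous. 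The non-trivial content of the statement is then the uniform-in-$\varepsilon$ bound on these derivatives.

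To obtain the first-order estimate, I would differentiate \eqref{linpart} formally. Fix $\ell$, and set $\dot\phi=\partial_{\mu'_\ell}\phi$, $\dot c_{ij}=\partial_{\mu'_\ell} c_{ij}$. Since only $w_{\mu'_\ell,\zeta'_\ell}$ and $V_\ell$ depend on $\mu'_\ell$, one gets
\[
L(\dot\phi) = \tilde h + \sum_{i,j} \dot c_{ij}\,w^4_i\,z_{ij}, \qquad \int w^4_i\,z_{ij}\,\dot\phi = -\delta_{i\ell}\int \partial_{\mu'_\ell}(w^4_\ell\,z_{\ell j})\,\phi,
\]
with $\tilde h := -5(\partial_{\mu'_\ell} V^4)\phi - \sum_{j} c_{\ell j}\,\partial_{\mu'_\ell}(w^4_\ell\,z_{\ell j})$. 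Decomposing $\dot\phi=\psi+\sum_{k,l}\beta_{kl}\,z_{kl}$ with the $\beta_{kl}$ chosen so that $\psi$ obeys the original orthogonality conditions, the coefficients $\beta_{kl}$ solve a finite-dimensional linear system whose matrix is, to leading order, diagonal with bounded nonzero entries (by orthogonality of the $z_{ij}$ against $w^4_i$) and whose right-hand side is bounded by $C\Vert\phi\Vert_{*}\leq C\Vert h\Vert_{**}$; hence $\vert\beta_{kl}\vert\leq C\Vert h\Vert_{**}$. Then $\psi$ solves a problem of the form \eqref{linpart} with right-hand side $\hat h = \tilde h - L\bigl(\sum_{k,l}\beta_{kl}\,z_{kl}\bigr)$. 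Since $\partial_{\mu'_\ell}V_\ell$ is essentially $z_{\ell,4}$, which decays like $(1+|y-\zeta'_\ell|)^{-1}$ and since $\omega$ is bounded, the factor $(\partial_{\mu'_\ell}V^4)\phi$ is dominated by $C\,\omega(y)^{2+\nu}\Vert\phi\Vert_{*}$; combined with $\vert c_{ij}\vert\leq C\Vert h\Vert_{**}$ from Proposition~\ref{solvability}, this gives $\Vert\hat h\Vert_{**}\leq C\Vert h\Vert_{**}$. A further invocation of Proposition~\ref{solvability} yields $\Vert\dot\phi\Vert_{*}\leq C\Vert h\Vert_{**}$, and the same argument works for derivatives with respect to the components of $\zeta'_\ell$.

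For $D_{\mu',\zeta'}D_{\mu'}T$, I would iterate the above procedure on the system satisfied by $\dot\phi$. Differentiating once more produces yet another linear problem of the form \eqref{linpart}, whose right-hand side involves $(\partial^{2} V^{4})\phi$, $(\partial V^{4})\dot\phi$, and analogous products of $c$'s and $\dot c$'s with first and second derivatives of $w^{4}_i\,z_{ij}$. The first-order bound for $\Vert\dot\phi\Vert_{*}$ just established allows one to bound all such terms in $\Vert\cdot\Vert_{**}$ uniformly in $\varepsilon$, and Proposition~\ref{solvability} closes the argument. The main technical obstacle throughout is the $\varepsilon$-uniform control of the weighted norms $\Vert(\partial V^{4})\phi\Vert_{**}$ and of their higher-order analogues; ultimately this rests on the pointwise observation that parameter derivatives of $V_i$ of any order decay, in the rescaled variable, no worse than $(1+|y-\zeta'_i|)^{-1}$, so differentiating never worsens the decay and the weighted estimates close.
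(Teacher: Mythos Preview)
Your argument follows the paper's closely: differentiate \eqref{linpart} formally, subtract a finite linear combination of kernel-type functions to restore the orthogonality conditions, bound the resulting right-hand side in $\Vert\cdot\Vert_{**}$, and feed the corrected function back into Proposition~\ref{solvability}. One technical point deserves care, however: the functions $z_{kl}$ do not vanish on $\partial\Omega_\varepsilon$, so $\psi=\dot\phi-\sum_{k,l}\beta_{kl}\,z_{kl}$ fails the Dirichlet boundary condition in \eqref{linpart} and therefore cannot be written as $T(\hat h)$. The paper sidesteps this by using the compactly supported cut-offs $\mathbf{z}_{ml}$ from \eqref{zklcortada} in place of $z_{kl}$; the extra contribution $L(\mathbf{z}_{ml})$ to $\hat h$ is then $o(1)$ in $\Vert\cdot\Vert_{**}$ (exactly as verified in the proof of Lemma~\ref{lemmaCotaApriori}), and with this substitution your argument closes as written.
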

\begin{proof}
Let us begin with differentiation with respect to $\zeta^\prime$. Since $\phi$ solves problem \eqref{linpart},
formal differentiation yields that $X_{n}:=\partial_{(\zeta^\prime)_n}\phi$, $n=1,\ldots,3k$, should satisfy
\begin{equation*}
L(X_{n})=
-5\,\bigl[ \partial_{(\zeta^\prime)_n}    V^4\bigl]\,\phi
+\sum_{i,j}c_{ij}^n\,w_{\mu_i^{\prime},\zeta_i^{\prime}}^4\,z_{ij}
+\sum_{i,j}c_{ij}\,\partial_{(\zeta^\prime)_n}\bigl[w_{\mu_i^{\prime},\zeta_i^{\prime}}^4\,z_{ij}\bigr]
\quad\text{in } \Omega_\varepsilon
\end{equation*}
together with
\begin{equation}
\label{ort}
\int_{\Omega_\varepsilon}X_{n}\,w_{\mu_i^{\prime},\zeta_i^{\prime}}^4\,z_{ij}+\int_{\Omega_\varepsilon}\phi\,\partial_{(\zeta^\prime)_n}\bigl[w_{\mu_i^{\prime},\zeta_i^{\prime}}^4\,z_{ij}\bigr]=0\quad\text{for   }j=1,2,3,4 ,
\end{equation}
where $c_{ij}^n=\partial_{(\zeta^\prime)_n}c_{ij}$.

\noindent
Let us consider constants $b_{ml}$
such that
\[
\int_{\Omega_\varepsilon}
\Bigl(
X_{n}-\sum_{m,l}b_{ml} \,{\bf z}_{ml}
\Bigr)\,w_{\mu_i^{\prime},\zeta_i^{\prime}}^4\,z_{ij}=0,
\]
where ${\bf z}_{ml}$ is defined in \eqref{zklcortada}.
From \eqref{ort} we get
\begin{equation*}
\sum_{m,l}b_{ml}
\int_{\Omega_\varepsilon}w_{\mu_i^{\prime},\zeta_i^{\prime}}^4\,z_{ij}\, {\bf z}_{ml}=-\int_{\Omega_\varepsilon}\partial_{(\zeta^\prime)_n}\bigl[w_{\mu_i^{\prime},\zeta_i^{\prime}}^4\,z_{ij}\bigr]\,\phi
\end{equation*}
for  $i=1,\ldots,k$, $j=1,2,3,4$.
Since this system is diagonal dominant with uniformly bounded coefficients, we see that it is uniquely solvable and that
\[
b_{ml}=O(\Vert \phi\Vert_\ast)
\]
uniformly on $\zeta^\prime$, $\mu^\prime$ in $\Omega_\varepsilon$.
On the other hand, it is not hard to check that
\[
\bigl\Vert \phi\,\partial_{(\zeta^\prime)_n} V^4 \bigr\Vert_{\ast\ast}\leq C\,
\Vert \phi\Vert_\ast.
\]
Recall now that from Proposition \ref{solvability} $c_{i,j}=O(\Vert h\Vert_{\ast\ast})$. Since besides
\[\Bigl\vert\partial_{(\zeta^\prime)_n}\bigl[w_{\mu_i^{\prime},\zeta_i^{\prime}}^4\, z_{ij}(x)\bigr]\Bigr\vert\leq C\, \bigl\vert y-\zeta^\prime_i\bigr\vert^{-7},\]
we get
\[\Bigl\Vert
\sum_{i,j}c_{ij}\,\partial_{(\zeta^\prime)_n}\bigl[
w_{\mu_i^{\prime},\zeta_i^{\prime}}^4\, z_{ij}
\bigr]\Bigr\Vert_{\ast\ast}\leq C\,
\Vert h\Vert_{\ast\ast}.
\]
Setting $X=X_{n}-\sum_{m,l}b_{ml} \,{\bf z}_{ml}$, we have that $X$ satisfies
\[
L(X)=f+\sum_{i,j}c_{ij}^n\,w_{\mu_i^{\prime},\zeta_i^{\prime}}^4\,z_{ij}\quad\text{in }\Omega_\varepsilon,
\]
where
\[
f=\sum_{m,l}b_{ml} \,L({\bf z}_{ml})-5\,\phi\,\partial_{(\zeta^\prime)_n}V^4
+\sum_{i,j}c_{ij}\,\partial_{(\zeta^\prime)_n}\bigl[w_{\mu_i^{\prime},\zeta_i^{\prime}}^4\,z_{ij}\bigr].
\]
The above estimates, together with the fact that $\Vert \phi\Vert_\ast\leq C\,
\Vert h\Vert_{\ast\ast}$ implies that
\[\Vert f \Vert_{\ast\ast}\leq C\,
\Vert h\Vert_{\ast\ast}.\]
Moreover, since $X\in H_0^1(\Omega)$
 and
\[
\int_{\Omega_\varepsilon}
X\,w_{\mu_i^{\prime},\zeta_i^{\prime}}^4\,z_{ij}=0
\quad\text{for all }i,j,
\]
we have that $X=T(f)$.
This computation is not just formal. Indeed,
arguing directly by definition, one gets that
\[
\partial_{(\zeta^\prime)_n}\phi=\sum_{m,l}b_{ml} \,{\bf z}_{ml}+T(f)\quad\text{and}\quad
\Vert \partial_{(\zeta^\prime)_n}\phi\Vert_{\ast}\leq C \,
\Vert h\Vert_{\ast\ast}.
\]
The corresponding result for differentiation with respect to the $\mu_i$'s  follows similarly. This concludes the proof.
\end{proof}

\section{The nonlinear problem}
\label{sectNonlinear}

In this section we consider the nonlinear problem
\eqref{linearizedproblemproj}, namely,
\begin{equation}
\label{linearizedproblemproj2}
\left\{
\begin{array}{rlll}
L(\phi)&=&-N(\phi)-E+\sum_{i,j}c_{ij}\,w_{\mu_i^{\prime},\zeta_i^{\prime}}^4\,z_{ij}
&\text{in } \Omega_\varepsilon,
\\
\phi&=&0&\text{on }\partial\Omega_\varepsilon,
\\
\int_{\Omega_\varepsilon}w_{\mu_i^{\prime},\zeta_i^{\prime}}^4\,z_{ij}\,\phi&=&0&\text{for all  }i,j ,
\end{array}
\right.
\end{equation}
and show that it has a small solution $\phi$ for $\varepsilon>0$ small enough.

We first obtain an estimate of the error $E$ defined in \eqref{error}.
Assuming \eqref{parametros} it is possible to show that $E$ satisfies
$
\|E\|_{**} \leq C \varepsilon.
$
However, for the proof of the main theorem, we require a stronger estimate.  In order to find it, we need  to impose certain extra assumptions on the parameters.

Let us use the notation
\[
\mu^{\frac{1}{2}} =
\left[
\begin{matrix}
\mu_1^{\frac{1}{2}}
\\
\vdots
\\
\mu_k^{\frac{1}{2}}
\end{matrix}
\right] \in \R^k.
\]
%

\begin{lemma}
\label{lemma-error}
Assuming that the parameters $\mu_i,\zeta_i$ satisfy \eqref{parametros}, where $\delta>0$ is fixed small, we have the existence of
$\varepsilon_1>0$, $C>0$, such that for all $\varepsilon \in (0,\varepsilon_1)$
\[
\| E \|_{**} \leq C ( \varepsilon^{\frac{1}{2}} |M_\lambda(\zeta) \mu^{\frac{1}{2}}| + \varepsilon^2) .
\]
\end{lemma}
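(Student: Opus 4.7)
My plan is to split $\Omega_\varepsilon$ into the near-field pieces $B_i=B_{\rho/\varepsilon}(\zeta_i^\prime)$ with $\rho<\delta/4$ and the far-field region $\mathcal{O}=\Omega_\varepsilon\setminus\bigcup_i B_i$, estimate $E$ separately on each, and identify the leading non-trivial contribution as $5w_{\mu_i^\prime,\zeta_i^\prime}^4$ times the $i$-th coordinate of $-4\pi\alpha_3\varepsilon^{1/2}M_\lambda(\zeta)\mu^{1/2}$. On $\mathcal{O}$, by Lemma~\ref{Uiexp} each $V_i(y)=\varepsilon^{1/2}U_i(\varepsilon y)=O(\varepsilon^{1/2}\mu_i^{1/2})=O(\varepsilon)$ and likewise $w_{\mu_i^\prime,\zeta_i^\prime}(y)\leq C\mu_i^{\prime\,1/2}/|y-\zeta_i^\prime|=O(\varepsilon)$. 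Thus $|E|=O(\varepsilon^5)$ while $\omega(y)^{-(2+\nu)}\leq C\varepsilon^{-(2+\nu)}$, so the contribution to $\|E\|_{**}$ is $O(\varepsilon^{3-\nu})=o(\varepsilon^{2})$ for $\nu<1$.

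The work is in the near-field. On $B_i$ I write $V=V_i+Q_i$ with $Q_i=\sum_{j\neq i}V_j$ and use the binomial expansion
\[
E=(V_i+Q_i)^5-w_i^5-\sum_{j\neq i}w_j^5
=(V_i^5-w_i^5)+5V_i^4Q_i+\sum_{k=2}^{5}\binom{5}{k}V_i^{5-k}Q_i^{k}-\sum_{j\neq i}w_j^5.
\]
Setting $\varphi_i(y):=\varepsilon^{1/2}\pi_i(\varepsilon y)$ and Taylor expanding, $V_i^5-w_i^5=5w_i^4\varphi_i+O(w_i^3\varphi_i^2)$ and $5V_i^4Q_i=5w_i^4Q_i+O(w_i^3\varphi_iQ_i)$, so the principal part is $5w_i^4(\varphi_i+Q_i)$. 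Using Lemma~\ref{lemma22} together with the expansion of $H_\lambda(\zeta_i+\mu_iz,\zeta_i)$ recorded in the previous section,
\[
\varphi_i(y)=-4\pi\alpha_3\,\varepsilon^{1/2}\mu_i^{1/2}\,g_\lambda(\zeta_i)+O\bigl(\varepsilon^{3/2}\mu_i^{1/2}|y-\zeta_i^\prime|\bigr)+O(\varepsilon^{1/2}\mu_i^{3/2}),
\]
while Lemma~\ref{Uiexp} together with $G_\lambda(\varepsilon y,\zeta_j)=G_\lambda(\zeta_i,\zeta_j)+O(\varepsilon|y-\zeta_i^\prime|)$ for $j\neq i$ gives
\[
V_j(y)=4\pi\alpha_3\,\varepsilon^{1/2}\mu_j^{1/2}\,G_\lambda(\zeta_i,\zeta_j)+O\bigl(\varepsilon^{3/2}\mu_j^{1/2}|y-\zeta_i^\prime|\bigr)+O(\varepsilon^{1/2}\mu_j^{5/2-\sigma}).
\]
Summing and using $\mu_j=\varepsilon\mu_j^\prime$ with $\delta<\mu_j^\prime<\delta^{-1}$, the leading terms combine precisely to
\[
\varphi_i(y)+Q_i(y)=-4\pi\alpha_3\,\varepsilon^{1/2}\bigl[M_\lambda(\zeta)\mu^{1/2}\bigr]_i+O(\varepsilon^{2}|y-\zeta_i^\prime|)+O(\varepsilon^{2}).
\]
This identification of the cancellation that builds $M_\lambda(\zeta)\mu^{1/2}$ is the main point; everything else is bookkeeping in the weighted norm.

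To finish, I estimate each piece in the $\|\cdot\|_{**}$ norm on $B_i$. Since $\omega(y)^{-(2+\nu)}\leq C(1+|y-\zeta_i^\prime|)^{2+\nu}$ on $B_i$ and $w_{\mu_i^\prime,\zeta_i^\prime}(y)^4\leq C(1+|y-\zeta_i^\prime|)^{-4}$, one has $w_i^4\omega^{-(2+\nu)}\leq C(1+|y-\zeta_i^\prime|)^{\nu-2}\leq C$, so the main term contributes $O\bigl(\varepsilon^{1/2}|M_\lambda(\zeta)\mu^{1/2}|\bigr)$. For the correction $O(\varepsilon^{2}|y-\zeta_i^\prime|)$ multiplied by $w_i^4$ one gets $\leq C\varepsilon^{2}(1+|y-\zeta_i^\prime|)^{\nu-1}\leq C\varepsilon^{2}$ since $\nu<1$. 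The remaining quadratic-and-higher Taylor remainders $w_i^3\varphi_i^2$, $w_i^3\varphi_iQ_i$, together with all $V_i^{5-k}Q_i^k$ for $k\geq 2$, are pointwise bounded by $Cw_i^{5-k}\varepsilon^{k}$ (using $\varphi_i,Q_i=O(\varepsilon)$) and thus contribute $O(\varepsilon^{2})$ in $\|\cdot\|_{**}$, as does $\sum_{j\neq i}w_j^5=O(\varepsilon^{5})$ on $B_i$. Collecting these estimates with the far-field bound proves
\[
\|E\|_{**}\leq C\bigl(\varepsilon^{1/2}|M_\lambda(\zeta)\mu^{1/2}|+\varepsilon^{2}\bigr).
\]

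\textbf{Main obstacle.} The delicate point is not the tail-analysis but spotting the exact algebraic cancellation between the $-g_\lambda(\zeta_i)$ coming from the boundary-correction $\pi_i$ and the $+G_\lambda(\zeta_i,\zeta_j)$ coming from the neighbouring $V_j$'s; these leading constants must be tracked with the correct normalisations ($4\pi\alpha_3$, $\mu_i^{1/2}$) so that the result collapses into a single coordinate of $M_\lambda(\zeta)\mu^{1/2}$. Once that cancellation is pinned down, controlling the higher-order Taylor remainders in the weighted $\|\cdot\|_{**}$ norm is a careful but routine check that each offending term picks up enough extra powers of $\varepsilon$ (through $\mu_i=O(\varepsilon)$) to fit inside the $\varepsilon^{2}$ budget.
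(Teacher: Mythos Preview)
Your proof is correct and follows essentially the same approach as the paper: the same near/far-field decomposition, the same use of Lemmas~\ref{lemma22} and~\ref{Uiexp} to expand $\varphi_i$ and $V_j$ on $B_i$, and the same identification of the leading term $5w_i^4(\varphi_i+Q_i)$ with $-20\pi\alpha_3\varepsilon^{1/2}w_i^4[M_\lambda(\zeta)\mu^{1/2}]_i$. Your version is in fact slightly more careful in tracking the $|y-\zeta_i'|$ dependence of the lower-order remainders before absorbing them into the weighted norm, but the structure and key cancellation are identical to the paper's argument.
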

\begin{proof}
We recall that
\begin{align*}
E(y)=\Bigl(
\sum_{i=1}^k
\bigl[
w_{\mu_i^{\prime},\zeta_i^{\prime}}(y)+\varepsilon^{\frac{1}{2}}\pi_i(\varepsilon\, y)
\bigr]
\Bigr)^5
- \sum_{i=1}^k
w_{\mu_i^{\prime},\zeta_i^{\prime}}^5(y)
,\quad y\in \Omega_{\varepsilon} .
\end{align*}
First we note that
\[
|E(y)|\leq C \varepsilon^5,\quad \text{if }
y \in \widetilde \Omega_\varepsilon := \Omega_\varepsilon \setminus \bigcup_{j=1}^k B_{\delta/\varepsilon}(\zeta_j') ,
\]
and this implies that
\begin{align}
\label{EcomplementB}
\sup_{y \in \widetilde \Omega_\varepsilon}  \omega(y)^{-(2+\nu)}   |E(y)| \leq C \varepsilon^{5-\nu}.
\end{align}
For $y \in B_{\delta/\varepsilon}(\zeta_i'))$ and $j\not=i$, thanks to Lemma~\ref{lemma22} we have
\[
\varepsilon^{\frac{1}{2}}\pi_i(\varepsilon\, y)  = O(\varepsilon), \quad
w_{\mu_j^{\prime},\zeta_j^{\prime}}(y)+\varepsilon^{\frac{1}{2}}\pi_j(\varepsilon\, y)
= O(\varepsilon).
\]
Hence, using Taylor's theorem  and the fact that $\mu_i = O ( \varepsilon )$ (which follows from \eqref{parametros}), we find that
\begin{align}
\nonumber
E(y)
& = 5  w_{\mu_i^{\prime},\zeta_i^{\prime}}(y)^4
\Bigl( \varepsilon^{\frac{1}{2}}\pi_i(\varepsilon\, y)
+ \sum_{j\not=i} w_{\mu_j^{\prime},\zeta_j^{\prime}}(y)+\varepsilon^{\frac{1}{2}}\pi_j(\varepsilon\, y)
\Bigr) \\
\label{expansionE}
& \quad + O( w_{\mu_i^{\prime},\zeta_i^{\prime}}(y)^3  \varepsilon^2) + O(\varepsilon^5)
,\quad \text{for } y \in B_{\delta/\varepsilon}(\zeta_i').
\end{align}
Now, Lemma~\ref{lemma22} guarantees that, for $y \in B_{\delta/\varepsilon}(\zeta_i')$,
\begin{align}
\label{pi}
\pi_i(\varepsilon y)
&= -4\pi \alpha_3  \mu_i^{\frac{1}{2}} H_\lambda(\varepsilon y,\zeta_i)
+O( \mu_i^{\frac{3}{2}} )
=-4\pi \alpha_3  \mu_i^{\frac{1}{2}} g_\lambda(\zeta_i)
+O( \varepsilon^{\frac{3}{2}} ) .
\end{align}
Similarly, Lemma~\ref{Uiexp} yields that, for $y \in B_{\delta/\varepsilon}(\zeta_i'))$ and $j\not=i$,
\begin{align}
\nonumber
w_{\mu_j^{\prime},\zeta_j^{\prime}}(y)+\varepsilon^{\frac{1}{2}}\pi_j(\varepsilon\, y)
&=  V_j(y) = \varepsilon^{ \frac{1}{2}} U_j( \varepsilon y)
\\
\nonumber
& = 4\pi\,\alpha_3 \varepsilon^{ \frac{1}{2}}  \mu_j^{\frac{1}{2}} \,G_{\lambda}(\varepsilon y ,\zeta_j)+O(\mu_i^{\frac{5}{2}-\sigma} )
\\
\label{vJ}
&=  4\pi\,\alpha_3\varepsilon^{ \frac{1}{2}}  \mu_j^{\frac{1}{2}}\,G_{\lambda}(\zeta_i ,\zeta_j) + O(\varepsilon^2).
\end{align}
Using \eqref{expansionE}, along with \eqref{pi} and \eqref{vJ}, we find that
\begin{align}
\nonumber
E(y)
& = 20\pi \alpha_3 \varepsilon^{\frac{1}{2}}
w_{\mu_i^{\prime},\zeta_i^{\prime}}(y)^4
\Bigl(
-\mu_i^{\frac{1}{2}} g_\lambda(\zeta_i)
+\sum_{j\not=i} \mu_j^{\frac{1}{2}} G_\lambda(\zeta_i,\zeta_j)
\Bigr) \\
\label{expansionE2}
& \quad + O( w_{\mu_i^{\prime},\zeta_i^{\prime}}(y)^3  \varepsilon^2) + O(\varepsilon^5) ,
\quad \text{for } y \in B_{\delta/\varepsilon}(\zeta_i')),
\end{align}
which implies
\[
\sup_{y \in B_{\delta/\varepsilon}(\zeta_i'))}
\omega(y)^{-(2+\nu)}   |E(y)|
\leq C \varepsilon^{\frac{1}{2}}
\bigl|
- \mu_i^{\frac{1}{2}} g_\lambda(\zeta_i)  + \sum_{j\not=i} \mu_j^{\frac{1}{2}} \,G_{\lambda}(\zeta_i ,\zeta_j)
\bigr| + C \varepsilon^2.
\]
This together with \eqref{EcomplementB} yields the desired estimate.
\end{proof}

We note that just assuming that $\mu_i$, $\zeta_i$ satisfy \eqref{parametros} we have $|M_\lambda(\zeta) \mu^{\frac{1}{2}}| \leq C \varepsilon^{\frac{1}{2}}$ and hence
\begin{align}
\label{estE1}
\| E \|_{**}\leq C \varepsilon.
\end{align}
However, this  estimate is not sufficient to prove the main theorem. An essential part of the argument is to work with $\zeta$ and $\mu$ so that  $M_\lambda(\zeta) \mu^{\frac{1}{2}} $  is smaller than $\varepsilon^{\frac{1}{2}}$.

\medskip

\begin{lemma}
\label{lema2}
Assume that $\zeta_i'$, $\mu_i'$ satisfy \eqref{parametros} where $\delta>0$ is fixed small.
Then there exist
$\varepsilon_1>0$, $C_1>0$, such that for all $\varepsilon \in (0,\varepsilon_1)$ problem \eqref{linearizedproblemproj2} has a unique solution $\phi$ that satisfies
\begin{align}
\label{estPhi}
\|\phi\|_*\leq C ( \varepsilon^{\frac{1}{2}} |M_\lambda(\zeta) \mu^{\frac{1}{2}}| + \varepsilon^2) .
\end{align}
\end{lemma}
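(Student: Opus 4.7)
The plan is to recast \eqref{linearizedproblemproj2} as a fixed point problem and apply the Banach contraction principle in a ball of the Banach space
\[
\mathcal{H}_\varepsilon = \Bigl\{ \phi \in C^{2,\alpha}(\Omega_\varepsilon)\cap H_0^1(\Omega_\varepsilon) : \int_{\Omega_\varepsilon} w_{\mu_i',\zeta_i'}^4 z_{ij}\,\phi = 0 \text{ for all } i,j \Bigr\}
\]
endowed with the $\|\cdot\|_*$ norm. Since Proposition~\ref{solvability} provides a bounded linear operator $T:(C^{0,\alpha},\|\cdot\|_{**}) \to (\mathcal{H}_\varepsilon,\|\cdot\|_*)$, the problem \eqref{linearizedproblemproj2} is equivalent to the fixed point equation
\[
\phi = \mathcal{A}(\phi) := -\,T\bigl(N(\phi) + E\bigr).
\]
Set $\rho_\varepsilon := \varepsilon^{1/2}\,|M_\lambda(\zeta)\,\mu^{1/2}| + \varepsilon^2$. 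By Lemma~\ref{lemma-error}, $\|E\|_{**} \leq C\rho_\varepsilon$, so the natural search space is the closed ball $B_R := \{\phi \in \mathcal{H}_\varepsilon : \|\phi\|_* \leq R\rho_\varepsilon\}$ for a large constant $R$ to be fixed.

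The core step is to derive two estimates for the purely algebraic nonlinearity $N(\phi) = (V+\phi)^5 - V^5 - 5V^4\phi$. Using the pointwise bound $|N(\phi)| \leq C\bigl(V^3\phi^2 + |\phi|^5\bigr)$ together with the observation that, since \eqref{parametros} forces $\mu_i' = O(1)$, one has $V(y) \leq C\,\omega(y)$ on $\Omega_\varepsilon$, I would show
\[
\|N(\phi)\|_{**} \leq C\,\|\phi\|_*^2, \qquad \|N(\phi_1) - N(\phi_2)\|_{**} \leq C\bigl(\|\phi_1\|_* + \|\phi_2\|_*\bigr)\|\phi_1 - \phi_2\|_*,
\]
whenever $\|\phi\|_*, \|\phi_j\|_* \leq 1$. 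Indeed, the worst term gives $\omega^{-(2+\nu)}V^3\phi^2 \leq C\,\omega^{1+\nu}\|\phi\|_*^2$, which is uniformly bounded, and the quintic term is smaller since $\nu \in (0,1)$.

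Combining these with $\|T(h)\|_* \leq C\|h\|_{**}$, for $\phi \in B_R$ I obtain
\[
\|\mathcal{A}(\phi)\|_* \leq C\|N(\phi)\|_{**} + C\|E\|_{**} \leq C R^2 \rho_\varepsilon^2 + C\rho_\varepsilon \leq R\rho_\varepsilon,
\]
provided $R$ is fixed large enough and $\varepsilon$ is small enough so that $R\rho_\varepsilon \to 0$. Similarly,
\[
\|\mathcal{A}(\phi_1)-\mathcal{A}(\phi_2)\|_* \leq C\bigl(\|\phi_1\|_* + \|\phi_2\|_*\bigr)\|\phi_1-\phi_2\|_* \leq 2CR\rho_\varepsilon \,\|\phi_1-\phi_2\|_*,
\]
which makes $\mathcal{A}$ a contraction on $B_R$ for $\varepsilon$ small. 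The Banach fixed point theorem then produces a unique solution $\phi \in B_R$, giving both existence and the bound \eqref{estPhi}. The uniqueness in $\mathcal{H}_\varepsilon$ (beyond $B_R$) follows from the a~priori estimate of Proposition~\ref{solvability} applied to the difference of two solutions.

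The main technical obstacle is the nonlinear estimate in the weighted norm: one must verify that the decay encoded in $\omega$ precisely balances the local blow-up of $V$ near each $\zeta_i'$, which relies crucially on $\mu_i' = O(1)$ and on the choice $\nu \in (0,1)$. Everything else is a routine application of the linear theory of Section~\ref{sectLinear}.
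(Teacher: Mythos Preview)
Your proposal is correct and follows essentially the same approach as the paper: recast \eqref{linearizedproblemproj2} as the fixed point equation $\phi = -T(N(\phi)+E)$, use the quadratic estimate $\|N(\phi)\|_{**}\leq C\|\phi\|_*^2$ and the Lipschitz estimate for $N$, and apply the contraction mapping principle in a ball of radius comparable to $\|E\|_{**}$, invoking Lemma~\ref{lemma-error} for the final bound. One small caveat: your closing remark that uniqueness in all of $\mathcal{H}_\varepsilon$ follows from Proposition~\ref{solvability} applied to the difference is not quite right---that argument still requires both solutions to be small, so it only yields uniqueness within a small ball, which is exactly what the lemma asserts.
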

\begin{proof}
In order to find a solution to problem \eqref{linearizedproblemproj2} it is sufficient to solve the fixed point problem
\[
\phi= A(\phi),
\]
where
\begin{align}
\label{def-A}
A(\phi) = - T(N(\phi)+E) ,
\end{align}
and  $T$ is the linear operator defined in Proposition \ref{solvability}.

Now, for a small $\gamma>0$, let us consider the ball
$\mathcal{F}_\gamma:= \{\phi \in C(\overline{\Omega}_{\varepsilon}) \, \vline \ \|\phi\|_*\leq \gamma  \}.$
We shall prove that
$A$ is a contraction in $ \mathcal{F}_\gamma$ for small $\varepsilon>0$.
From Proposition \ref{solvability}, we get
\[ \|A(\phi)\|_*\leq C\left[ \|N(\phi)\|_{**}+\|E\|_{**}   \right].\]
Writing the formula for $N$ as
\[
N(\phi)=20\int_{0}^1(1-t)\,[V+t\phi]^3\,dt \,\phi^2,
\]
we get the following estimates which are valid for
$\phi_1, \phi_2\in \mathcal{F}_\gamma$,
\[
\|N(\phi_1)\|_{**}\leq C \|\phi_1\|_*^2 ,
\]
\begin{equation}
\label{N}
\|N(\phi_1)-N(\phi_2)\|_{**}\leq C \,\gamma  \,\|\phi_1-\phi_2\|_*.
\end{equation}
Thus, we can deduce the existence of a constant $C>0$ such that
\[
\|A(\phi)\|_*\leq C \left[\gamma^2 + \|E\|_{**} \right] .
\]
From Lemma~\ref{lemma-error} we obtain the basic estimate $\|E\|_{**} \leq C\varepsilon$ with $C$ independent of the parameters $(\mu,\zeta)$ satisfying \eqref{parametros}.
Choosing $\gamma = 2 C \|E \|_{**} $ we see that $A$ maps $\mathcal{F}_\gamma$ into itself if $\gamma\leq \frac{1}{2C}$, which is true for $\varepsilon>0$ small.
Using now \eqref{N} we obtain
\[
\|A(\phi_1) - A(\phi_2) \|_* \leq C \gamma  \|\phi_1-\phi_2\|_*
\]
for $\phi_1,\phi_2 \in \mathcal{F}_\gamma$. Therefore $A$ is a contraction in $ \mathcal{F}_\gamma$ for small $\varepsilon>0$ and hence a unique fixed point of $A$ exists in this ball.
The solution $\phi$ satisfies
\begin{align}
\label{estPhiNonlinear}
\|\phi\|_* \leq \gamma = 2 C \|E\|_{**} \leq  C ( \varepsilon^{\frac{1}{2}} |M_\lambda(\zeta) \mu^{\frac{1}{2}}| + \varepsilon^2) ,
\end{align}
by Lemma~\ref{lemma-error}.
This concludes the proof of the lemma.
\end{proof}

\medskip

We shall next analyze the differentiability of the map $(\zeta',\mu')\rightarrow \phi$.

First we claim that:
\begin{lemma}
Assume that the parameters $\mu_i,\zeta_i$ satisfy \eqref{parametros}. Then
\begin{align}
\label{estDerEMu}
\| D_{\mu_i'} E \|_{**} \leq C \varepsilon ,
\\
\label{estDerEZeta}
\| D_{\zeta_i^\prime}\, E \|_{**} \leq C \varepsilon .\end{align}
\end{lemma}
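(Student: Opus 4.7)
The plan is to follow the proof of Lemma~\ref{lemma-error} while tracking the effect of the differentiation, exploiting that each factor in $E$ differentiates into something with comparable size or better. Writing $w_i = w_{\mu_i',\zeta_i'}$ and $V = \sum_j V_j$, the starting point is the identity
\begin{equation*}
D_{\mu_i'} E \;=\; 5 V^4 \, D_{\mu_i'} V_i \;-\; 5 w_i^4 \, D_{\mu_i'} w_i
\;=\; 5 w_i^4 \,\varepsilon^{1/2} D_{\mu_i'}[\pi_i(\varepsilon y)] \;+\; 5 (V^4 - w_i^4)\, D_{\mu_i'} V_i,
\end{equation*}
and an analogous identity for $D_{\zeta_i'} E$. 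Since $\mu_i=\varepsilon\mu_i'$ and $\zeta_i=\varepsilon\zeta_i'$, differentiation with respect to the inner parameters produces a factor $\varepsilon$ which will be essential in controlling the $\pi$-terms.

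First I would handle the region $\widetilde\Omega_\varepsilon = \Omega_\varepsilon\setminus\bigcup_j B_{\delta/\varepsilon}(\zeta_j')$, where every $w_j(y)$ and $\varepsilon^{1/2}\pi_j(\varepsilon y)$ is $O(\varepsilon)$. Here both $D_{\mu_i'} V_i$ and $D_{\zeta_i'} V_i$ remain $O(\varepsilon)$ (for the $w_i$ contribution, $D_{\mu_i'}w_i=z_{i,4}$ and $D_{\zeta_i'}w_i=z_{i,j}$, which all decay like $\varepsilon$ far from $\zeta_i'$; for the $\pi$ contribution, use Lemma~\ref{lemma22} after multiplying by the extra $\varepsilon$ coming from the chain rule). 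Combined with $V^4 = O(\varepsilon^4)$ and $w_i^4 = O(\varepsilon^4)$ in this region, the contribution is at worst $O(\varepsilon^{5-\nu})$, much better than needed.

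Next I would work inside $B_{\delta/\varepsilon}(\zeta_i')$. For the first piece $5 w_i^4\,\varepsilon^{1/2} D_{\mu_i'}[\pi_i(\varepsilon y)]$, the chain rule gives $\varepsilon^{1/2} D_{\mu_i'}[\pi_i(\varepsilon y)] = \varepsilon^{3/2}\partial_{\mu_i}\pi_i(\varepsilon y)$; differentiating the expansion of Lemma~\ref{lemma22} and using its uniform derivative bounds (with the $\mu_i^n$ factor) shows $\partial_{\mu_i}\pi_i(\varepsilon y) = O(\mu_i^{-1/2}) = O(\varepsilon^{-1/2})$, so the whole factor is $O(\varepsilon)$. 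Since $w_i^4(y)\leq C\,(1+|y-\zeta_i'|)^{-4}$ and $\omega(y)^{-(2+\nu)}(1+|y-\zeta_i'|)^{-4}$ stays bounded, this yields $\omega^{-(2+\nu)}\,|\,\cdot\,|\leq C\varepsilon$. For the second piece $5(V^4-w_i^4)\,D_{\mu_i'} V_i$, the expansions \eqref{pi}--\eqref{vJ} give $V - w_i = O(\varepsilon)$ inside $B_{\delta/\varepsilon}(\zeta_i')$, hence $V^4 - w_i^4 = O(\varepsilon\,w_i^3)$, while $D_{\mu_i'} V_i = z_{i,4} + O(\varepsilon)$ satisfies $|D_{\mu_i'} V_i(y)|\leq C(1+|y-\zeta_i'|)^{-1} + C\varepsilon$. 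Multiplying out produces $O(\varepsilon\,(1+|y-\zeta_i'|)^{-4})$ which again gives the $O(\varepsilon)$ bound in the $\|\cdot\|_{**}$ norm.

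The argument for $D_{\zeta_i'} E$ is identical, using that $D_{\zeta_i'}w_i=(z_{i,j})_{j=1,2,3}$ has the same decay as $z_{i,4}$, and that the corresponding derivative bounds for $\pi_i$ from Lemma~\ref{lemma22} (case $m=1$) give the same scaling after accounting for the chain-rule factor $\varepsilon$. The main technical point—and really the only one worth being careful about—is that differentiating $\pi_i(\varepsilon y)$ with respect to the fast variables $\mu_i'$, $\zeta_i'$ produces a harmless extra $\varepsilon$ against the at-worst $\varepsilon^{-1/2}$ blow-up of $\partial_{\mu_i}\pi_i$ guaranteed by Lemma~\ref{lemma22}. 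Notice that we make no attempt to capture cancellations of the type in Lemma~\ref{lemma-error}; the weaker bound $O(\varepsilon)$ is sufficient here, which is why the proof is essentially a direct size count.
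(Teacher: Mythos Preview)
Your proposal is correct and follows essentially the same strategy as the paper: split $D E$ into a piece carrying $V^4-w_i^4$ and a piece carrying the derivative of the correction $\varepsilon^{1/2}\pi_i(\varepsilon y)$, then size-count each inside and outside the concentration balls using Lemma~\ref{lemma22} and the expansions \eqref{pi}--\eqref{vJ}. The only cosmetic differences are that the paper groups $D_{\zeta_i'} w_i$ (rather than $D_{\zeta_i'} V_i$) with the $V^4-w_i^4$ factor, exploiting the slightly sharper pointwise bound $|D_{\zeta_i'} w_i|\le C w_i^2$, and records $|D_{\zeta_i'}[\varepsilon^{1/2}\pi_i(\varepsilon y)]|\le C\varepsilon^2$ instead of your $O(\varepsilon)$; neither refinement is needed for the stated $O(\varepsilon)$ conclusion.
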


\begin{proof}

First we observe that
\begin{align*}
\partial_{\mu_i^\prime}w_{\mu_i',\zeta_i'}&=
\frac{\alpha_3 \,\bigl(|y-\zeta_i'|^2-\mu_i'^2\bigr)}{2\,\sqrt{\mu_i'}\,\bigl(|y-\zeta_i'|^2+\mu_i'^2\bigr)^{\frac{3}{2}}}, \quad
D_{\zeta_i^\prime}w_{\mu_i',\zeta_i'}=
\frac{\alpha_3\, \sqrt{\mu_i'}\,(y-\zeta_i')}{\bigl(|y-\zeta_i'|^2+\mu_i'^2\bigr)^{\frac{3}{2}}} .
\end{align*}
and hence
\begin{equation}
\label{DMUZIE}
|\partial_{\mu_i^\prime}w_{\mu_i',\zeta_i'}|\leq C\, w_{\mu_i',\zeta_i'} \quad\text{and}\quad
|D_{\zeta_i^\prime}\,w_{\mu_i',\zeta_i'}|\leq C\, w_{\mu_i',\zeta_i'}^2.
\end{equation}
Let us prove \eqref{estDerEZeta}, the other being similar.
Let us assume without loss of generality that $i=1$.
Recall that
\[
E=V^5-
\sum_{i=1}^k   w_{\mu_i^{\prime},\zeta_i^{\prime}}^5,
\]
and so
\begin{align*}
D_{\zeta_1^\prime}\, E
&=
5V^4 D_{\zeta_1^\prime}\, V_1
- 5 w_{\mu_1',\zeta_1'}^4  D_{\zeta_1^\prime}\, w_{\mu_1',\zeta_1'}
\\
&=
5
\biggl[
\Bigl(
\sum_{i=1}^k w_{\mu_i',\zeta_i'} + \varphi_i
\Bigr)^4
-
w_{\mu_1',\zeta_1'}^4 \biggr]
D_{\zeta_1^\prime}\,w_{\mu_1',\zeta_1'}
 +
5\,\Bigl(
\sum_{i=1}^k w_{\mu_i',\zeta_i'} + \varphi_i
\Bigr)^4  \, D_{\zeta_1^\prime}\,  \varphi_1 ,
\end{align*}
where $\varphi_i(y) = \varepsilon^{1/2} \pi_i(\varepsilon y)$.
By \eqref{DMUZIE}, we have that, for $y \in B_{\delta/\varepsilon}(\zeta_1')$,
\begin{align}
\nonumber
&
\biggl|
\biggl(
\Bigl(
\sum_{i=1}^k w_{\mu_i',\zeta_i'} + \varphi_i
\Bigr)^4
-
w_{\mu_1',\zeta_1'}^4 \biggr)\,
D_{\zeta_1'} w_{\mu_1',\zeta_1'}
\biggr|
\\
\nonumber
& \leq
C w_{\mu_1',\zeta_1'}^3 \Bigl( | \varphi_1 |+ \sum_{i=2}^k \bigl( |w_{\mu_i',\zeta_i'}| + | \varphi_i| \bigr) \Bigr) \, |D_{\zeta_1'} w_{\mu_1',\zeta_1'} |
\\
\label{estDerE1}
& \leq C \,\varepsilon \, w_{\mu_1',\zeta_1'}^5  .
\end{align}
Note that from Lemma~\ref{lemma22}, $ |D_{\zeta_1'}  \varphi_1 (y)|\leq C \varepsilon^2$. Then,
 for  $y \in B_{\delta/\varepsilon}(\zeta_1')$,
\begin{align}
\nonumber
\left|
5 \Bigl(
\sum_{i=1}^k w_{\mu_i',\zeta_i'} + \varphi_i
\Bigr)^4   \,D_{\zeta_1'}  \varphi_1
\right|
& \leq C \left( w_{\mu_1',\zeta_1'}^4 + \varepsilon^4 \right) \varepsilon^2
\\
\label{estDerE2}
& \leq C \,\varepsilon^2\, w_{\mu_1',\zeta_1'}^4.
\end{align}
Using \eqref{estDerE1} and \eqref{estDerE2} we find that
\[
\sup_{y \in B_{\delta/\varepsilon}(\zeta_1')}
\omega(y)^{-(2+\nu)} |D_{\zeta_1'} E(y)| \leq C \varepsilon.
\]
The supremum on the rest of $\Omega_\varepsilon$ can be estimated similarly and this yields \eqref{estDerEZeta}.
\end{proof}


\begin{lemma}
Assume that $\zeta$, $\mu$ satisfy \eqref{parametros}.
Then
\begin{align}
\label{derPhiZeta}
\|  D_{\zeta_i'} \phi\|_* & \leq C (  \| E\|_{**} + \|D_{\zeta'} E\|_{**} ) ,
\\
\label{derPhiMu}
\|  D_{\mu_i'} \phi\|_*&  \leq C (  \| E\|_{**} + \|D_{\mu'} E\|_{**} ) .
\end{align}
\end{lemma}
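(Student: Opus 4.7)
The natural approach is to implicitly differentiate the fixed-point identity $\phi = A(\phi) = -T(N(\phi)+E)$ established in the proof of Lemma~\ref{lema2}. I'll detail the argument for $D_{\zeta_i'}\phi$; the estimate for $D_{\mu_i'}\phi$ is analogous. Writing $X = D_{\zeta_i'}\phi$, the chain rule applied to $\phi = -T(N(\phi)+E)$, bearing in mind that $T$, $N(\phi)$ and $E$ all depend on the parameters, yields
\[
X + T\bigl(\mathcal{N}'(\phi)[X]\bigr) = -(\partial_{\zeta_i'}T)\bigl(N(\phi)+E\bigr) - T\bigl(\partial_{\zeta_i'}N(\phi)\bigr) - T\bigl(D_{\zeta_i'}E\bigr),
\]
where $\mathcal{N}'(\phi)[\psi] = 5[(V+\phi)^4 - V^4]\,\psi$ is the derivative in $\phi$, and $\partial_{\zeta_i'}N(\phi) = 5[(V+\phi)^4 - V^4 - 4V^3\phi]\,D_{\zeta_i'}V$ collects the terms coming from $V$.

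To solve this linear equation for $X$, I would first observe that $\|\mathcal{N}'(\phi)[\psi]\|_{\ast\ast} \leq C\|\phi\|_\ast\,\|\psi\|_\ast = o(1)\|\psi\|_\ast$ by \eqref{estPhi} and the pointwise bound $[(V+\phi)^4-V^4]\psi = O(V^3 \phi\,\psi)$, so by Proposition~\ref{solvability} the operator $I + T\circ \mathcal{N}'(\phi)$ is a small perturbation of the identity on the space $\{\psi : \|\psi\|_\ast<\infty,\ \int w_{\mu_i',\zeta_i'}^4 z_{ij}\psi=0\}$ and is invertible by Neumann series with uniformly bounded inverse. It remains to bound each piece of the right-hand side in $\|\cdot\|_\ast$: by Proposition~\ref{propDerLinearOp}, $\|(\partial_{\zeta_i'}T)(N(\phi)+E)\|_\ast \leq C(\|N(\phi)\|_{\ast\ast}+\|E\|_{\ast\ast}) \leq C\|E\|_{\ast\ast}$; using $(V+\phi)^4 - V^4 - 4V^3\phi = O(V^2\phi^2)$ together with $|D_{\zeta_i'}V|\leq C\,V^2$ pointwise gives $\|\partial_{\zeta_i'}N(\phi)\|_{\ast\ast}\leq C\|\phi\|_\ast^2\leq C\|E\|_{\ast\ast}^2$; and Proposition~\ref{solvability} yields $\|T(D_{\zeta_i'}E)\|_\ast \leq C\|D_{\zeta_i'}E\|_{\ast\ast}$. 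Summing gives exactly \eqref{derPhiZeta}.

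Two points of care remain. First, the formal differentiation must be justified: this is standard via the implicit function theorem applied to the map $F(\phi;\mu',\zeta') := \phi + T(N(\phi)+E)$ in the Banach space defined by $\|\cdot\|_\ast$ and the (parameter-dependent) orthogonality constraints; the derivative $D_\phi F(\phi;\cdot) = I + T\circ \mathcal{N}'(\phi)$ is invertible by the argument above, so $C^1$-regularity of $(\mu',\zeta')\mapsto \phi$ follows together with the identity we used. Second, one must handle the fact that the orthogonality conditions depend on $(\mu',\zeta')$; concretely, when differentiating, $X$ no longer satisfies $\int w^4 z_{ij}X=0$, but one adjusts by subtracting a suitable linear combination $\sum b_{ml}\mathbf{z}_{ml}$ of the cut-off kernels (as in the proof of Proposition~\ref{propDerLinearOp}), where the coefficients $b_{ml}=O(\|\phi\|_\ast)$ can be absorbed into the final estimate. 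The main obstacle is not any single step but keeping track of this parameter dependence of $T$ and of the orthogonality conditions, which has already been carried out in Proposition~\ref{propDerLinearOp}; invoking it reduces the present lemma to a routine contraction argument.
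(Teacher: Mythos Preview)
Your proposal is correct and follows essentially the same route as the paper: both differentiate the fixed-point identity $\phi = -T(N(\phi)+E)$, split the resulting equation into the contribution from $\partial_{\zeta_i'}T$, from $\partial_{\zeta_i'}N(\phi)$, from $D_{\zeta_i'}E$, and from $\partial_\phi N(\phi)[D_{\zeta_i'}\phi]$, and then bound each piece via Propositions~\ref{solvability} and~\ref{propDerLinearOp} together with the quadratic estimate $\|N(\phi)\|_{**}\leq C\|\phi\|_*^2$. Your presentation differs only cosmetically in that you phrase the final step as inverting $I+T\circ\mathcal N'(\phi)$ by Neumann series, whereas the paper leaves it as the fixed-point relation $D_{\zeta_i'}\phi=\partial_{\zeta_i'}A+\partial_\phi A[D_{\zeta_i'}\phi]$ and reads off the bound directly; these are the same argument.
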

\begin{proof}
To prove  differentiability of the function $\phi(\zeta')$ we first
recall that $\phi$ is found solving the fixed point problem
\[
\phi = A (\phi; \mu',\zeta')
\]
where $A$ is given in \eqref{def-A} but now we emphasize the dependence on $\mu ',\zeta '$.
Formally, differentiating this equation with respect to $\zeta_i'$ we find
\begin{align}
\label{fixedDerPhi}
D_{\zeta_i'} \phi =
\partial_{\zeta_i'} A(\phi;\mu',\zeta') +
\partial_\phi A(\phi;\mu',\zeta')
[D_{\zeta_i'} \phi].
\end{align}
The notation we are using is $D_{\zeta_i'}$ for the total derivative of the corresponding function and $\partial_{\zeta_i'}$ for the partial derivative.
From this fixed point problem for $D_{\zeta_i'} \phi $ we shall derive an estimate for $\|D_{\zeta_i'} \phi\|_*$.

Since $A(\phi;\mu',\zeta')  = - T( N(\phi;\mu',\zeta') + E;\mu',\zeta')$ we get
\begin{align*}
\partial_{\zeta_i'}A(\phi;\mu',\zeta')
&=
-\partial_{\zeta_i'}
T( N(\phi;\mu',\zeta') + E;\mu',\zeta')
-
T( \partial_{\zeta_i'}N(\phi;\mu',\zeta') ;\mu',\zeta')
\\
& \quad
- T(   D_{\zeta_i'} E;\mu',\zeta') .
\end{align*}
From Proposition \ref{solvability} we see that
\[
\| T(   D_{\zeta_i'} E;\mu',\zeta') \|_*
\leq
C \| D_{\zeta_i'} E\|_{**} .
\]
Using Proposition~\ref{propDerLinearOp} and estimates \eqref{estPhiNonlinear} and \eqref{N}, we find that
\[
\| \partial_{\zeta_i'}
T\bigl( N(\phi;\mu',\zeta') + E;\mu',\zeta'\bigr) \|_*
\leq C \|  N(\phi;\mu',\zeta') + E \|_{**}
\leq
C \|E \|_{**} .
\]
Similarly,
\begin{align*}
\| T( \partial_{\zeta_i'}N(\phi;\mu',\zeta') ;\mu',\zeta') \|_*
& \leq
C
\|  \partial_{\zeta_i'}N(\phi;\mu',\zeta') \|_{**}
\leq
C
\|  \phi  \|_{*}^2
\leq C  \|E\|_{**}^2.
\end{align*}
Therefore,
\begin{align}
\label{estDerAZeta}
\| D_{\zeta_i'}A(\phi;\mu',\zeta')  \|_* \leq C \|E\|_{**}.
\end{align}
Next we estimate
\begin{align}
\nonumber
\|
\partial_\phi A(\phi;\mu',\zeta')
[D_{\zeta_i'} \phi] \|_*
&=
\|
T ( \partial_\phi N(\phi;\mu',\zeta')
[D_{\zeta_i'} \phi] ) \|_*
\\
\nonumber
& \leq
\| \partial_\phi N(\phi;\mu',\zeta')
[D_{\zeta_i'} \phi]  \|_{**}
\\
\nonumber
& \leq
C \, \|\phi\|_* \, \|D_{\zeta_i'} \phi\|_*
\\
\label{estDerAPhi}
& \leq
C \, \| E \|_* \, \|D_{\zeta_i'} \phi\|_* .
\end{align}
From \eqref{estDerAZeta}, \eqref{estDerAPhi} and the fixed point problem \eqref{fixedDerPhi} we deduce \eqref{derPhiZeta}.
The proof of \eqref{derPhiMu} is similar.
\end{proof}

As a corollary of the previous lemma and taking into account
\eqref{estDerEMu}, \eqref{estDerEZeta}, and \eqref{estE1} we get the following estimate
\begin{align}
\label{estDerPhi2}
\|D_{\zeta_i'} \phi \|_* + \|D_{\mu_i'} \phi \|_*  \leq C \varepsilon .
\end{align}

\section{The reduced energy}
\label{secReduction}

After Problem (\ref{linearizedproblemproj}) has been solved, we will find a solution to the original problem (\ref{maineqreesc}) if we manage to adjust the pair $(\zeta',\mu' )$ in such a way that $c_{i}(\zeta',\mu' )=0$, $i=1,2,3,4$. This is the {\em reduced problem} and it turns out to be variational, that is, its solutions are critical points of the reduced energy functional
\begin{align}
\label{defIlambda}
I_\lambda(\zeta',\mu' )
=
\bar J_\lambda(V+\phi)
\end{align}
where $\bar J_\lambda$ is the energy functional for the problem \eqref{maineqreesc}, that is,
\[
\bar J_\lambda(v)=\frac{1}{2}\int_{\Omega_\varepsilon}\vert\nabla v\vert^2 - \varepsilon^2\, \frac{ \lambda}{2}\int_{\Omega_\varepsilon}v^2-\frac{1}{6}\int_{\Omega_\varepsilon}v^6,
\]
the function $V$ is the ansatz given in \eqref{defV} and $\phi = \phi(\zeta',\mu' )$ is the solution of (\ref{linearizedproblemproj}) constructed in Lemma~\ref{lema2} for $\varepsilon \in (0,\varepsilon_1)$.

\begin{lemma}
\label{lemmaReduction1}
Assume that $\zeta_i'$, $\mu_i'$ satisfy \eqref{parametros} where $\delta>0$ is fixed small and $\varepsilon_1>0$ is small as in Lemma~\ref{lema2}. Then $I_\lambda$ is $C^1$ and
$V+\phi$ is a solution to \eqref{maineqreesc} if and only if
\begin{align}
\label{reducedSystem}
D_{\zeta'} I_\lambda(\zeta',\mu' )=0,
\quad
D_{\mu'} I_\lambda(\zeta',\mu' )=0 .
\end{align}
\end{lemma}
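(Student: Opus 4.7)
The plan is to carry out the final step of the Lyapunov--Schmidt reduction. First I would record that $I_\lambda$ is $C^1$: the map $(\zeta',\mu') \mapsto \phi$ has continuous partial derivatives with uniform $\|\cdot\|_*$-bounds by \eqref{derPhiZeta}--\eqref{derPhiMu} and \eqref{estDerPhi2}, the ansatz $V$ is smooth in the parameters, and $\bar J_\lambda$ is smooth on $H_0^1(\Omega_\varepsilon)$; the chain rule then yields $C^1$-regularity of $I_\lambda$. Next, I would rewrite the equation satisfied by $V+\phi$: combining the defining identities $\Delta V_i+\varepsilon^2 \lambda V_i = -w_{\mu_i',\zeta_i'}^5$ with \eqref{linearizedproblemproj2} and the definitions of $L$, $N$, and $E$, one finds
\[
\Delta(V+\phi)+\varepsilon^2 \lambda (V+\phi)+(V+\phi)^5 = \sum_{k,l} c_{kl}\, w_{\mu_k',\zeta_k'}^4 z_{kl} \quad \text{in }\Omega_\varepsilon.
\]
Testing against an arbitrary $\varphi \in H_0^1(\Omega_\varepsilon)$ gives the key identity
\[
\bar J_\lambda'(V+\phi)[\varphi] = -\sum_{k,l} c_{kl}\int_{\Omega_\varepsilon} w_{\mu_k',\zeta_k'}^4 z_{kl}\, \varphi.
\]

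Applying this with $\varphi = \partial_\Lambda (V+\phi)$ for each of the $4k$ parameters $\Lambda$ among the $(\zeta_i')_m$ and $\mu_i'$, and using the chain rule for $I_\lambda = \bar J_\lambda(V+\phi)$, I obtain
\[
\partial_\Lambda I_\lambda = -\sum_{k,l} c_{kl}\int_{\Omega_\varepsilon} w_{\mu_k',\zeta_k'}^4 z_{kl}\, \partial_\Lambda(V+\phi).
\]
The easy direction of the lemma then follows at once: if $V+\phi$ solves \eqref{maineqreesc}, every $c_{kl}$ vanishes, hence \eqref{reducedSystem} holds. For the converse, denoting the parameters by $\Lambda_{i,j}$ (with $j=1,2,3$ indexing $(\zeta_i')_j$ and $j=4$ indexing $\mu_i'$), the hypothesis \eqref{reducedSystem} turns into the linear system
\[
\sum_{k,l} A_{(i,j),(k,l)}\, c_{kl} = 0, \qquad A_{(i,j),(k,l)} := \int_{\Omega_\varepsilon} w_{\mu_k',\zeta_k'}^4 z_{kl}\, \partial_{\Lambda_{i,j}}(V+\phi),
\]
and the task reduces to showing that $A$ is invertible for $\varepsilon$ small.

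To that end I would expand $\partial_{\Lambda_{i,j}}(V+\phi)$. By the definition of $V$, $\partial_{\Lambda_{i,j}} V = z_{i,j} + \varepsilon^{1/2} \partial_{\Lambda_{i,j}}[\pi_i(\varepsilon\,\cdot)]$, and Lemma~\ref{lemma22} controls the correction as an $O(\varepsilon)$ term in the relevant pointwise sense. Combined with $\|\partial_{\Lambda_{i,j}}\phi\|_* = O(\varepsilon)$ from \eqref{estDerPhi2}, the leading contribution to $A_{(i,j),(k,l)}$ is $\int_{\Omega_\varepsilon} w_{\mu_k',\zeta_k'}^4 z_{kl}\, z_{i,j}$ up to an $O(\varepsilon)$ remainder. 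Since the profile $w_{\mu_k',\zeta_k'}^4 z_{kl}$ concentrates around $\zeta_k'$ and the bubbles are separated by at least $\delta/\varepsilon$, the off-diagonal blocks $k\neq i$ are negligibly small, while for $k=i$ the intra-bubble orthogonality $\int w_{\mu_i',\zeta_i'}^4 z_{i,j}\, z_{i,l} = 0$ for $j\neq l$ (recalled in Section~\ref{sectLinear}) reduces the block to a nonsingular diagonal matrix with strictly positive entries of order one. Hence $A$ is a small perturbation of an invertible block-diagonal matrix and is therefore invertible for $\varepsilon$ small, forcing all $c_{kl}=0$.

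The main obstacle is the bookkeeping of the error terms: one must verify that the $O(\varepsilon)$ difference $\partial_\Lambda V - z_{i,j}$ and the derivative $\partial_\Lambda \phi$ produce genuinely subleading contributions when tested against the rapidly concentrating kernel $w_{\mu_k',\zeta_k'}^4 z_{kl}$. The weighted norms $\|\cdot\|_*$ and $\|\cdot\|_{**}$ introduced earlier are tailored precisely so that \eqref{estDerPhi2} and Lemma~\ref{lemma22} deliver uniform-in-$\varepsilon$ estimates for these remainders; packaging them into the estimate on $A$ is the technical heart of the step, but it is standard once the weighted-norm framework is in place.
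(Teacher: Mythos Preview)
Your proposal is correct and follows essentially the same approach as the paper: both derive the identity $\partial_\Lambda I_\lambda = -\sum_{k,l} c_{kl}\int w_{\mu_k',\zeta_k'}^4 z_{kl}\,\partial_\Lambda(V+\phi)$ and then verify that the resulting $4k\times 4k$ coefficient matrix is, up to $o(1)$ corrections coming from \eqref{estDerPhi2} and Lemma~\ref{lemma22}, the block-diagonal matrix with entries $\delta_{ik}\delta_{jl}\int w_{\mu_i',\zeta_i'}^4 z_{ij}^2$, hence invertible for small $\varepsilon$. The only difference is cosmetic packaging---you name the matrix $A$ explicitly while the paper displays the individual limits---so there is nothing substantive to add.
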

\begin{proof}
Differentiating $I_\lambda$ with respect to $\mu_n'$
and using that $\phi$ solves \eqref{linearizedproblemproj}
we find
\begin{align*}
\partial_{\mu_n'}
I_\lambda(\zeta',\mu' )
&=
D \bar J_\lambda(V+\phi)[\partial_{\mu_n'} V + \partial_{\mu_n'} \phi]
\\
&= - \sum_{i,j} c_{ij} \int_{\Omega_\varepsilon}
w_{\mu_i^{\prime},\zeta_i^{\prime}}^4\,z_{ij}
\,( \partial_{\mu_n'} V + \partial_{\mu_n'} \phi ) .
\end{align*}
Similarly
\begin{align*}
D_{\zeta_{n}'}
I_\lambda(\zeta',\mu' )
&= -  \sum_{i,j} c_{ij} \int_{\Omega_\varepsilon}
w_{\mu_i^{\prime},\zeta_i^{\prime}}^4\,z_{ij} \,
( D_{\xi_{n}'} V + D_{\xi_{n}'} \phi ) .
\end{align*}
Since all terms in these expressions depends continuously on $\zeta',\mu'$ we deduce that $I_\lambda$ is $C^1$.

Clearly if $V+\phi$ is a solution to \eqref{maineqreesc} then all $c_{ij}=0$ and hence \eqref{reducedSystem} holds.
Reciprocally, if \eqref{reducedSystem} holds, then
\begin{align}
\label{systemC}
\left\{
\begin{aligned}
\sum_{i,j} c_{ij}
\int_{\Omega_\varepsilon}
w_{\mu_i^{\prime},\zeta_i^{\prime}}^4\,z_{ij} \,
( \partial_{\mu_n'} V + \partial_{\mu_n'} \phi )
&=0
\\
\sum_{i,j} c_{ij}
\int_{\Omega_\varepsilon}
w_{\mu_i^{\prime},\zeta_i^{\prime}}^4\,z_{ij} \,
( D_{\zeta_{n}'} V \cdot e_l + D_{\zeta_{n}'} \phi \cdot e_l)
&=0,
\end{aligned}
\right.
\end{align}
for all $n=1,\ldots,k$. Thanks to \eqref{estDerPhi2} we see that
\[
\int_{\Omega_\varepsilon}
w_{\mu_i^{\prime},\zeta_i^{\prime}}^4\,z_{ij}\,
\partial_{\mu_n'} \phi  \to0,
\quad
\int_{\Omega_\varepsilon}
w_{\mu_i^{\prime},\zeta_i^{\prime}}^4\,z_{ij} \,
D_{\zeta_{n}'}  \phi   \to0,
\]
as $\varepsilon\to 0$. Also, by \eqref{zij} and the expansion in Lemma~\ref{lemma22} we find that
\[
\int_{\Omega_\varepsilon}
w_{\mu_i^{\prime},\zeta_i^{\prime}}^4\,z_{ij}
\,\partial_{\mu_n'} V
=
\delta_{j4}\, \delta_{ik}
\int_{\R^3} w_{\mu',0}^4 (\partial_\mu w_{\mu',0})^2 + o(1)
\]
and
\[
\int_{\Omega_\varepsilon}
w_{\mu_i^{\prime},\zeta_i^{\prime}}^4\,z_{ij}
\,D_{\zeta_{n}'}  V \cdot e_l
=
\delta_{ik}\,
\delta_{jl}
\int_{\R^3}
w_{\mu',0}^4 (\nabla w_{\mu',0}\cdot e_1)^2
+o(1)
\]
as $\varepsilon\to0$, for some $\mu' \in (\delta,\frac{1}{\delta})$.

Therefore the system of equations \eqref{systemC} is invertible for the $c_{ij}$ when $\varepsilon>0$ is small, and hence $c_{ij}=0$ for all $i,j$.
\end{proof}

A nice feature of the system of equations \eqref{reducedSystem} is that it turns out to be equivalent to finding critical points of a functional of the pair $(\zeta',\mu')$ which is close, in appropriate sense, to the energy of $k$ bubbles $U_1 + \ldots +  U_k$.

\begin{lemma}
\label{lemmaApproxEnergy}
Assume the same conditions as in Lemma~\ref{lemmaReduction1}.
Then
\begin{align}
\label{expansion2}
I_\lambda(\zeta',\mu') = J_\lambda(\sum_{i=1}^k U_i) + \theta_\lambda^{(2)}(\zeta',\mu'),
\end{align}
where $\theta$ satisfies
\[
\theta_\lambda^{(2)}(\zeta',\mu')  =  - \int_0^1 s \left[\int_{\Omega_\varepsilon}\vert\nabla \phi\vert^2 - \varepsilon^2 \lambda \phi^2-5 (V + s\phi)^4 \phi^2\right]\,ds ,
\]
where $\phi = \phi(\zeta',\mu') $ is the solution of \eqref{linearizedproblemproj2} found in Lemma~\ref{lema2}.
\end{lemma}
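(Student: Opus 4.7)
The plan is to Taylor-expand $\bar J_\lambda$ along the segment $V+t\phi$, $t\in[0,1]$, and use the equation satisfied by $\phi$ to kill the first-order term. As a preliminary, the change of variable $x=\varepsilon y$ that was used to pass from $(\wp_\lambda)$ to \eqref{maineqreesc} shows by a direct computation that $\bar J_\lambda(v) = J_\lambda(u)$ whenever $v(y) = \varepsilon^{1/2} u(\varepsilon y)$. Since $V_i(y) = \varepsilon^{1/2} U_i(\varepsilon y)$ by construction, this gives
\[
\bar J_\lambda(V) = J_\lambda\Bigl(\sum_{i=1}^k U_i\Bigr),
\]
so it suffices to show that $\bar J_\lambda(V+\phi) - \bar J_\lambda(V)$ equals the integral on the right-hand side of the stated formula for $\theta_\lambda^{(2)}$.

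Next, set $f(t) := \bar J_\lambda(V+t\phi)$. Differentiating twice under the integral sign gives
\[
f'(t) = \int_{\Omega_\varepsilon}\bigl[\nabla(V+t\phi)\cdot\nabla\phi - \varepsilon^2\lambda(V+t\phi)\phi - (V+t\phi)^5\phi\bigr],
\]
\[
f''(t) = \int_{\Omega_\varepsilon}\bigl[|\nabla\phi|^2 - \varepsilon^2\lambda\phi^2 - 5(V+t\phi)^4\phi^2\bigr],
\]
and $f''$ is exactly the integrand appearing in $\theta_\lambda^{(2)}$. A single integration by parts on $[0,1]$ yields
\[
\int_0^1 s\,f''(s)\,ds = f'(1) - \bigl(f(1)-f(0)\bigr),
\]
so the identity \eqref{expansion2} reduces to the stationarity relation $f'(1) = D\bar J_\lambda(V+\phi)[\phi] = 0$.

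To verify this, I would use the equation \eqref{linearizedproblemproj2}. Combining $L(\phi) = \Delta\phi + \varepsilon^2\lambda\phi + 5V^4\phi$ with $\Delta V + \varepsilon^2\lambda V = -\sum_i w_{\mu_i',\zeta_i'}^5$ and the definitions of $N$ and $E$, one checks that $\phi$ satisfies
\[
\Delta(V+\phi) + \varepsilon^2\lambda(V+\phi) + (V+\phi)^5 = \sum_{i,j} c_{ij}\,w_{\mu_i',\zeta_i'}^4\,z_{ij}\quad\text{in } \Omega_\varepsilon.
\]
Testing against $\phi$ and integrating by parts (using $\phi = 0$ on $\partial\Omega_\varepsilon$) reproduces exactly $D\bar J_\lambda(V+\phi)[\phi]$ on the left; the right-hand side vanishes thanks to the orthogonality conditions $\int_{\Omega_\varepsilon} w_{\mu_i',\zeta_i'}^4\,z_{ij}\,\phi = 0$ imposed in the third line of \eqref{linearizedproblemproj2}.

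The $C^1$ regularity of $I_\lambda$ in $(\zeta',\mu')$ (and hence of $\theta_\lambda^{(2)}$) is part of Lemma~\ref{lemmaReduction1} and is supported by the derivative estimates \eqref{derPhiZeta}--\eqref{estDerPhi2}. I do not see a serious obstacle: the argument is second-order Taylor expansion with integral remainder plus one integration by parts, and the only point requiring attention is to apply the orthogonality conditions at the precise moment when they cancel the Lagrange-multiplier terms $c_{ij}\int w_{\mu_i',\zeta_i'}^4 z_{ij}\phi$.
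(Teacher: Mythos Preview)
Your proposal is correct and follows essentially the same approach as the paper: a second-order Taylor expansion of $\bar J_\lambda$ along the segment $V+t\phi$, written in the ``backward'' form $f(1)=f(0)+f'(1)-\int_0^1 s\,f''(s)\,ds$, together with the observation that $f'(1)=D\bar J_\lambda(V+\phi)[\phi]$ vanishes by the orthogonality conditions in \eqref{linearizedproblemproj2}. Your explicit mention of the scaling identity $\bar J_\lambda(V)=J_\lambda\bigl(\sum_i U_i\bigr)$ is a detail the paper leaves implicit, but otherwise the two arguments coincide.
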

\begin{proof}
From Taylor's formula
we find that
\begin{align*}
I_\lambda(\zeta',\mu' )
&=
\bar J_\lambda(V)
+ D \bar J_\lambda(V+\phi) [\phi]
+ \theta_\lambda^{(2)}(\zeta',\mu'),
\end{align*}
where
\begin{align}
\label{formulaR}
\theta_\lambda^{(2)}(\zeta',\mu')
&= -\int_0^1 s D^2 \bar J_\lambda(V + s\phi)[\phi^2] \,ds .
\end{align}
But since  $\phi$ satisfies \eqref{linearizedproblemproj}, we have that
\begin{align*}
D \bar J_\lambda(V+\phi) [\phi]
&=- \sum_{i,j} c_{ij} \int_{\Omega_\varepsilon} w_{\mu_i^{\prime},\zeta_i^{\prime}}^4\,z_{ij} \phi = 0 ,
\end{align*}
which implies \eqref{expansion2}.
\end{proof}

We remark that assuming  \eqref{parametros} we get
\[
\left|
\theta_\lambda^{(2)}(\zeta',\mu')  \right|\leq C \varepsilon^2 ,
\]
since \eqref{estPhi} holds.

\section{Critical multi-bubble}
\label{sectProof}
\noindent
Let $k\geq2$ be a given integer.
For $\delta>0$ fixed small we consider the sets
\begin{multline*}
\Omega_\delta^k:=\{
\zeta\equiv(\zeta_1,\ldots,\zeta_k)\in\Omega^k: \,\textrm{dist}(\zeta_i,\partial \Omega)>\delta, \vert \zeta_i-\zeta_j\vert>\delta,
 i=1,\ldots,k,\, j\neq i
\}
\end{multline*}
Recall that the main term in the expansion of $J_\lambda\Bigl(\sum_{i=1}^k U_i\Bigr)$ is the function
\begin{align*}
F_\lambda(\zeta,\mu)
&:=
\,k\,a_0
+a_1\sum_{i=1}^k\Bigl(\mu_i\,g_{\lambda}(\zeta_i)-\sum_{j\neq i}\mu_i^{1/2}\,\mu_j^{1/2}\,G_{\lambda}(\zeta_i,\zeta_j)\Bigr)+a_2\,\lambda\,\sum_{i=1}^k \mu_i^2
\\
&
-a_3\,\sum_{i=1}^k\Bigl(\mu_i\,g_{\lambda}(\zeta_i)-\sum_{j\neq i}\mu_i^{1/2}\mu_j^{1/2}\,G_{\lambda}(\zeta_i,\zeta_j)\Bigr)^2 ,
\end{align*}
where $\zeta\in\Omega_\delta^k$,
$\mu\equiv(\mu_1,\ldots,\mu_k)\in(\R^+)^k$
and the constants $a_i$ are given in \eqref{a0}--\eqref{a3}.

\noindent

\begin{proof}[Proof of Theorem~\ref{thm1}]

By Lemma~\ref{lemmaReduction1}, $v= V+\phi$ solves \eqref{maineqreesc} if the function $I_\lambda(\zeta',\mu')$ defined in \eqref{defIlambda} has a critical point.

In the sequel we will write also $I_\lambda(\zeta,\mu)$ for the same function but depending on $\zeta$, $\mu$, which we always assume satisfy the relation \eqref{muiPrime} with $\zeta'$, $\mu'$.

Using the  expansion of $J_\lambda\Bigl(\sum_{i=1}^kU_i\Bigr)$ given in Lemma~\ref{lemmaEnergyExpansion}, together with Lemma~\ref{lemmaApproxEnergy}, we see that
$I_\lambda(\zeta,\mu)$ has the form
\[
I_\lambda(\zeta,\mu) = F_\lambda(\zeta,\mu) + \theta_\lambda(\zeta,\mu)
\]
where $\theta_\lambda(\zeta,\mu)= \theta_\lambda^{(1)}(\zeta,\mu) + \theta_\lambda^{(2)}(\zeta,\mu) $,  $\theta_\lambda^{(1)}$ is the remainder that appears in Lemma~\ref{lemmaEnergyExpansion} and $\theta_\lambda^{(2)}$ the remainder in Lemma~\ref{lemmaApproxEnergy}.

It is convenient to perform the change of variables
\begin{align}
\label{LambdaMu}
\Lambda_i := \mu_i^{1/2} ,
\end{align}
where now $\Lambda\equiv(\Lambda_1,\ldots,\Lambda_k)\in \R^k$, and write, with some abuse of notation,
\begin{align*}
F_{\lambda}(\zeta,\Lambda)
:=&
\,k\,a_0
+a_1\sum_{i=1}^k\Bigl(\Lambda_i^2\,g_{\lambda}(\zeta_i)-\sum_{j\neq i}\Lambda_i\,\Lambda_j\,G_{\lambda}(\zeta_i,\zeta_j)\Bigr)+a_2\,\lambda\,\sum_{i=1}^k \Lambda_i^4\\
&-a_3\,\sum_{i=1}^k\Bigl(\Lambda_i^2\,g_{\lambda}(\zeta_i)-\sum_{j\neq i}\Lambda_i \Lambda_j\,G_{\lambda}(\zeta_i,\zeta_j)\Bigr)^2.
\end{align*}
Note that $\partial_{\mu'_i} I_\lambda(\mu',\zeta')=0$ is equivalent to  $\partial_{\mu_i} \tilde F_\lambda=0$, whenever $\Lambda_i \not=0$.

The function $F_\lambda$ can be expressed in terms of the matrix $M_\lambda$ as
\[F_{\lambda}(\zeta,\Lambda)
=
\,k\,a_0
+a_1\,
\Lambda^T M_\lambda(\zeta) \Lambda
+a_2\,\lambda\,\sum_{i=1}^k \Lambda_i^4-a_3\,\sum_{i=1}^k\Lambda_i^2\,( M_\lambda(\zeta) \Lambda )_i^2.
\]
In what follows we write $\sigma_1(\varepsilon,\zeta) $ for the smallest eigenvalue of $M_\lambda(\zeta)$ where $\lambda = \lambda_0 + \varepsilon$.
Using the Perron-Frobenius theorem or a direct argument as in \cite{bahri-li-rey} the eigenvalue $\sigma_1(\varepsilon,\zeta) $ is simple and has an eigenvector $v_1(\varepsilon,\zeta) $ with $|v_1(\varepsilon,\zeta) |=1$ and whose components are all positive.
By a standard application of the implicit function theorem, we have that $\sigma_1(\varepsilon,\zeta)$ and  $v_1(\varepsilon,\zeta)$ are smooth functions of $\varepsilon$ and $\zeta$ in a neighborhood of $(0,\zeta^0)$.

\noindent
We also have the following properties as a consequence of the hypothesis:
\[
D_\zeta \sigma_1(0,\zeta^0) = 0,\qquad
D^2_{\zeta\zeta} \sigma_1(0,\zeta^0)\, \text{is nonsingular},\qquad
\frac{\partial  \sigma_1 }{\partial \lambda} (0,\zeta^0)<0.
\]
These assertions can be proved by observing that
\[
\psi_{\lambda_0+\varepsilon}(\zeta)=\det M_{\lambda_0+\varepsilon}(\zeta) = \sigma_1(\varepsilon,\zeta) \sigma_*(\varepsilon,\zeta) ,
\]
where  $\sigma_*(\varepsilon,\zeta)$ is the product of the rest of the eigenvalues of $M_{\lambda_0+\varepsilon}(\zeta)$. Since $\sigma_1$ is a simple eigenvalue and $M_{\lambda_0}(\zeta^0)$ is positive semidefinite, we have $\sigma_*(0,\zeta^0)>0$ and this is still true for $\varepsilon,\zeta$ in a neighborhood of $(0,\zeta^0)$.
Then the properties stated above for $\sigma_1$ follow from  our assumptions on  $\psi_{\lambda_0+\varepsilon}(\zeta)$.

Since $\frac{\partial  \sigma_1 }{\partial \lambda} (0,\zeta^0)<0$,
we deduce that there are $\varepsilon_0>0$ and $c_0>0$ such that
 \begin{align}
\label{sigma-negative}
\sigma_1(\varepsilon,\zeta) <0 ,
\quad
\text{for } \varepsilon \in (0,\varepsilon_0),
\quad
\zeta \in B_{c_0 \sqrt{\varepsilon}}(\zeta^0).
\end{align}
Next we construct a $k\times k$ matrix  $P (\varepsilon,\zeta)$  for  $\varepsilon$ and $\zeta$ in a neighborhood of $(0,\zeta^0)$  with the following properties:
\begin{itemize}
\item[a)] the first column of $P$ is $v_1(\varepsilon,\zeta)$,
\item[b)] columns 2 to $k$ of $P$  are orthogonal to $v_1(\varepsilon,\zeta)$,
\item[c)] $P (\varepsilon,\zeta)$ is smooth  for  $\varepsilon$ and $\zeta$ in a neighborhood of $(0,\zeta^0)$,
\item[d)]  $P (0,\zeta^0)$ is such that $M_{\lambda_0}(\zeta^0)  = P (0,\zeta^0) D P (0,\zeta^0)^T$ with $D$ diagonal,
\item[e)] $P (0,\zeta^0)^T P (0,\zeta^0) = I$.
\end{itemize}
To achieve this we let $\bar v_1,\ldots,\bar v_k$ be an orthonormal basis of $\R^k$ of eigenvectors of $M_{\lambda_0}(\zeta^0)$ such that $\bar v_1 = v_1(0,\zeta^0)$.
We  let, for $\varepsilon>0$ and $\zeta$ close to $\zeta^0$,
\[
v_i(\varepsilon,\zeta) = \bar v_i - (\bar v_i\cdot v_1(\varepsilon,\zeta) ) v_1(\varepsilon,\zeta)
, \quad 2\leq i\leq k,
\]
and $P $ be the matrix whose columns are $v_1(\varepsilon,\zeta) ,\ldots, v_k(\varepsilon,\zeta) $.

We remark that although it would be more natural to consider a matrix $\tilde P(\varepsilon,\zeta)$, which diagonalizes $M_\lambda(\zeta)$, this matrix may not be differentiable with respect to $\varepsilon$ and $\zeta$. For this reason we choose to work with $P$ as defined before.

Let us perform the following change of variables
\begin{align}
\label{changeLambda}
\Lambda=|\sigma_1|^{1/2}P(\varepsilon,\zeta) \bar{\Lambda} .
\end{align}
Note that the quadratic form $\Lambda^T M_\lambda(\zeta) \Lambda$ can be written as
\begin{align*}
\Lambda^T M_\lambda(\zeta) \Lambda
= \sigma_1(\varepsilon,\zeta) |\sigma_1(\varepsilon,\zeta) | \bar\Lambda_1^2 + |\sigma_1(\varepsilon,\zeta)| (\bar\Lambda')^T  Q(\varepsilon,\zeta) \bar\Lambda',
\end{align*}
where
\[
\bar\Lambda' =
\left[
\begin{matrix}
\bar\Lambda_2 \\
\vdots\\
\bar\Lambda_k
\end{matrix}
\right]
,\quad
Q(\varepsilon,\zeta)=  P'(\varepsilon,\zeta)^T M_{\lambda_0+\varepsilon}(\zeta) P'(\varepsilon,\zeta)
\]
and $ P'(\varepsilon,\zeta) = [v_2,\ldots,v_k]$ is the matrix formed by the columns 2 to $k$ of $P(\varepsilon,\zeta)$.

Thus $I_\lambda(\zeta,\bar\Lambda) = F_\lambda(\zeta,\bar\Lambda) + \theta_\lambda(\zeta,\bar\Lambda)$ can be written as
\begin{align}
\nonumber
I_\lambda(\zeta,\bar{\Lambda}) & = ka_0
+ a_1 \Big[  -\sigma_1(\varepsilon,\zeta)^2 \bar\Lambda_1^2 + |\sigma_1(\varepsilon,\zeta)| (\bar\Lambda')^T  Q(\varepsilon,\zeta) \bar\Lambda' \Bigr]
\\
\label{formF1lambda}
& \quad
+\sigma_1(\varepsilon,\zeta)^2\,
\mathcal Poly_4(\varepsilon,\zeta,\bar \Lambda)
+ \theta_\lambda(\zeta,\bar \Lambda),
\end{align}
where
\begin{align*}
& \mathcal Poly_4(\varepsilon,\zeta,\bar \Lambda)
:=a_2\lambda\sum_{i=1}^k
\Bigl(\sum_{j=1}^k
P_{ij} (\varepsilon,\zeta) \bar{\Lambda}_j\Bigr)^4
\\
& \quad
-a_3\sum_{i=1}^k
\Big[
\Bigl(\sum_{j=1}^k
P_{ij}(\varepsilon,\zeta)\bar{\Lambda}_j\Bigr)^2
\Bigl(
\sigma_1 (\varepsilon,\zeta)v_{1,i}(\varepsilon,\zeta)  \bar\Lambda_1
+
\sum_{j=2}^k
\sum_{l=1}^k
(M_{\lambda_0+\varepsilon}(\zeta))_{il}\, P_{lj}(\varepsilon,\zeta) \,\bar{\Lambda}_j\Bigr)^2
\Big] ,
\end{align*}
and $\theta_\lambda(\zeta,\bar \Lambda)$ denotes the function $\theta_\lambda(\zeta,\mu) $ where we have used the transformations \eqref{LambdaMu} and \eqref{changeLambda}.

Note that $\mathcal Poly_4(\varepsilon,\zeta,\bar \Lambda) $ is a polynomial in the variables $\bar{\Lambda}_1,\ldots,\bar{\Lambda}_k$ of degree $4$ whose coefficients are functions of $\varepsilon$ and $\zeta$.

We need to solve the equations $D_{\zeta} I_{\lambda}=0$,
$\frac{\partial I_{\lambda}}{\partial\bar{\Lambda}_1}=0$, $\ldots,$
$\frac{\partial I_{\lambda}}{\partial\bar{\Lambda}_k}=0$.
Because of the the absolute value of $\sigma_1$ appearing in \eqref{formF1lambda} it is a bit more convenient to modify this function by defining
\begin{align*}
\bar F_{\lambda}(\zeta,\bar{\Lambda})
& = ka_0
- a_1  \sigma_1(\varepsilon,\zeta)^2 \bar\Lambda_1^2
- a_1 \sigma_1(\varepsilon,\zeta) (\bar\Lambda')^T  Q(\varepsilon,\zeta) \bar\Lambda'
\\
& \quad
+\sigma_1(\varepsilon,\zeta)^2\,  \mathcal Poly_4(\varepsilon,\zeta,\bar \Lambda)
+ \theta_\lambda(\zeta,\bar \Lambda) ,
\end{align*}
which coincides with $I_{\lambda}$ when $\sigma_1<0$.

%
%


Next we compute
\begin{align*}
D_{\zeta} \bar F_{\lambda}
& =
-2a_1\sigma_1\,(D_{\zeta}\sigma_1)\bar{\Lambda}_1^2
-a_1  (D_\zeta \sigma_1) (\bar\Lambda')^T   Q  \bar\Lambda'
- a_1 \sigma_1 (\bar\Lambda')^T  ( D_\zeta  Q) \bar\Lambda'
\\
& \quad
+2\,\sigma_1\,(D_{\zeta}\sigma_1) \, \mathcal Poly_4
+\sigma_1^2\, D_{\zeta} \mathcal Poly_4
+D_\zeta \theta_\lambda,
\\
\frac{\partial \bar F_\lambda}{\partial \bar\Lambda_1}
&=
- 2 a_1  \sigma_1^2 \bar\Lambda_1
+\sigma_1^2\,
\frac{\partial }{\partial \bar \Lambda_1}\mathcal Poly_4
+ \frac{\partial \theta_\lambda}{\partial \bar \Lambda_1}  ,
\\
\frac{\partial \bar F_\lambda}{\partial \bar\Lambda_l}
&=
- 2a_1 \sigma_1 \sum_{j=2}^k Q_{j-1,l-1} \bar\Lambda_j
+\sigma_1^2\,
\frac{\partial }{\partial \bar \Lambda_l} \mathcal Poly_4
+ \frac{\partial \theta_\lambda}{\partial \bar \Lambda_l}  ,
\end{align*}
with $ l=2,\ldots,k$.

Observe that, whenever $\sigma_1<0$, the equations
$D_{\zeta} \bar F_{\lambda}=0$,
$\frac{\partial \bar F_{\lambda}}{\partial\bar{\Lambda}_n}=0$, $n=1,\ldots,k$, are equivalent to
\begin{align}
\nonumber
0&=
-2a_1\bar{\Lambda}_1^2 (D_\zeta\sigma_1)
- \frac{a_1}{\sigma_1}  (D_\zeta \sigma_1 ) (\bar\Lambda')^T   Q  \bar\Lambda'
- a_1 (\bar\Lambda')^T  ( D_\zeta  Q ) \bar\Lambda'
\\
\label{eq1}
& \quad
+2 (D_{\zeta}\sigma_1) \, \mathcal Poly_4
+\sigma_1\, D_{\zeta}\mathcal Poly_4
+\frac{1}{\sigma_1}	 D_\zeta \theta_\lambda,
\\
\label{eq2}
0&=
- 2 a_1 \bar\Lambda_1
+\frac{\partial }{\partial \bar \Lambda_1}\mathcal Poly_4
+ \frac{1}{\sigma_1^2}\frac{\partial \theta_\lambda}{\partial \bar \Lambda_1} ,
\\
\label{eq3}
0&=- 2a_1  \sum_{j=2}^k Q_{j-1,l-1}  \bar\Lambda_j
+\sigma_1\,  \frac{\partial }{\partial \bar \Lambda_l}\mathcal Poly_4
+ \frac{1}{\sigma_1} \frac{\partial \theta_\lambda}{\partial \bar \Lambda_l}  ,
\end{align}
with $ l=2,\ldots,k$.
Note that we have normalized the equations (the first one was divided by $\sigma_1$, the second by $\sigma_1^2$ and the last ones by $\sigma_1$).

We claim that there exists $\varepsilon_0>0$ such that for each $\varepsilon\in (0,\varepsilon_0)$ the system \eqref{eq1}, \eqref{eq2}, \eqref{eq3}
has a solution $(\zeta(\varepsilon)$, $\bar\Lambda(\varepsilon))$ such that $\sigma_1(\varepsilon,\zeta(\varepsilon))<0$, thus yielding a critical point of $I_{\lambda_0+\varepsilon}$.

We will prove that \eqref{eq1}, \eqref{eq2}, \eqref{eq3} has a solution using degree theory in a ball  centered at a suitable point $(\bar\Lambda^0, \zeta^0)$ and with a conveniently small radius.

To find the center of this ball, let us consider a simplified version of equations \eqref{eq2}, \eqref{eq3}, by omitting the terms involving $\theta_\lambda$ and evaluating at $\varepsilon=0$, $\zeta=\zeta^0$.
Using that $Q(0,\zeta^0)$ is the diagonal matrix with entries $\sigma_2,\ldots,\sigma_k$,
where $0,\sigma_2,\ldots,\sigma_k$ are the eigenvalues of $M_{\lambda_0}(\zeta^0)$, we get
\begin{align}
\label{eq2a}
0&=
- 2 a_1 \bar\Lambda_1
+\frac{\partial }{\partial \bar \Lambda_1} \mathcal Poly_4(0,\zeta_0,\bar \Lambda) , \\
\label{eq3a}
0&=- 2a_1 \sigma_l \bar\Lambda_l
 ,\quad l=2,\ldots,k .
\end{align}
We note that there is a  solution of \eqref{eq2a}, \eqref{eq3a} which has the form
$\bar\Lambda^0= ( \bar\Lambda^0_1,\ldots,\bar\Lambda^0_k)$ with
\[
\bar\Lambda^0_l = 0 \qquad \text{for all }l =2,\ldots, k
\]
and
\begin{align}
\label{barLambda10}
\bar{\Lambda}^0_1:=\sqrt{\frac{a_1}{2 a_2 \lambda_0 \sum_{i=1}^k P_{i1}(0,\zeta^0)^4 }}  .
\end{align}
For later purposes it will be useful to know that the linearization of the functions on the right hand side of \eqref{eq2a} and \eqref{eq3a} around $\bar\Lambda^0$ define an invertible operator.
Since the right hand side of \eqref{eq3a} is a constant times the identity it is sufficient to study
the expression
$ - 2 a_1 \bar\Lambda_1 +\frac{\partial }{\partial \bar \Lambda_1}\mathcal Poly_4(0,\zeta_0,\bar \Lambda) $.
A straightforward computation yields
\begin{align}
\nonumber
\frac{\partial }{\partial\bar{\Lambda}_1}
\left[
 - 2 a_1 \bar\Lambda_1 + \frac{\partial }{\partial \bar \Lambda_1}\mathcal Poly_4 (0,\zeta_0,\bar \Lambda)
\right]
(\bar{\Lambda}^0)
&=-2a_1\,+
12a_2\lambda_0\Bigl(\sum_{i=1}^kP_{i1}(0,\zeta^0)^4\Bigr)(\bar{\Lambda}_1^0)^2
\\
\label{firstCoefficient}
& =4a_1,
\end{align}
which is nonzero.

We now introduce one more change of variables
\[
\widehat\Lambda_j = \bar\Lambda_j - \bar\Lambda_j^0, \quad 1\leq j\leq k .
\]
Define
\[
\Upsilon (\zeta,\widehat \Lambda) = A (\zeta,\widehat{\Lambda}) + \mathcal R  (\zeta,\widehat{\Lambda}),
\]
where
\[
A (\zeta,\widehat{\Lambda})  = (A_0 (\zeta,\widehat{\Lambda}) ,A_1 (\zeta,\widehat{\Lambda}) ,\ldots, A_k (\zeta,\widehat{\Lambda}) ))
\]
with
\begin{align*}
A_0 (\zeta,\widehat{\Lambda}) &=
- a_1 (\bar\Lambda_1^0)^2 D^2_{\zeta\zeta}\sigma_1(0,\zeta^0) (\zeta-\zeta^0),
\\
A_1 (\zeta,\widehat{\Lambda}) &= 4 a_1 \widehat\Lambda_1
+ \sum_{j=2}^k \frac{\partial^2 }{\partial \bar \Lambda_j \partial 	\bar \Lambda_1}
\mathcal Poly_4
(0,\zeta^0,\bar \Lambda^0)\widehat \Lambda_j
+D_\zeta \frac{\partial }{ \partial 	\bar \Lambda_1}
\mathcal Poly_4
(0,\zeta^0,\bar \Lambda^0)(\zeta-\zeta^0),
\\
A_l (\zeta,\widehat{\Lambda}) &= -2 a_1 \sigma_l \widehat \Lambda_l , \quad l=2,\ldots,k,
\end{align*}
and
\[
\mathcal{R} (\zeta,\widehat{\Lambda})  = (\mathcal{R}_0 (\zeta,\widehat{\Lambda}) ,\mathcal{R}_1 (\zeta,\widehat{\Lambda}) ,\ldots, \mathcal{R}_k (\zeta,\widehat{\Lambda}) ))
\]
with
\begin{align*}
\mathcal R_0 (\zeta,\widehat \Lambda)
&=
-a_1 (\bar \Lambda_1^0)^2 \bigl( D_\zeta\sigma_1(\varepsilon,\zeta) - D^2_{\zeta\zeta} \sigma_1(0,\zeta^0)(\zeta-\zeta^0) \bigr)
\\
& \quad
-2a_1 (2 \bar \Lambda_1^0  \widehat \Lambda_1 +  \widehat \Lambda_1^2) D_\zeta\sigma	_1(\varepsilon,\zeta)
\\
& \quad
-  \frac{a_1}{\sigma_1} D_\zeta \sigma_1 (\widehat \Lambda')^T   Q(\varepsilon,\zeta)  \widehat\Lambda'
- a_1 (\widehat\Lambda')^T  ( D_\zeta  Q(\varepsilon,\zeta) ) \widehat\Lambda'
\\
& \quad
+2 (D_{\zeta}\sigma_1) \, ( \mathcal Poly_4(\varepsilon,\zeta,\bar\Lambda^0 +  \widehat \Lambda)  -
\mathcal Poly_4(0,\zeta^0,\bar\Lambda^0 ) )
+\sigma_1\, D_{\zeta}\mathcal Poly_4(\varepsilon,\zeta,\bar\Lambda^0 +  \widehat \Lambda)
\\
&\quad
+\frac{1}{\sigma_1}	 D_\zeta \theta_\lambda(\zeta,\bar\Lambda^0 +  \widehat \Lambda),
\\
\mathcal R_1 (\zeta,\widehat \Lambda)
&=
\frac{\partial }{\partial \bar \Lambda_1}\mathcal Poly_4 (\varepsilon,\zeta,\bar\Lambda^0 +  \widehat \Lambda)
-
\frac{\partial }{\partial \bar \Lambda_1}\mathcal Poly_4 (0,\zeta^0,\bar\Lambda^0)
-   \sum_{j=1}^k \frac{\partial^2 }{\partial \bar \Lambda_j \partial 	\bar \Lambda_1}\mathcal Poly_4(0,\zeta^0,\bar \Lambda^0)\widehat \Lambda_j
\\
& \quad
-D_\zeta \frac{\partial }{ \partial 	\bar \Lambda_1}
\mathcal Poly_4
(0,\zeta^0,\bar \Lambda^0)(\zeta-\zeta^0)
+ \frac{1}{\sigma_1^2}\frac{\partial \theta_\lambda}{\partial \bar \Lambda_1} (\zeta,\bar\Lambda^0 +  \widehat \Lambda)  ,
\\
\mathcal R_l (\zeta,\widehat \Lambda)
&=- 2a_1  \sum_{j=2}^k ( Q_{j-1,l-1} (\varepsilon,\zeta) -\delta_{jl}) \widehat \Lambda_j
+\sigma_1(\varepsilon,\zeta)\,  \frac{\partial }{\partial \bar \Lambda_l} \mathcal Poly_4 (\varepsilon,\zeta,\Lambda^0 +  \widehat \Lambda)
\\
&\quad
+ \frac{1}{\sigma_1(\varepsilon,\zeta)} \frac{\partial \theta_\lambda}{\partial \bar \Lambda_l} (\zeta,\Lambda^0 +  \widehat \Lambda)  ,\quad l=2,\ldots k.
\end{align*}
Let us indicate the motivation for the definition of  $A_0$. In equation \eqref{eq1} we combine the terms $-2a_1\bar{\Lambda}_1^2 (D_\zeta\sigma_1) $ and $ 2 (D_{\zeta}\sigma_1) \, \mathcal Poly_4$ into the expression
\begin{align*}
&
-2a_1\bar{\Lambda}_1^2 (D_\zeta\sigma_1) + 2 (D_{\zeta}\sigma_1) \, \mathcal Poly_4
\\
&=
-2a_1 (\bar{\Lambda}_1^0)^2(D_\zeta\sigma_1)
-2a_1\bigl ( 2 \bar{\Lambda}_1^0 \widehat{\Lambda}_1 + \widehat{\Lambda}_1^2 \Bigr) (D_\zeta\sigma_1)
\\
& \quad
+ 2 (D_{\zeta}\sigma_1) \, \mathcal Poly_4(0,\zeta^0,\bar\Lambda_0)
+  2 (D_{\zeta}\sigma_1) \, ( \mathcal Poly_4 - \mathcal Poly_4(0,\zeta^0,\bar\Lambda_0) ) .
\end{align*}
In this expression we combine
\begin{align*}
-2a_1 (\bar{\Lambda}_1^0)^2(D_\zeta\sigma_1)
+ 2 (D_{\zeta}\sigma_1) \, \mathcal Poly_4(0,\zeta^0,\bar\Lambda_0)
&= 2  (D_{\zeta}\sigma_1)  \Bigl[
-a_1 (\bar{\Lambda}_1^0)^2 + \mathcal Poly_4(0,\zeta^0,\bar\Lambda_0) \Bigr].
\end{align*}
But  an explicit computation using \eqref{barLambda10} gives
\[
-a_1( \bar{\Lambda}_1^0)^2  + \mathcal Poly_4(0,\zeta^0,\bar\Lambda^0) =
-\frac{1}{2} a_1( \bar{\Lambda}_1^0)^2 .
\]
Then
\begin{align*}
&
-2a_1 (\bar{\Lambda}_1^0)^2(D_\zeta\sigma_1)
+ 2 (D_{\zeta}\sigma_1) \, \mathcal Poly_4(0,\zeta^0,\bar\Lambda_0)
\\
&= -a_1 (\bar \Lambda_1^0)^2 (D_\zeta\sigma_1)
\\
&=
-a_1 (\bar \Lambda_1^0)^2 D^2_{\zeta\zeta}\sigma_1(0,\zeta^0)(\zeta-\zeta^0)
-a_1 (\bar \Lambda_1^0)^2
\bigl(
(D_\zeta\sigma_1) - D^2_{\zeta\zeta}\sigma_1(0,\zeta^0)(\zeta-\zeta^0)
\bigr) .
\end{align*}
We define $A_0$ as  $-a_1 (\bar \Lambda_1^0)^2 D^2_{\zeta\zeta}\sigma_1(0,\zeta^0)(\zeta-\zeta^0)$ and we leave all the others terms in  $\mathcal R_0$.

Then the equations \eqref{eq1}, \eqref{eq2} and \eqref{eq3}  for the unknowns $\widehat\Lambda_j$, $1\leq j\leq k$ and $\zeta$ are equivalent to
\[
\Upsilon (\zeta,\widehat \Lambda)  = 0 .
\]
We are going to show that the this equation has a solution in the ball
\[
\mathcal B = \{ (\zeta,\widehat\Lambda) \in \R^{3k}\times \R^k : | (\zeta-\zeta^0,\widehat\Lambda) | < \varepsilon^{1-\sigma} \}
\]
with a fixed and small $\sigma>0$, using degree theory.



The linear operator $(\zeta,\widehat\Lambda) \mapsto A(\zeta-\zeta^0,\widehat{\Lambda})$ is invertible thanks to hypothesis (iii) in the statement of the theorem and \eqref{firstCoefficient}. Hence there is a constant $c>0$ such that
\[
|A(\zeta, \widehat\Lambda)|\geq c |( (\zeta-\zeta^0), \widehat\Lambda)|,
\]
for $(\zeta,\widehat{\Lambda} ) \in \partial \mathcal B$, if we take $\varepsilon>0$ sufficiently small.
To conclude that the equation $A(\zeta,\widehat{\Lambda}) + \mathcal R(\zeta,\widehat{\Lambda}) =0$ has a solution in $\mathcal B$, it suffices to verify that
\[
|\mathcal R(\zeta,\widehat{\Lambda})| \leq
o( \varepsilon^{1-\sigma})
\]
uniformly for  $( \zeta,\widehat{\Lambda} ) \in \bar{\mathcal B}$ as $\varepsilon\to 0$.

Before performing the computations we recall the assumptions we are imposing on $\mu$, $\zeta$.
From  \eqref{LambdaMu} and \eqref{changeLambda} we have
\[
\mu^{\frac{1}{2}} = |\sigma_1(\varepsilon,\zeta) |^{\frac{1}{2}} P(\varepsilon,\zeta) \bar{\Lambda}.
\]
Then for $( \zeta,\widehat{\Lambda} ) \in \bar{\mathcal B}$,
\begin{align}
\label{cotaZetaGorro}
|\zeta-\zeta^0|\leq  \varepsilon^{1-\sigma}
\end{align}
and
\begin{align}
\label{cotaLambdaGorro}
\bar\Lambda = \bar\Lambda^0 + \widehat{\Lambda},
\quad
| \widehat{\Lambda} | \leq \varepsilon^{1-\sigma}  .
\end{align}
Using Taylor's theorem we see that, for $|\zeta-\zeta^0|\leq \varepsilon^{1-\sigma}$,
\begin{align}
\label{cotasSigma}
-c_1\varepsilon\leq \sigma_1(\varepsilon,\zeta) \leq -c_2\varepsilon
\end{align}
with $c_1,c_2>0$, and in particular
\[
|\mu_i | \leq C \varepsilon ,\quad i=1,\ldots,k.
\]
Also for $|\zeta-\zeta^0|\leq \varepsilon^{1-\sigma}$,
\begin{align}
\label{cotaGradSigma1}
|D_\zeta  \sigma_1(\varepsilon,\zeta)| \leq C \varepsilon^{1-\sigma} .
\end{align}
We will also need  the following estimates: for   $( \zeta,\widehat{\Lambda} ) \in \bar{\mathcal B}$ we have
\begin{align}
\label{cota1}
|D_\zeta \theta_\lambda(\zeta, \bar\Lambda^0+\hat \Lambda)| & \leq C  \varepsilon^{3-\sigma},
\\
\label{cota2}
\Bigl|
\frac{\partial \theta_\lambda}{\partial \bar \Lambda_1} (\zeta,\bar\Lambda^0 +  \widehat \Lambda) \Bigr| & \leq C \varepsilon^{3-\sigma/2},
\\
\label{cota3}
\Bigl|
\frac{\partial \theta_\lambda}{\partial \bar \Lambda_l} (\zeta,\bar\Lambda^0 +  \widehat \Lambda) \Bigr| & \leq C \varepsilon^{3-\sigma} , \quad l=2,\ldots,k.
\end{align}
We will prove these estimates later on.

For   $( \zeta,\widehat{\Lambda} ) \in \bar{\mathcal B}$ let us estimate $\mathcal R_0 (\zeta,\widehat \Lambda) $.
We start with
\begin{align*}
& \big|
(\bar \Lambda_1^0)^2 \bigl( D_\zeta\sigma_1(\varepsilon,\zeta) - D^2_{\zeta\zeta} \sigma_1(0,\zeta^0)(\zeta-\zeta^0) \bigr)
\bigr|
\\
& \leq C |D_\zeta\sigma_1(\varepsilon,\zeta)-D_\zeta\sigma_1(0,\zeta)|
+C\bigl| D_\zeta\sigma_1(0,\zeta) - D_\zeta\sigma_1(0,\zeta^0) - D^2_{\zeta\zeta} \sigma_1(0,\zeta^0)(\zeta-\zeta^0) \bigr|
\\
& \leq C \varepsilon + C \varepsilon^{2-2\sigma} \leq C \varepsilon,
\end{align*}
since $|\zeta-\zeta^0|\leq \varepsilon^{1-\sigma}$.
Next,
\begin{align*}
\bigl|
(2 \bar \Lambda_1^0  \widehat \Lambda_1 +  \widehat \Lambda_1^2) D_\zeta\sigma	_1(\varepsilon,\zeta)
\bigr|
& \leq C\varepsilon^{2-2\sigma}
\end{align*}
because $|\widehat{\Lambda}_1|\leq \varepsilon^{1-\sigma}$ and $D_\zeta \sigma_1(0,\zeta^0)=0$.
To estimate $ \frac{D_\zeta \sigma_1}{\sigma_1}  (\widehat \Lambda')^T   Q(\varepsilon,\zeta)  \widehat\Lambda' $ we note that  \eqref{cotasSigma} together with \eqref{cotaGradSigma1} implies
\[
\Bigl|
\frac{D_\zeta \sigma_1(\varepsilon,\zeta)}{\sigma_1(\varepsilon,\zeta)}
\Bigr| \leq C \varepsilon^{-\sigma}
\]
and so
\begin{align*}
\Bigl|
a_1  \frac{D_\zeta \sigma_1}{\sigma_1}  (\widehat \Lambda')^T   Q(\varepsilon,\zeta)  \widehat\Lambda' \Bigr|
\leq C \varepsilon^{2-3\sigma}.
\end{align*}
Next, using \eqref{cota1} we estimate
\begin{align*}
\bigl|
a_1 (\widehat\Lambda')^T  ( D_\zeta  Q(\varepsilon,\zeta) ) \widehat\Lambda'
\bigr|
&\leq C \varepsilon^{2-2\sigma},
\\
\bigl|
2 (D_{\zeta}\sigma_1) \, ( \mathcal Poly_4(\varepsilon,\zeta,\bar\Lambda^0 +  \widehat \Lambda)  -
\mathcal Poly_4(0,\zeta^0,\bar\Lambda^0 ) )
\bigr|
&\leq C \varepsilon^{2-2\sigma} ,
\\
|\sigma_1(\varepsilon,\zeta)  D_\zeta \mathcal Poly_4( \varepsilon,\zeta,\bar\Lambda^0 + \widehat{\Lambda})|
& \leq C \varepsilon ,
\\
\left|\frac{1}{\sigma_1}	 D_\zeta \theta_\lambda(\zeta,\bar\Lambda^0 +  \widehat \Lambda)
\right|
&\leq C \varepsilon^{2-\sigma}
\end{align*}
This proves that
\begin{align}
\label{R0}
|\mathcal R_0 (\zeta,\widehat \Lambda) |\leq C \varepsilon
\end{align}
for   $( \zeta,\widehat{\Lambda} ) \in \bar{\mathcal B}$, if we have fixed $\sigma>0$ small.

Let us estimate $ |R_1 (\zeta,\widehat \Lambda) | $ for    $( \zeta,\widehat{\Lambda} ) \in \bar{\mathcal B}$.
By Taylor's theorem we have that
\begin{align*}
& \Bigl|\frac{\partial }{\partial \bar \Lambda_1}\mathcal Poly_4 (\varepsilon,\zeta,\bar\Lambda^0 +  \widehat \Lambda)
-
\frac{\partial }{\partial \bar \Lambda_1}\mathcal Poly_4 (0,\zeta^0,\bar\Lambda^0)
-   \sum_{j=1}^k \frac{\partial^2 }{\partial \bar \Lambda_j \partial 	\bar \Lambda_1}\mathcal Poly_4(0,\zeta^0,\bar \Lambda^0)\widehat \Lambda_j
\\
& \quad
-D_\zeta \frac{\partial }{ \partial 	\bar \Lambda_1}
\mathcal Poly_4
(0,\zeta^0,\bar \Lambda^0)(\zeta-\zeta^0)
\Bigr| \leq C \varepsilon + C |\zeta-\zeta^0|^2 + C |\widehat{\Lambda}|^2 \leq C \varepsilon.
\end{align*}
On the other hand by \eqref{cota2} we have
\begin{align*}
\left|
\frac{1}{\sigma_1^2}\frac{\partial \theta_\lambda}{\partial \bar \Lambda_1} (\zeta,\bar\Lambda^0 +  \widehat \Lambda)
\right| \leq C \varepsilon^{1-\sigma/2} .
\end{align*}
This shows that
\begin{align}
\label{R1}
|\mathcal R_1 (\zeta,\widehat \Lambda) |\leq C \varepsilon^{1-\sigma/2}
\end{align}
for   $( \zeta,\widehat{\Lambda} ) \in \bar{\mathcal B}$.

Finally, using \eqref{cota3}, we have that for    $( \zeta,\widehat{\Lambda} ) \in \bar{\mathcal B}$ and $l=2,\ldots,k$, the following holds
\begin{align*}
\biggl|
2a_1  \sum_{j=2}^k ( Q_{j-1,l-1} (\varepsilon,\zeta) -\delta_{jl}) \widehat \Lambda_j
\biggr|
\leq C \varepsilon^{2-2\sigma},
\end{align*}
\begin{align*}
\biggl| \sigma_1(\varepsilon,\zeta)\,  \frac{\partial }{\partial \bar \Lambda_l}  \mathcal Poly_4 (\varepsilon,\zeta,\Lambda^0 +  \widehat \Lambda)
\biggr|
\leq C \varepsilon^{2-2\sigma},
\end{align*}
and
\begin{align*}
\left|
\frac{1}{\sigma_1(\varepsilon,\zeta)} \frac{\partial \theta_\lambda}{\partial \bar \Lambda_l} (\zeta,\Lambda^0 +  \widehat \Lambda)
\right|
\leq C \varepsilon^{2-\sigma}.
\end{align*}
Therefore,
\begin{align}
\label{R2}
|\mathcal R_l (\zeta,\widehat \Lambda) |\leq C \varepsilon^{2-2\sigma}, \quad l=2,\ldots,k
\end{align}
for   $( \zeta,\widehat{\Lambda} ) \in \bar{\mathcal B}$.

Combining \eqref{R0}, \eqref{R1} and \eqref{R2} we obtain
\[
|\mathcal{R}(\zeta,\widehat{\Lambda})|\leq C \varepsilon^{1-\sigma/2},
\quad \forall ( \zeta,\widehat{\Lambda} ) \in \bar{\mathcal B}.
\]
A standard application of degree theory then yields a solution of $\Upsilon(\zeta,\widehat{\Lambda})=0$ in the ball $\mathcal{B}$.
Note that for $(\zeta,\widehat{\Lambda}) \in \mathcal B$ we are in the region where  \eqref{sigma-negative} holds, and hence $\sigma_1(\zeta , \bar\Lambda^0 + \widehat{\Lambda})<0$. Therefore we have found a critical point of $I_\lambda(\zeta,\mu)$, which was the desired conclusion.
\end{proof}

\begin{proof}[Proof of \eqref{cota1}, \eqref{cota2}, \eqref{cota3}]
By Lemma~\ref{lemmaEnergyExpansion} (using the satement with $\frac{\sigma}{2}$ instead of $\sigma$) we get directly the estimates
\begin{align}
\label{theta1a}
| D_\zeta  \theta_\lambda^{(1)} (\zeta,\mu)  | & \leq C |\mu|^{3-\sigma/2} \leq C \varepsilon^{3-\sigma/2} ,
\\
\label{theta1b}
\Bigl|
\frac{\partial \theta_\lambda^{(1)}}{\partial \bar\Lambda_i} (\zeta,\mu)  \Bigr|
&\leq C |\mu|^{3-\sigma/2} \leq C \varepsilon^{3-\sigma/2} .
\end{align}
To estimate $D_\zeta \theta_\lambda^{(2)}$, we recall formula \eqref{formulaR} which gives
\begin{align*}
\theta_\lambda^{(2)}(\zeta,\mu)
&= \int_0^1 s D^2 \bar J_\lambda(V + s\phi)[\phi^2] \,ds .
\\
&=
\int_0^1 s \left[\int_{\Omega_\varepsilon}\vert\nabla \phi\vert^2 - \varepsilon^2 \lambda \phi^2-5 (V + s\phi)^4 \phi^2\right]\,ds
\end{align*}
and therefore
\begin{align*}
|D_\zeta \theta_\lambda^{(2)}(\zeta,\mu) |
\leq C \|\phi\|_{*} \|D_\zeta\phi\|_* + \frac{C}{\varepsilon}\|\phi\|_*^2.
\end{align*}
We can compute
\begin{align*}
M_\lambda(\zeta) \mu^{\frac{1}{2}}
&=|\sigma_1|^{\frac{1}{2}} M_\lambda P \bar\Lambda
=|\sigma_1|^{\frac{1}{2}} \Bigl( \sigma_1 v_1 \bar\Lambda_1 + \sum_{l=2}^k \bar v_l \bar  \Lambda_l \Bigr) ,
\end{align*}
and thanks to \eqref{cotasSigma} we see that
\[
|M_\lambda(\zeta) \mu^{\frac{1}{2}}|\leq C \varepsilon^{\frac{3}{2}-\sigma} ,
\]
which in turn implies
\[
\| E \|_{**} \leq C \varepsilon^{2-\sigma}, \quad
\| \phi \|_{*} \leq C \varepsilon^{2-\sigma} .
\]
From this we deduce
\[
|  \theta_\lambda^{(2)}(\zeta,\mu)   |\leq C \varepsilon^{4-2\sigma}.
\]
We can write \eqref{expansionE2} in the form (near $\zeta_i'$)
\begin{align*}
E(y)
& = - 20\pi \alpha_3 \varepsilon^{\frac{1}{2}}
w_{\mu_i^{\prime},\zeta_i^{\prime}}(y)
M_\lambda \mu^{\frac{1}{2}} + O( w_{\mu_i^{\prime},\zeta_i^{\prime}}(y)  \varepsilon^2) + O(\varepsilon^5)
\\
&=
- 20\pi \alpha_3 \varepsilon^{\frac{1}{2}}
w_{\mu_i^{\prime},\zeta_i^{\prime}}(y)
|\sigma_1|^{\frac{1}{2}} \Bigl( \sigma_1 v_1 \bar\Lambda_1 + \sum_{l=2}^k \bar v_l \bar  \Lambda_l \Bigr)
+ O( w_{\mu_i^{\prime},\zeta_i^{\prime}}(y)^3  \varepsilon^2) + O(\varepsilon^5)  .
\end{align*}
The $O(\cdot )$ terms are bounded together with their derivatives with respect to $\zeta'$, $\mu'$.
Differentiating $E$ with respect to $\zeta'$ and $\bar\Lambda_l$, taking into account the last expression,  and thanks to
\eqref{cotaZetaGorro}, \eqref{cotaLambdaGorro}, \eqref{cotasSigma} and \eqref{cotaGradSigma1},
we find that  for $|y-\zeta_i|\leq \frac{\delta}{\varepsilon}$ the following hold
\begin{align*}
D_{\zeta'} E
&= O(\varepsilon^{2-\sigma} )w_{\mu_i^{\prime},\zeta_i^{\prime}}(y)^4
+O( w_{\mu_i^{\prime},\zeta_i^{\prime}}(y)^3  \varepsilon^2) + O(\varepsilon^5),
\end{align*}
\begin{align*}
D_{\bar\Lambda_1} E
&= O(\varepsilon^{ 2-\sigma } )w_{\mu_i^{\prime},\zeta_i^{\prime}}(y)^4
+O( w_{\mu_i^{\prime},\zeta_i^{\prime}}(y)^3  \varepsilon^2) + O(\varepsilon^5),
\end{align*}
and for $l=2,\dots,k$
\begin{align*}
D_{\bar\Lambda_l} E
&= O(\varepsilon )w_{\mu_i^{\prime},\zeta_i^{\prime}}(y)^4
+O( w_{\mu_i^{\prime},\zeta_i^{\prime}}(y)^3  \varepsilon^2) + O(\varepsilon^5)   .
\end{align*}
From this and analogous estimates outside of all the balls $B_{\delta/\varepsilon}(\zeta_i')$  it follows that
\begin{align*}
\|D_{\zeta'} \phi\|_* \leq \varepsilon^{ 2-\sigma } , \quad
\|D_{\bar\Lambda_1} \phi\|_* \leq \varepsilon^{ 2-\sigma}, \quad
\|D_{\bar\Lambda_l} \phi\|_* \leq \varepsilon, \quad l=2,\ldots,k.
\end{align*}
As a consequence,
\begin{align}
\label{theta2}
|D_\zeta\theta_\lambda^{(2)}(\zeta,\mu)   |\leq C \varepsilon^{3-\sigma},
\quad
|D_{\bar\Lambda_1} \theta_\lambda^{(2)}(\zeta,\mu) |\leq C \varepsilon^{4-2\sigma},
\quad
|D_{\bar\Lambda_l}\theta_\lambda^{(2)}(\zeta,\mu) |\leq C \varepsilon^{3-\sigma} ,
\end{align}
for $l=2,\ldots,k$.
(Here we are assuming $\sigma>0$ small so that $3-\sigma < 4 - 2 \sigma$).

Combining \eqref{theta1a}, \eqref{theta1b} and \eqref{theta2} we obtain the estimates
\eqref{cota1}, \eqref{cota2}, \eqref{cota3}.
\end{proof}

%
%
%

\section{The case of the annulus}
\label{exampleAnnulus}

Let $0<a<1$ and
\[
\Omega_a  = \{ x\in \R^3 \ : \  a < |x| < 1 \} .
\]
We want to show in this section that solutions with an arbitrary number of peaks exist for certain ranges of the parameter $a$.

\begin{prop}
\label{propA1}
Let $k\geq 2$ be fixed. Then there exists $a_k \in (0,1)$ such that for $a\in (a_k,1)$ there is $\lambda>0$ and a solution of $(\wp_\lambda)$ with $k$ concentration points.
\end{prop}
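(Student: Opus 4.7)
The plan is to apply Theorem~\ref{thm1} to $\Omega_a$ by exploiting the $O(3)$-invariance of the domain. I would place $\zeta^0 = (\zeta_1^0,\ldots,\zeta_k^0)$ at the vertices of a regular $k$-gon of radius $r \in (a,1)$ inscribed in the equatorial plane $\{z=0\} \subset \R^3$. The stabilizer of this configuration contains the dihedral group $D_k$ together with the reflection $z \mapsto -z$, and by uniqueness of the Green function, $g_\lambda$ and $G_\lambda$ inherit these symmetries. Consequently the matrix $M_\lambda(\zeta^0(r))$ is a real symmetric circulant matrix with constant diagonal entries $g^*(r,\lambda) := g_\lambda(\zeta_1^0)$ and off-diagonal entries $-G_\lambda(\zeta_i^0,\zeta_j^0)$ depending only on $|i-j| \bmod k$.

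Diagonalizing this circulant, its eigenvalues are
\[
\sigma_m(r,\lambda) = g^*(r,\lambda) - \sum_{l=1}^{k-1} G_\lambda(\zeta_1^0,\zeta_{1+l}^0)\cos(2\pi l m/k), \qquad m=0,\ldots,k-1 .
\]
Since the off-diagonal Green function values are positive, the smallest eigenvalue is $\sigma_0(r,\lambda) = g^*(r,\lambda) - \sum_{j\neq 1} G_\lambda(\zeta_1^0,\zeta_j^0)$, with Perron eigenvector $(1,\ldots,1)^T/\sqrt{k}$. Hence $M_\lambda(\zeta^0)$ is positive semidefinite with vanishing determinant if and only if $\sigma_0(r,\lambda)=0$ and $\sigma_m(r,\lambda) > 0$ for every $m\neq 0$. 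Moreover, equivariance forces $D_\zeta \psi_\lambda(\zeta^0)$ to vanish along all directions tangent to the group orbit, so condition (ii) of Theorem~\ref{thm1} reduces to the single scalar equation $\partial_r \sigma_0(r_0,\lambda_0)=0$ (using that at a simple root of $\sigma_0$, the derivative of $\det M_\lambda = \sigma_0\prod_{m\neq 0}\sigma_m$ is proportional to $\partial_r\sigma_0$).

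Next, I would analyse the thin annulus regime $a\uparrow 1$. Since $\lambda_1(\Omega_a) \sim \pi^2/(1-a)^2 \to \infty$, the admissible range of $\lambda$ becomes arbitrarily large. For a scaled radius $\rho:=(r-a)/(1-a)\in (0,1)$, both $g_\lambda$ and $G_\lambda$ between distinct polygon vertices admit asymptotic expansions read from the one-dimensional Dirichlet problem on $[a,1]$ coupled to the transverse sphere geometry, the Laplacian decoupling to leading order as a sum of a transverse 1D operator and a tangential operator on the sphere of radius $\simeq 1$. The reflection $r \mapsto a+1-r$ is a symmetry to leading order in $(1-a)$, so $r_0=(1+a)/2$ is a leading-order critical point of $\sigma_0$ in $r$, and a perturbation argument produces an actual critical radius $r_0$ close to $(1+a)/2$. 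The equation $\sigma_0(r_0,\lambda_0)=0$ is then solved by the intermediate value theorem in $\lambda$: $\sigma_0(r_0,\cdot)$ is positive for $\lambda$ close to $0$ (where $g_\lambda$ is close to the strictly positive harmonic Robin function while $G_\lambda$ is bounded) and tends to $-\infty$ as $\lambda \uparrow \lambda_1(\Omega_a)$ (where $g_\lambda \to -\infty$).

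Finally I would verify hypotheses (iii) and (iv). Using the identity $\partial_\lambda G_\lambda(x,y)=\int_\Omega G_\lambda(x,z)G_\lambda(y,z)\,dz > 0$, which yields $\partial_\lambda g_\lambda < 0$ and $\partial_\lambda G_\lambda(x,y) > 0$ for $x\neq y$, we get $\partial_\lambda \sigma_0 < 0$ at $(r_0,\lambda_0)$, and therefore
\[
\frac{\partial \psi_\lambda}{\partial \lambda}\bigg|_{\lambda_0}(\zeta^0) = \partial_\lambda \sigma_0 \cdot \prod_{m\neq 0}\sigma_m < 0 ,
\]
which is condition (iv). For condition (iii) I would decompose $T_{\zeta^0}\Omega^k \simeq (\R^3)^k$ under the symmetry group $D_k \times \langle z\mapsto -z\rangle$: Schur's lemma forces $D^2_{\zeta\zeta}\psi_{\lambda_0}(\zeta^0)$ to act as a scalar on each isotypic component, and non-singularity amounts to showing each such scalar is non-zero in the thin annulus limit. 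This last verification is the main obstacle, since it requires sharp asymptotic control over the second derivatives of $g_\lambda$ and $G_\lambda$ in every representation class simultaneously; the remaining steps (solvability of $\sigma_0=0$ and $\partial_r\sigma_0=0$, and the sign of $\partial_\lambda\sigma_0$) follow from symmetry and essentially one-dimensional arguments, but the full Hessian bound does not.
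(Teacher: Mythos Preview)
Your identification of the circulant structure of $M_\lambda(\zeta(r))$, the Perron eigenvalue $\sigma_0$, and the verification of hypotheses (i), (ii), (iv) of Theorem~\ref{thm1} are all in line with the paper. The genuine gap is hypothesis (iii): the Hessian $D^2_{\zeta\zeta}\psi_{\lambda_0}(\zeta^0)$ is \emph{necessarily singular}, not merely hard to estimate. The annulus is invariant under rotations about the $x_3$-axis (indeed under all of $SO(3)$), so $\psi_\lambda$ is constant along the orbit $\theta\mapsto R_\theta\cdot\zeta^0$, and the infinitesimal generator of this rotation is a nonzero null vector of the Hessian. In your Schur decomposition this shows up as an isotypic component on which the scalar is exactly zero, for every $a$. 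Hence Theorem~\ref{thm1} cannot be applied as stated, and no amount of thin-annulus asymptotics will rescue condition (iii).

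The paper circumvents this by dropping the nondegeneracy hypothesis and arguing directly on the reduced energy restricted to the symmetric class $\mu_1=\cdots=\mu_k=:\mu$, $\zeta=\zeta(r)$. On this two-parameter family the leading part of the energy is
\[
F_\lambda(\mu,r)=k a_0 + a_1 k\,\mu\,f_\lambda(r) + k a_2\lambda\,\mu^2 - a_3 k\,\mu^2 f_\lambda(r)^2,\qquad f_\lambda(r)=\sigma_0(r,\lambda).
\]
The choice of $(\lambda_0,r_0)$ is also different from yours: one defines $\lambda_0$ as the supremum of those $\lambda$ for which $\tilde M_\lambda(r)$ is positive definite for \emph{every} $r\in(a,1)$. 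Lemma~\ref{lemma81} (for $a$ close to $1$, $G_0^a$ between distinct polygon vertices tends to $0$ while $\min_{\Omega_a} g_0^a\to\infty$) ensures this set is nonempty, and one checks $\lambda_0<\lambda_1$. Then $f_{\lambda_0}\geq 0$ on $(a,1)$ with an interior zero $r_0$, so $r_0$ is automatically a global minimizer of $f_{\lambda_0}$; no perturbative argument around $(1+a)/2$ is required. For $\lambda=\lambda_0+\varepsilon$ one has $f_\lambda(r)<0$ near $r_0$, solves $\partial_\mu F_\lambda=0$ explicitly for $\mu=\mu(\lambda,r)>0$, substitutes back, and uses $f_\lambda(r)\to+\infty$ as $r\to a^+$ or $r\to 1^-$ to obtain a one-variable critical point whose min-type structure is stable under the $C^1$-small remainder $\theta_\lambda$. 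This robust variational structure replaces the nondegenerate Hessian and is what closes the argument.
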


Explicit values of $a_k$ seem difficult to get, but one can obtain estimates that show that for a low number of peaks the annulus does not need to be so thin. In particular for two bubbles we have the following estimate.

\begin{prop}
\label{propA2}
For $a\in (\frac{1}{49},1)$ there is $\lambda>0$ and a solution of $(\wp_\lambda)$ with $2$ concentration points.
\end{prop}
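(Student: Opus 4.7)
The plan is to apply Theorem~\ref{thm1} with $k = 2$ at a symmetric configuration $\zeta^0 = (r_0 e_1, -r_0 e_1)$, for suitable $r_0 \in (a,1)$ and $\lambda_0 \in (0, \lambda_1)$, using the full $O(3)$-symmetry of $\Omega_a$ to reduce the six-dimensional problem for $\zeta \in \Omega_2^*$ to a one-dimensional tangency condition in $(r, \lambda)$. Writing $g(r,\lambda) := g_\lambda(r e_1)$ and $G(r,\lambda) := G_\lambda(r e_1, -r e_1)$, the restriction of $\psi_\lambda$ to this symmetric family equals $(g - G)(g + G)$, so its vanishing together with the vanishing of its $r$-derivative at a point where $g = G > 0$ will provide conditions (i)--(ii) of Theorem~\ref{thm1}.

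First I set
\[
h_\lambda(r) := g(r,\lambda) - G(r,\lambda), \quad r \in (a,1),\ \lambda \in (0,\lambda_1),
\]
and record the monotonicities $\partial_\lambda G_\lambda > 0$ and $\partial_\lambda g_\lambda < 0$, which follow from the resolvent identity $G_\lambda - G_0 = \lambda \int_{\Omega_a} G_0(\cdot, z) G_\lambda(z, \cdot)\, dz > 0$. Hence $h_\lambda(r)$ is strictly decreasing in $\lambda$ for each fixed $r$. Since $g_\lambda(r e_1) \to +\infty$ as $r \to a^+$ or $r \to 1^-$ while $G_\lambda(r e_1,-r e_1)$ stays bounded, $h_\lambda$ is coercive on $(a,1)$ and $\min_{r} h_\lambda$ is attained in the interior and is a continuous, strictly decreasing function of $\lambda$. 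As $\lambda \uparrow \lambda_1$, $\min_r g_\lambda \to -\infty$ while $G_\lambda > 0$, so $\min_r h_\lambda$ eventually becomes negative. Provided one can show $h_0 > 0$ on $(a,1)$, the intermediate value theorem produces $\lambda_0 \in (0, \lambda_1)$ and $r_0 \in (a,1)$ with $h_{\lambda_0}(r_0) = 0$ and $h_{\lambda_0}'(r_0) = 0$, and with $g_{\lambda_0}(r_0 e_1) = G_{\lambda_0}(r_0 e_1, -r_0 e_1) > 0$.

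Next I verify the hypotheses of Theorem~\ref{thm1} at $(\zeta^0, \lambda_0)$. Part~(i) is immediate: $\psi_{\lambda_0}(\zeta^0) = 0$, and positive semidefiniteness of $M_{\lambda_0}(\zeta^0)$ follows from $g_{\lambda_0}(r_0 e_1) > 0$ by the remark following the theorem. For~(ii), the exchange $\zeta_1 \leftrightarrow \zeta_2$, reflections across planes through the $e_1$-axis, and rotations about $e_1$ force the only possibly nonvanishing component of $D_\zeta \psi_{\lambda_0}(\zeta^0)$ to lie along the ``symmetric radial'' direction $(e_1, -e_1)$; this component equals $(g+G)\, h_{\lambda_0}'(r_0) = 0$. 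For~(iii), the same symmetries block-diagonalize $D^2_{\zeta\zeta} \psi_{\lambda_0}(\zeta^0)$ into a symmetric-radial block proportional to $g_{\lambda_0}(r_0 e_1)\, h_{\lambda_0}''(r_0)$ (positive at a strict interior minimum of $h_{\lambda_0}$), an antisymmetric-radial block, and two transverse $2 \times 2$ scalar blocks, whose non-vanishing is checked by direct second-derivative expansions of $g_\lambda$ and $G_\lambda$ at $\zeta^0$. Part~(iv) is automatic:
\[
\partial_\lambda \psi_\lambda\big|_{\lambda = \lambda_0}(\zeta^0) = 2 g_{\lambda_0}(r_0 e_1) \bigl(\partial_\lambda g_{\lambda_0}(r_0 e_1) - \partial_\lambda G_{\lambda_0}(r_0 e_1, -r_0 e_1)\bigr) < 0,
\]
by the monotonicities above. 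Theorem~\ref{thm1} then yields the claimed two-bubble solution.

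The main obstacle is the quantitative estimate $h_0 > 0$ on $(a,1)$ for $a > 1/49$. This I would obtain by the method of images: Kelvin inversions in the two bounding spheres produce explicit series representations of $g_0(r e_1)$ and $G_0(r e_1, -r e_1)$ as sums over iterated image points on the $e_1$-axis, with explicit image charges that can be enumerated along a binary tree of reflections. Truncating the series and tracking signs carefully, the inequality $h_0(r) > 0$ reduces to an elementary one in $r$ and $a$; testing at a convenient $r$ (for instance the Kelvin-invariant radius $r = \sqrt{a}$) and optimizing gives the threshold $a = 1/49$. The careful bookkeeping of this image expansion to extract the precise constant $1/49$, together with the verification of non-degeneracy of the transverse Hessian blocks in step~(iii), will be the most delicate parts of the argument; everything else is driven by symmetry and the monotonicity in $\lambda$.
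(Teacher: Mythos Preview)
Your overall strategy, reducing to the one-variable function $h_\lambda(r)=g_\lambda(re_1)-G_\lambda(re_1,-re_1)$ and using its monotonicity in $\lambda$ together with its coercivity in $r$, matches the paper exactly, as does the identification of the key quantitative input $h_0>0$ for $a>1/49$. For that input the paper does not build image expansions from scratch: it simply quotes the explicit series for the Green function on the annulus from Grossi--Vujadinovi\'c, writes $g_0(x)-G_0(x,-x)$ as a series of nonnegative terms minus $\tfrac{1}{2|x|}$, keeps only the $m=0$ term, and reads off the threshold $1/49$ from the resulting elementary inequality. Your method-of-images route should give the same series, so this is only a difference of presentation.

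The genuine gap is in your verification of hypothesis~(iii) of Theorem~\ref{thm1}. You use the full $O(3)$-symmetry of $\Omega_a$ to kill components of the gradient, but that same symmetry forces $D^2_{\zeta\zeta}\psi_{\lambda_0}(\zeta^0)$ to be \emph{singular}: rotations about any axis orthogonal to $e_1$ move the pair $(r_0 e_1,-r_0 e_1)$ inside a two-dimensional orbit of critical configurations, so the Hessian has at least a two-dimensional kernel along those transverse directions. Your claim that the ``transverse $2\times2$ blocks'' are non-vanishing is therefore false, and Theorem~\ref{thm1} cannot be applied as stated. The paper confronts this head-on in the proof of Proposition~\ref{propA1}: it says explicitly that the nondegeneracy hypothesis fails because of rotational invariance, and it does \emph{not} invoke Theorem~\ref{thm1}. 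Instead it goes back to the reduced energy $I_\lambda$, restricts to the symmetric one-parameter family $\zeta_j(r)$ with equal $\mu_j$, and shows by an elementary one-variable argument that the leading part $F_\lambda(\mu,r)$ has a critical point that is stable under $C^1$ perturbations of size $o(\varepsilon^2)$; this is what survives the symmetry. A secondary issue is that you assume $h_{\lambda_0}''(r_0)>0$, i.e.\ that the radial minimum is nondegenerate, which is not guaranteed either; the paper's $C^1$-stability argument sidesteps this as well.
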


Let us give first a lemma about the behavior of the Green function for a thin domain.
For this we write now  $G_0^a(x,y)$, $H_0^a(x,y)$, $g_0^a(x) = H_0^a(x,x)$ for the Green function, its regular part and the Robin function  respectively for $\lambda=0$ in the domain $\Omega_a$.
\begin{lemma}
\label{lemma81}
Let $x_0, y_0$ be fixed  so that $|x_0|=|y_0|=1$ and $y_0 \not= x_0$. Then
\begin{align}
\label{convergenceGreen}
G_0^a(y,x) \to 0
\end{align}
as $a \to 1$ uniformly for $y = r y_0$ with $r\in (a,1)$ and $x = r' x_0$ with $r'\in (a,1)$.
Moreover,
\begin{align}
\label{convergenceRobin}
\min_{\Omega_a} g_0^a \to \infty
\end{align}
as $a \to 0$.
\end{lemma}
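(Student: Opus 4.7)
The plan is to exploit \emph{domain monotonicity} of the Green function, namely that $\Omega\subset\widetilde\Omega$ implies $G_\Omega\le G_{\widetilde\Omega}$ (and hence $H_\Omega\ge H_{\widetilde\Omega}$) on $\Omega$, by comparing $G_0^a$ and $H_0^a$ with those of two explicit domains that contain $\Omega_a$: the unit ball $B_1=\{|z|<1\}$ and the exterior $B_a^c=\R^3\setminus\overline{B_a}$. Both admit closed-form Green functions via Kelvin inversion.

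For \eqref{convergenceGreen} I would start from the explicit formula
\[
G_{B_1}(y,x)=\frac{1}{4\pi|y-x|}-\frac{1}{4\pi|x|\,|y-x^*|},\qquad x^*=\frac{x}{|x|^2},
\]
and use the identity
\[
|x|^2|y-x^*|^2-|y-x|^2=(1-|x|^2)(1-|y|^2),
\]
which follows at once by expanding $|x|^2|y-x^*|^2=|x|^2|y|^2-2\,y\cdot x+1$ and comparing with $|y-x|^2$. Rationalizing the difference of fractions yields
\[
G_{B_1}(y,x)=\frac{(1-|x|^2)(1-|y|^2)}{4\pi\,|y-x|\,|x|\,|y-x^*|\,\bigl(|y-x|+|x|\,|y-x^*|\bigr)}.
\]
For $y=r y_0$ and $x=r'x_0$ with $r,r'\in(a,1)$ and $x_0\neq y_0$ fixed on the unit sphere, the positive angular separation keeps $|y-x|$ and $|y-x^*|$ bounded below uniformly as $a\to1$, whereas the numerator is bounded by $4(1-a)^2$. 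Domain monotonicity $G_0^a\le G_{B_1}$ therefore gives $G_0^a(y,x)=O((1-a)^2)$, which is \eqref{convergenceGreen}.

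For \eqref{convergenceRobin}, read as a statement as $a\to 1$ (consistent with the thin-shell context of Propositions \ref{propA1}--\ref{propA2}), I would combine two-sided monotonicity. From $\Omega_a\subset B_1$ and the well-known expression $g_{B_1}(x)=\frac{1}{4\pi(1-|x|^2)}$ one obtains
\[
g_0^a(x)\ \ge\ \frac{1}{4\pi(1-|x|^2)};
\]
and from $\Omega_a\subset B_a^c$ together with the Kelvin-inverted ball formula for the exterior, $g_{B_a^c}(x)=\frac{a}{4\pi(|x|^2-a^2)}$, one has
\[
g_0^a(x)\ \ge\ \frac{a}{4\pi(|x|^2-a^2)}.
\]
The pointwise maximum of these two lower bounds, a function of $|x|^2\in(a^2,1)$, is increasing then decreasing after equality, and is therefore minimized at $|x|^2=a$, where both values equal $\frac{1}{4\pi(1-a)}$. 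Hence $\min_{\Omega_a}g_0^a\ge \frac{1}{4\pi(1-a)}\to\infty$ as $a\to1$.

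The argument has no substantial obstacle. The only point requiring thought is the cancellation packaged in the identity $|x|^2|y-x^*|^2-|y-x|^2=(1-|x|^2)(1-|y|^2)$: it exhibits that $G_{B_1}$ vanishes quadratically in the distances of the two arguments from the unit sphere when they remain angularly separated, and this quadratic smallness (together with the matching blow-up of the regular part near each boundary component) is what, via domain monotonicity, forces both assertions of the lemma.
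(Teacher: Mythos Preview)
Your proof is correct; you also correctly read the limit in \eqref{convergenceRobin} as $a\to 1$, which is what the paper actually proves (the ``$a\to 0$'' in the statement is a typo).

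Your approach to both parts rests on domain monotonicity and explicit Green functions, whereas the paper uses different tools for \eqref{convergenceGreen}. There the paper argues by a blow-up: after rescaling by $\varepsilon=1-a$, the annulus flattens to the slab $\{-1<x_1<0\}$, and the rescaled Green functions, being uniformly bounded by $\Gamma$, converge to a bounded harmonic function on the slab vanishing on its boundary; a reflection plus Liouville then forces the limit to be zero. This is qualitative but robust, requiring no formula. Your route is more elementary and sharper: the identity $|x|^2|y-x^*|^2-|y-x|^2=(1-|x|^2)(1-|y|^2)$ together with $G_0^a\le G_{B_1}$ delivers the quantitative rate $G_0^a(y,x)=O((1-a)^2)$. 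For \eqref{convergenceRobin} the paper uses exactly your first inequality, $g_0^a\ge g_{B_1}$, which already suffices since $\min_{\Omega_a} g_{B_1}=\frac{1}{4\pi(1-a^2)}\to\infty$; your additional comparison with the exterior ball $B_a^c$ is correct but not needed.
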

\begin{proof}
To prove \eqref{convergenceGreen} let us write $\varepsilon= 1 -a>0$ and let $\varepsilon\to 0$. We also change the notation $G_0^a$ to $G_0^\varepsilon$, $\Omega_a$ to $\Omega_\varepsilon$ and shift coordinates so that the annulus is centered at $-e_1$:
\[
\Omega_\varepsilon = \{ z\in \R^3 : 1-\varepsilon<|z + e_1|<1\},
\]
where $e_1=(1,0,0)$.
Without loss of generality we can assume that $y_0 = 0$.
Our assumption now is that $|x_0+e_1|=1$ and $x_0\not=0$.

By the maximum principle
\[
0\leq G_0^\varepsilon(y,x) \leq \Gamma(y-x) , \quad \forall y\in \Omega_\varepsilon\setminus\{x\},
\]
for any $x\in \Omega_\varepsilon$.
Let  $\rho  =\frac{|x_0-y_0|}{4} >0$. Then there is $C$ such that
\[
0\leq G_0^\varepsilon(y,x) \leq C , \quad \forall y\in \Omega_\varepsilon \cap B_\rho(0),
\]
for any $x$ in the segment $\{ t x_0 + (1-t)(-e_1) : t\in (a,1)\}$.

Let
\[
\tilde G^\varepsilon(y') =  G_0^\varepsilon(\varepsilon y',x)
\quad  y' \in \frac{1}{\varepsilon}( \Omega_\varepsilon \cap B_\rho(0)) .
\]
Then $\tilde G^\varepsilon$ is harmonic and bounded in $\frac{1}{\varepsilon}( \Omega_\varepsilon \cap B_\rho(0)) $. By standard elliptic estimates, up to a subsequence,  $\tilde G^\varepsilon \to \tilde G$ which is harmonic and bounded on the slab $S = \{ (x_1,x_2,x_3) : -1<x_1<0 \}$, and vanishes on the boundary of this slab. We can then extend $\tilde G$ by reflections to a bounded harmonic function in $\R^3$. By the Liouville theorem $\tilde G$ is constant but then $\tilde G\equiv 0$.  Because the limit is unique we have the convergence for all $\varepsilon\to 0$, that is,
$\tilde G^\varepsilon (y')\to 0$ uniformly on compact subsets of $\frac{1}{\varepsilon}(\Omega_\varepsilon\cap B_\rho(0))$.
Therefore  $\tilde G^\varepsilon (y') \to  0$ uniformly
for $y' \in \{ (y_1',0,0) : y_1' \in [-1,0] \}$.
Changing variables back we obtain  \eqref{convergenceGreen}.

We now prove \eqref{convergenceRobin}.
We will use the maximum principle to compare the Green function of $\Omega_a$ with the Green function of suitable domains. First, Let $G_{B_1}$ denote the Green function of the unit ball $B_1 = B_1(0)$:
\[
-\Delta_y G_{B_1}(y,x) = \delta_x \quad \text{in }B_1, \quad G_{B_1}(y,x) = 0 \quad y \in \partial B_1.
\]
If $x\in \Omega_a$, the maximum principle guarantees that $G_0^a(y,x) \leq G_{B_1}$ in $\Omega_a$.
This implies that $g_0^a(x) \geq g_{B_1}(x)$ where  $g_{B_1}(x)$ denotes the Robin function in $B_1$. It is well known that $ g_{B_1}(x) \geq c \,\textrm{dist}(x,\partial B_1)^{-1}$ for some $c>0$. This implies that $\min_{\Omega_a} g_0^a \geq \frac{c}{1-a}\to \infty$ as $a\to 1$.
\end{proof}

To prove Propositions~\ref{propA1} and \ref{propA2} we consider a configuration of points in the $xy$ plane at equal distance from the origin and spaced at uniform angles, that is,
\[
\zeta_j(r) = ( r e^{2 \pi i \frac{j-1}{k}} , 0 ) \in \R^3  , \quad j=1,\ldots, k,
\]
where the notation we are using for $z\in \C$ and $t\in \R$, is $(z,t) = (Re(z),Im(z),t)$.

Define then the matrix $M_\lambda$ restricted to this configuration as
\[
\tilde M_\lambda(r) = M_\lambda(\zeta(r) ),
\]
where $\zeta(r) = (\zeta_1(r) , \ldots, \zeta_k(r) )$.
Similarly we define
\[
\tilde \psi_\lambda(r) = \tilde \psi_\lambda( \zeta(r))  ,
\]
and denote by $\tilde \sigma_j(\lambda,r)$  the eigenvalues of $\tilde M_\lambda(r)$ with $\tilde \sigma_1$ the smallest one.


\begin{proof}[Proof of Proposition~\ref{propA1}]
Let $k\geq 2$ be given.
By Lemma~\ref{lemma81}, if $a>0$ is small,  we have
\begin{align}
\label{positiveSigma1Lambda0}
& \tilde \sigma_j(0,r)  >0 , \quad \forall r\in (a,1), \ j=1,\ldots, k,
\\
\label{positivePairLambda0}
& g_0(\zeta_1(r))^2 - G_0(\zeta_1(r), \zeta_j(r))^2>0\quad \forall r\in (a,1), \ j=2,\ldots, k.
\end{align}
Now, we
define
\begin{align}
\label{lambda0}
\lambda_0 = \sup\, \{\lambda \in (0,\lambda_1): \sigma_j(\lambda',r) >0  \quad \forall r\in (a,1), \ j=1,\ldots, k , \ \lambda'\in (0,\lambda)	 \}.
\end{align}
Then $\lambda_0$ is well defined by continuity and \eqref{positiveSigma1Lambda0}. We will need the following properties:
\begin{align}
\label{positivePair}
& g_\lambda(\zeta_1(r))^2 - G_\lambda(\zeta_1(r), \zeta_j(r))^2>0\quad \forall
\lambda \in [0,\lambda_0),
r\in (a,1), \ j=2,\ldots, k,
\\
\label{lambdaLess}
& \lambda_0 <\lambda_1,
\\
\label{sigma1}
& \tilde \sigma_1(\lambda_0,r) \geq 0 \quad \text{and there exists $r_0 \in (a,1)$ such that $\sigma_1(\lambda_0,r_0)=0$},
\\
\label{simpleEigenvalue}
& \tilde \sigma_j(\lambda_0,r) >0 \quad \text{for all $r\in (a,1)$ and $ j=2,\ldots, k$},
\\
\label{sigma1Decreasing}
& \frac{ \partial \tilde \sigma_1 }{\partial \lambda}(\lambda_0,r)<0, \quad \forall r\in (a,1).
\end{align}
Let us prove  \eqref{positivePair}. If this fails, then for some $\lambda \in [0,\lambda_0)$, some $r_0 \in (a,1)$, and some  $ j=2,\ldots, k  $, we have $g_\lambda(\zeta_1(r))^2 - G_\lambda(\zeta_1(r), \zeta_j(r))^2 \leq 0$.
This condition implies that the matrix $\tilde M_\lambda(r)$ has a nonpositive eigenvalue. This follows from the criterion that asserts that a symmetric matrix $A=(a_{i,j})_{1\leq i,j\leq k}$ is positive definite if and only if all submatrices  $(a_{i,j})_{1\leq i,j\leq m}$ are positive definite for $ m=1,\ldots, k$ (we apply this to $\tilde M_\lambda(r)$ after the permutation of the rows 2 and $j$, and the columns 2 and $j$).
But this contradicts the definition of $\lambda_0$ \eqref{lambda0}.

Let us prove \eqref{lambdaLess}.  For this we recall that $\min_{\Omega_a} g_0>0$ and $\min_{\Omega_a} g_\lambda \to -\infty$ as $\lambda\uparrow\lambda_1$. Therefore there exists $r \in (a,1) $ and $\lambda \in (0,\lambda_1)$ such that $g_\lambda(\zeta_1(r)) = 0$.
This implies that $ g_\lambda(\zeta_1(r))^2 - G_\lambda(\zeta_1(r), \zeta_j(r))^2<0$
for any $ j=2,\ldots, k$.
By \eqref{positivePair} this value of $\lambda$ is greater or equal than $\lambda_0$. It follows that $\lambda_0<\lambda_1$.

Since $\lambda_0<\lambda_1$ by continuity we deduce the validity of \eqref{sigma1}.
We also deduce from this and the way we have arranged the eigenvalues that $\sigma_j(\lambda_0,r)\geq 0$ for all $ j=2,\ldots, k$ and for all $r\in (a,1)$.

To continue the proof of the stated properties we need a formula for the eigenvalues of a circulant matrix. We recall that a matrix $A$ of $k\times k$ is circulant if it has the form
\[
A =
\left[
\begin{matrix}
a_0 & a_{k-1} & a_{k-2} & \ldots & a_{2} & a_{1}
\\
a_1 & a_0 & a_{k-1} &  \ldots & a_{3} & a_{2}
\\
a_2 & a_1 & a_0 &  \ldots & a_{4} & a_{3}
\\
\vdots & \vdots& \vdots& &\vdots & \vdots
\\
a_{k-1} & a_{k-2} & a_{k-3} & \ldots & a_{1} & a_{0}
\end{matrix}
\right]
\]
for some complex numbers $a_0,\ldots,a_{k-1}$.
(This means each column is obtained from the previous one by a rotation in the components).
We note that the matrix $\tilde M_\lambda(r)$ has this structure with
\begin{align*}
a_0 &= g_\lambda(\zeta_1(r)),
\\
a_j &= -G_\lambda( \zeta_1(r), \zeta_{j+1}(r)), \quad   j=1,\ldots, k-1 ,
\end{align*}
since $G_\lambda (\zeta_l(r), \zeta_j(r) ) = G_\lambda (\zeta_{l+1}(r), \zeta_{j+1}(r) ) $.

It is known that the eigenvalues $\nu_l$ ($l=0,\ldots, k-1$) of the circulant matrix $A$ are given by
\[
\nu_l = \sum_{j=0}^{k-1} a_j e^{\frac{2\pi i}{k} j l } , \quad  l=0,\ldots, k-1.
\]
These numbers coincide up to  relabeling the indices with the numbers $\tilde\sigma_j(\lambda,r)$.
We note that since $\tilde M_\lambda(r)$ is symmetric, the eigenvalues are real.
We claim that
\[
\nu_0 < \nu_j \quad  j=2,\ldots, k-1.
\]
Indeed, since the $\nu_l$ are real
\begin{align*}
\nu_l & = g_\lambda(\zeta_1(r)) - \sum_{j=1}^{k-1}  Re\left[ G_\lambda(\zeta_1(r), \zeta_{j+1}(r))   e^{\frac{2\pi i}{k} j l }\right]
\\
& >  g_\lambda(\zeta_1(r)) - \sum_{j=1}^{k-1}  G_\lambda(\zeta_1(r), \zeta_{j+1}(r))    = \nu_0,
\end{align*}
where the strict inequality holds because there are point  $e^{\frac{2\pi i}{k} j l }$ in the sum which are not colinear and $G_\lambda(\zeta_1(r), \zeta_{j+1}(r)) >0$.
This proves \eqref{simpleEigenvalue} and also that
\begin{align}
\label{formulaSigma1}
\tilde \sigma_1(\lambda,r) = g_\lambda(\zeta_1(r)) - \sum_{j=1}^{k-1}  G_\lambda(\zeta_1(r), \zeta_{j+1}(r))  ,
\end{align}
for all $\lambda \in [0,\lambda_0]$ because for this range of $\lambda$ we know that the eigenvalues $\tilde \sigma_j$ are nonnegative.
From this formula we obtain
\begin{align*}
\frac{\partial  \tilde \sigma_1}{\partial \lambda}(\lambda,r)
&=
\frac{\partial  g_\lambda}{\partial \lambda}
(\zeta_1(r)) - \sum_{j=1}^{k-1}
\frac{\partial  G_\lambda }{\partial \lambda} (\zeta_1(r), \zeta_{j+1}(r)) < 0
\end{align*}
for $\lambda \in [0,\lambda_0]$, which proves  \eqref{sigma1Decreasing}.

Let us see that we are almost in a situation where  Theorem~\ref{thm1} can be applied.
Let $r_0$ be the number found in property \eqref{sigma1}.
The eigenvalue $\tilde \sigma_1(\lambda_0,r_0)$ is zero and $\tilde M_{\lambda_0}(r_0)$ is positive semidefinite (assumption (i)), we have $D_\zeta \sigma_1(\lambda_1,\zeta(r_0))=0$ because $\zeta(r_0)$ is a global minimum for $\sigma_1(\lambda_0,\cdot)$. Condition (iv) follows from \eqref{sigma1Decreasing}.

The only hypothesis in Theorem~\ref{thm1} which has not been verified is the nondegeneracy of $\zeta(r_0)$ as a critical point of $\sigma_1(\lambda_0,\cdot)$. In fact this nondegeneracy does not hold because the problem is invariant about rotations about the $z$ (or $x_3$) axis.
We could impose a symmetry condition on the functions involved so that degeneracy by rotation is eliminated, but still we do not know whether we have nondegeneracy in the radial direction.
Instead of this assumption, we will see that a slight modification of the argument in the proof of Theorem~\ref{thm1}  yields the desired conclusion. Basically,  the nature of the critical point of $F_\lambda$ in this case is stable with respect to $C^1$ perturbations.

We recall from Section~\ref{secReduction} that to construct a solution it is sufficient to find a critical point of the function
$
\bar J_\lambda( \sum_{j=1}^k V_j + \phi)
$
and
\[
\bar J_\lambda\Bigl(\sum_{j=1}^k V_j + \phi\Bigr)
= J_\lambda\Bigl(\sum_{j=1}^k U_j \Bigr ) + o(\varepsilon^{2})
\]
where $o(\varepsilon^{2})$ is in $C^1$ norm.
Therefore  it is enough to ensure that $J_\lambda(\sum_{j=1}^k U_j ) $ has a critical point that is stable under $C^1$ perturbations.

In the case when $\Omega_a$ is an annulus, and $\zeta_j(r) = (re^{2\pi i\frac{j-1}{k}},0)$ using that $g_\lambda(\zeta_j(t))$ only depends on $r$ and considering $\mu = \mu_1 = \ldots = \mu_k $, by Lemma~\ref{lemmaEnergyExpansion} we have that
\[
J_\lambda\Bigl(\sum_{j=1}^k U_j  \Bigr) = F_\lambda(\mu,r) + R_\lambda(\mu,r),
\]
where
\[
F_\lambda(\mu,r) = k a_0 + 2 a_1 \mu f_\lambda(r) + k a_2 \lambda \mu^2 - a_3 \mu^2 f_\lambda(r)^2
\]
with
\begin{align*}
f_\lambda(r) &= k g_\lambda(\zeta_1(r)) - k \sum_{j=2}^k G_\lambda(\zeta_1(r), \zeta_j(r))
\end{align*}
and
\[
R_\lambda(\mu,r) = O(\mu^{3-\sigma}).
\]
for some $\sigma\in(0,1)$.

As was observed previously, for $\lambda \in [0,\lambda_0]$, $f_\lambda(r)$ is precisely the eigenvalue $\tilde \sigma_1(\lambda,r)$  (see  \eqref{formulaSigma1}).
Therefore \eqref{sigma1} gives $f_{\lambda_0}(r)\geq 0$ and then there exists $r_0\in (a,1)$ such that $f_{\lambda_0}(r_0)=0$.

Since we have \eqref{sigma1Decreasing} we deduce that for $\lambda = \lambda_0 +\varepsilon$ with $\varepsilon>0$ small enough and $r$ close to $r_0$,
we have $f_\lambda(r)<0$ and so the equation
\[
\frac{\partial}{\partial \mu}
F_\lambda(\mu,\zeta) = 0
\]
has a solution given explicitly by
\[
\mu_0(\lambda,r) = \frac{- a_1  f_\lambda(r)}{k a_2 \lambda - a_3 f_\lambda(r)^2} > 0.
\]
We consider this expression only for $r$ in a neighborhood of $r_0$, so that  $f_\lambda(r) \leq  0$.
Then
\[
\frac{\partial}{\partial \mu} ( F_\lambda(\mu,r) +R_\lambda(\mu,r) ) = 0
\]
has a solution $\mu(\lambda,r) $ close to $\mu_0(\lambda,r) $.
Note that since $\frac{\partial}{\partial \mu} R_\lambda(\mu,r) = O(\mu^{2-\sigma})$, we have
\[
|\mu(\lambda,r)- \mu_0(\lambda,r)|\leq C | f_\lambda(r)|^{2-\sigma}.
\]
Replacing $\mu(\lambda,r)$ in $F_\lambda$ we find
\[
F_\lambda( \mu(\lambda,r), r )  + R_\lambda( \mu(\lambda,r), r ) =
-\frac{a_1^2 f_\lambda(r)^2}{k a_2 \lambda-a_3 f_\lambda(r)} + O( | f_\lambda(r)|^{3-\sigma} ).
\]
From this formula, \eqref{sigma1Decreasing} and the property
\[
f_\lambda(r) \to \infty \quad \text{ as \, $r\to a$ or $r\to 1$},
\]
we get that $F_\lambda( \mu(\lambda,r), r )  + R_\lambda( \mu(\lambda,r), r )  $ has a critical point $r_\lambda$ for which $f_\lambda(r_\lambda)<0$.
\end{proof}

\begin{proof}[Proof of Proposition~\ref{propA2}]
The argument is the same as in Proposition~\ref{propA1}, except that for this result we claim that properties \eqref{positiveSigma1Lambda0} and \eqref{positivePairLambda0} hold for $a\in (\frac{1}{49},1)$.
In the case $k=2$ both properties actually follow from the following claim: if $a \in (\frac{1}{49},1)$ then
\begin{align}
\label{g1}
g_0(x) > G_0(x,-x) , \quad \forall x\in \Omega_a.
\end{align}
To prove this we  use an explicit formula for the Green function in the annulus $ \Omega_a$, which can be found in \cite{grossi-vujadinoic}, to obtain that:
\[
g_0(x) = \frac{1}{\omega_{2}}
\sum_{m=0}^\infty
P_m(x)
\quad \text{and}\quad
G_0(x,-x) = \frac{1}{\omega_{2}}\left[
\frac{1}{2\vert x\vert}-
\sum_{m=0}^\infty
(-1)^m P_m(x)\right],
\]
where
\[
P_m(x):=\frac{a^{2m+1}-2a^{2m+1} |x|^{2m+1} +|x|^{2(2m+1)} }{(2m+1) |x|^{2(m+1)}(1-a^{2m+1}) } .
\]
Notice that $P_m(x)$ is nonnegative for all $m\geq 0$, and therefore,
\begin{align*}
g_0(x) - G_0(x,-x) &= \frac{1}{\omega_{2}}
\left[-
\frac{1}{2\vert x\vert}+
\sum_{m=0}^\infty
[1+(-1)^m] \,P_m(x)\right]\\
&\geq
\frac{1}{\omega_{2}}
\left[-
\frac{1}{2\vert x\vert}+
2P_0(x)\right]\qquad \forall x\in \Omega_a.
\end{align*}
A sufficient condition to have \eqref{g1} is then
\[
4 \frac{a-2a|x|+|x|^2}{|x|^2(1-a)}> \frac{1}{|x|}, \quad \forall x\in\Omega_a.
\]
This in turn holds if $a\in(\frac{1}{49},1)$.
\end{proof}

\bigskip \noindent \textbf{Acknowledgement.}
The research of M. Musso has been partly supported by FONDECYT Grant 1160135 and Millennium Nucleus Center for Analysis of PDE, NC130017.
D. Salazar was partially funded by grant Hermes 35454 from Universidad Nacional de Colombia sede Medell\'\i n and
Millennium Nucleus Center for Analysis of PDE, NC130017.

\end{document}